\documentclass{article} 
\usepackage[usenames,dvipsnames]{color}
\usepackage{amsmath} 
\usepackage{amssymb} 
\usepackage{bm}
\usepackage{amsthm}
\usepackage{indentfirst}
\usepackage{amsfonts,mathrsfs}
\usepackage{hyperref}
\numberwithin{equation}{section}
\usepackage{geometry}
\usepackage{esint} 
\newtheorem{theorem}{Theorem}
\newtheorem{lemma}{Lemma}

\newtheorem{corollary}{Corollary}
\newtheorem{definition}{Definition}
\newtheorem{remark}{Remark}

\def\NN{\mathbb{N}}

\def\RR{\mathbb{R}}
\def\QQ{\mathbb{Q}}

\def\ZZ{\mathbb{Z}}

\def\CC{\mathbb{C}}

\def\SS{\mathbb{S}}

\def\mal{\max\limits}

\def\lt{\left}
\def\rt{\right}

\def\mI{\mathcal{I}}

\def\bomega{{\bm\omega}}
\def\balpha{{\bm\alpha}}
\def\bbeta{{\bm\beta}}
\def\bx{{\bm x}}
\def\bj{{\bm j}}

\title{Sharp Sobolev Approximation on General Domains by\\ Linearized Shallow Networks with Analytic Activations}
\author{Jia Li\thanks{Peking University, Beijing 100871, P. R. China}\and Tong Mao\thanks{Shenzhen Loop Area Institute, Shenzhen 518000, P. R. China}\and Jinchao Xu\thanks{King Abdullah University of Science and Technology, Thuwal 23955, Saudi Arabia}}
\date{}

\begin{document}
\maketitle

\begin{abstract}
We study Sobolev approximation on bounded domains by linearized shallow neural networks whose inner parameters are prescribed independently of the target function. Our main step is a one-dimensional construction for analytic activations. We prove that quasi-Chebyshev parameter sets with univariate resolution $m$ generate fixed feature spaces attaining the sharp $H^r$-to-$H^s$ approximation order $m^{-(r-s)}$ for a class of analytic activations satisfying a quantitative non-cancellation condition on their Taylor coefficients. Combining this result with the ridge-function lifting theorem in \cite{petrushev1998approximation} and its extension to arbitrary quasi-uniform direction sets established in this work, we construct tensor-product-type parameter sets that attain the sharp rate
$$\|f-f_n\|_{L^2(\Omega)}\lesssim n^{-\frac rd}\|f\|_{H^r(\Omega)},\quad f\in H^r(\Omega)$$
for all $r>0$. In contrast to the finite-difference construction in \cite{mhaskar1996neural}, whose explicit admissibility condition may require an extremely small parameter scale, the proposed parameter sets remain distributed over fixed intervals and are therefore more amenable to practical computation.
\end{abstract}

\section{Introduction}\label{sec:introduction}

Let $\sigma\colon\RR\to\RR$ be an activation function. A shallow neural network with width $n$ is a function of the form
\begin{equation}\label{eqn:shallow-network-class}
    \Sigma_{n}^\sigma(\Omega)
    =
    \left\{
        x\mapsto\sum_{j=1}^n a_j \sigma\big(w_j\cdot x + b_j\big)
        :
        w_j \in \mathbb R^d,\;
        a_j,b_j \in \mathbb R
    \right\}.
\end{equation}
Classical universal approximation theorems show that networks of this form are dense in broad classes of functions for sigmoidal or, more generally, nonpolynomial activations \cite{cybenko1989approximation,hornik1989multilayer,Hornik1991,leshno1993multilayer}; see also \cite{pinkus1999approximation,devore2021neural}. Quantitative approximation theory asks instead how the best error
\begin{equation*}
    \inf_{g\in\Sigma_n^\sigma}\|f-g\|_X
\end{equation*}
depends on $n$ for a prescribed target class and norm $X$. Fourier-moment conditions and their later function-space formulations give the dimension-independent Monte Carlo rate $O(n^{-1/2})$ for Barron- or variation-type classes \cite{barron1993universal,barron1994approximation,Jones1992,makovoz1996random,devore1996some,bach2017breaking,E2019barron,E2020representation,siegel2023characterization}. Sharp rates, metric entropy, and $n$-widths for several variation spaces have subsequently been developed in \cite{siegel2020approximation,siegel2022high,ma2022uniform,siegel2022sharp}. For classical smoothness classes, sufficiently smooth and nondegenerate activations attain the optimal Sobolev order $O(n^{-r/d})$ under the usual stability or continuity requirements on the approximation procedure \cite{mhaskar1993approximation,devore1989optimal,maiorov2000near}. Many best-approximation results permit the inner parameters to depend on the target. An important exception, revisited below, is the pre-fabricated construction in \cite{mhaskar1996neural}, in which only the outer coefficients depend on the target. Related estimates for ReLU and ReLU$^k$ activations appear in \cite{liu2024approximation,mao2023rates,mao2024approximation}.

The linearized problem fixes the inner parameters in advance. Given a target-independent set $\displaystyle\Theta_n=\Big\{\binom{w_j}{b_j}\Big\}_{j=1}^n,$ one considers the fixed linear space
\begin{equation}\label{eqn:linearized-network}
    \mathcal L_n(\Theta_n;\sigma)
    :=\operatorname{span}\{\sigma(w_j\cdot\circ+b_j):1\leq j\leq n\},
\end{equation}
and optimizes only the outer coefficients. A single space must therefore approximate the entire target class at the asserted order. For ReLU$^k$, the linearized networks satisfy
\begin{equation}\label{eqn:liu-linearized-rate}
    \inf_{g\in\mathcal L_n}\|f-g\|_{L^2(\Omega)}
    \lesssim
    n^{-\frac rd}
    \|f\|_{H^{r}(\Omega)},\quad r\leq\frac{d+2k+1}{2}
\end{equation}
with fixed inner parameters \cite{liu2025integral}. The homogeneity of ReLU$^k$ is important here: it permits radial scalings of the parameters to be separated from their directions and thereby relates constructions on the sphere to networks on Euclidean domains.

For nonhomogeneous activations, the corresponding picture is less direct. It is proved in \cite{mhaskar1999approximation} that zonal activations with suitable ultraspherical coefficients attain the sharp Sobolev order on the sphere when the directions are quasi-uniform. Thus some analytic activations can support optimal linearized approximation in the spherical setting. On a general domain, however, normalizing an affine parameter changes the activation itself, because the homogeneity used for ReLU$^k$ is no longer available. Consequently, the spherical result does not automatically yield a Euclidean-domain theorem for bounded activations such as $\tanh$.

The ridge-function lifting theorem in \cite{petrushev1998approximation} provides a different route from one dimension to several dimensions. Let $\Omega\subset\RR^d$ be a bounded domain in the class considered there, let $I\subset\RR$ be the corresponding fixed projection interval, and let $X_m\subset L^2(I)$ be an $m$-dimensional space satisfying
\begin{equation}\label{eqn:petrushev-univariate}
    \inf_{v\in X_m}\|u-v\|_{L^2(I)}
    \lesssim m^{-q}\|u\|_{H^q(I)}.
\end{equation}
For an particular family of cubature points $\Omega_m\subset\SS^{d-1}$ satisfying $|\Omega_m|\simeq m^{d-1}$, define
\begin{equation*}
    Y_m:=\operatorname{span}\{v(\omega\cdot\circ):v\in X_m,\ \omega\in\Omega_m\}.
\end{equation*}
While the original lifting argument is formulated using a particular family of cubature points, we show that these points can be replaced by any quasi-uniform family in the Appendix. The required positive cubature weights with polynomial exactness follow from \cite{narcowich2006localized}, while the high-frequency synthesis estimate follows from the separation and localization properties of the quasi-uniform set; related scattered-data quadrature constructions appear in \cite{leGiaMhaskar2009localized}. The resulting quasi-uniform lifting theorem gives $\dim Y_m\lesssim m^d$ and
\begin{equation}\label{eqn:petrushev-multivariate}
    \inf_{g\in Y_m}\|f-g\|_{L^2(\Omega)}
    \lesssim
    m^{-q-\frac{d-1}{2}}
    \|f\|_{H^{q+\frac{d-1}{2}}(\Omega)}.
\end{equation}
Thus, with $r=q+(d-1)/2$ and $n=m^d$, the space $Y_m$ contains at most order $n$ ridge features and \eqref{eqn:petrushev-multivariate} gives the optimal rate $m^{-r}=n^{-r/d}$. Although the direct lifting step requires $r>(d-1)/2$, real interpolation between this smoother endpoint estimate and the trivial $L^2$ estimate extends the same rate to every $r>0$. The multivariate construction is therefore reduced to a sharp one-dimensional Jackson estimate together with a product of arbitrary quasi-uniform directions and prescribed univariate parameters.

The purpose of this paper is to establish the required one-dimensional estimate for a family of analytic activations and, in particular, for $\tanh$. Let $I,I'\subset\RR$ be bounded intervals and let $\{x_j^{(m)}\}_{j=0}^m\subset I'$ be any quasi-Chebyshev family. For the target-independent spaces
\begin{equation*}
    X_m^{\tanh}
    :=\operatorname{span}\{\tanh(x-x_j^{(m)}):0\leq j\leq m\},
\end{equation*}
we prove, for $0\leq s\leq r$,
\begin{equation}\label{eqn:intro-main-result}
    \inf_{g\in X_m^{\tanh}}
    \|f-g\|_{H^s(I)}
    \lesssim
    m^{-(r-s)}\|f\|_{H^r(I)}.
\end{equation}

Taking the one-dimensional instances of these results as the spaces $X_m$ in \eqref{eqn:petrushev-univariate}, and applying the lifting theorem in \cite{petrushev1998approximation} in the quasi-uniform form proved in Appendix~\ref{app:quasiuniform-lifting}, gives the $d$-dimensional sharp rate
\begin{equation}\label{eqn_tan_rate_d}
    \lt\|f-\sum_{\theta\in\Omega_m}\sum_{j=0}^m a_{\theta j}\tanh\lt(\theta\cdot\circ-x_j^{(m)}\rt)\rt\|_{L^2(\Omega)}
    \lesssim m^{-r}\|f\|_{H^r(\Omega)}
    =n^{-\frac rd}\|f\|_{H^r(\Omega)},
    \qquad n=m^d.
\end{equation}
Here the inner parameter set is a product of arbitrary quasi-uniform directions $\Omega_m\subset\SS^{d-1}$, with $|\Omega_m|\simeq m^{d-1}$, and quasi-Chebyshev biases $\{x_j^{(m)}\}_{j=0}^{m}\subset I'$. Hence the number of displayed features is $(m+1)|\Omega_m|\asymp n$. Real interpolation covers the full range $r>0$. A similar argument applies to analytic activations satisfying the Taylor-coefficient condition stated in Theorem~\ref{thm_1d_analytic}, including sigmoid, some rational functions, and $\arctan$. For these activation functions, the parameter set takes a different form from \eqref{eqn_tan_rate_d}, but still relies on a product construction. These rates are sharp in terms of the number of features by the classical width lower bound for Sobolev balls \cite{devore1989optimal}.


A closely related pre-fabricated construction is given in \cite{mhaskar1996neural}. Related Sobolev-norm estimates based on the same finite-difference mechanism appear in \cite{park2026sobolev}. For $m\in\NN$, \cite{mhaskar1996neural} uses approximants of the form
\begin{equation}\label{eqn:mhaskar-approximant}
    f_n(\bx)=\sum_{\bj\in\{-m,\ldots,m\}^d}a_{\bj,n}(f)\,\sigma\bigl(b+h_n\bj\cdot\bx\bigr),
    \qquad
    n=(2m+1)^d\asymp m^d,
\end{equation}
where the inner parameters are prescribed independently of $f$. Under the nonvanishing-derivative assumptions of \cite[Theorem~2.1]{mhaskar1996neural}, a sufficient admissibility condition for approximation in $L^p$, $1\leq p\leq\infty$, is
\begin{equation}\label{eqn:mhaskar-scale}
    h_n\leq\min\left\{\frac{\delta}{3md},
    \min_{0\leq\mathbf{k}\leq2m}
    \left(
    \frac{m^{-r-\alpha}}
    {M_{\sigma;m,d}
    \sum_{0\leq\mathbf{p}\leq\mathbf{k}}
    |\sigma^{(|\mathbf{p}|)}(b)|^{-1}
    |\tau_{\mathbf{k},\mathbf{p}}|}
    \right)^{1/2}
    \right\},
\end{equation}
where $\alpha=d/\min(p,2)$, $M_{\sigma;m,d}$ is the finite-difference remainder constant in that construction, and $\tau_{\mathbf{k},\mathbf{p}}$ are the monomial coefficients of the corresponding tensor-product Chebyshev polynomial. The severity of this condition is already visible in one dimension. Suppose that $\sigma$ extends holomorphically to a neighborhood of $\overline{B_{R+\varepsilon}(b)}$, that $0<\delta<R$, and that $\sigma^{(k)}(b)\neq0$ for every $k\geq0$. Replacing the finite-difference remainder constant by the standard Cauchy majorant gives the explicit strengthened sufficient condition
\begin{equation}\label{eqn:mhaskar-factorial-scale}
    h_n
    \leq
    C m^{-\frac{r+\alpha+1}{2}}
    \frac{\min\{1,(R-\delta)^{m+1}\}}
    {\sqrt{(2m+2)!}}
    =
    \exp\bigl(-m\log m+O(m)\bigr).
\end{equation}
For $d=1$, the network has $n=2m+1$ features, and the diameter of the inner-weight set is at most $2mh_n$. Thus this explicit safe specialization of the finite-difference argument produces a factorially clustered feature family. This concerns the scale supplied by that construction, not a necessary condition for every representation attaining the same approximation order. In contrast, the quasi-Chebyshev parameters used here remain distributed over a fixed interval and have algebraic separation. The approximation exponent and the smooth activation class alone are therefore not new; the distinction pursued here is the prescribed quasi-Chebyshev feature geometry on fixed parameter intervals and its product-type lifting through arbitrary quasi-uniform directions.

The remainder of the paper is organized as follows. Section~\ref{sec:preliminaries} records the geometric and discrete-polynomial tools. Section~\ref{sec:main-results} states the one- and multidimensional approximation results and verifies the analytic non-cancellation condition for several activations. Section~\ref{sec:proofs} contains the proofs, Section~\ref{sec:conclusion} concludes the paper, and Appendix~\ref{app:quasiuniform-lifting} proves the quasi-uniform form of the ridge-function lifting theorem used above.

\section{Preliminaries}\label{sec:preliminaries}
This section fixes the notation and recalls the sampling results used below. We first introduce quasi-uniform and quasi-Chebyshev point families and then record the Marcinkiewicz--Zygmund inequalities needed for the construction of least-square interpolation operator.
 

We use the comparison notation of \cite{xu1992iterative}. The symbols $\gtrsim$, $\lesssim$, and $\simeq$ denote inequalities up to positive constants independent of the approximation parameter unless stated otherwise. Thus, when we write
$$
f(x)\gtrsim g(x),\quad g(x)\lesssim h(x),\quad h(x)\simeq k(x),
$$
there exist positive constants $c_1,c_2,c_3,c_4$, independent of $x$, such that
$$
f(x)\ge c_1 g(x),\quad g(x)\le c_2 h(x),\quad c_3 h(x)\le k(x)\le c_4 h(x).
$$
Throughout, $\NN=\{0,1,2,\ldots\}$ and $\mathbb N_+=\{1,2,\ldots\}$.

\begin{definition}[Quasi-uniform]\label{def_quasi_uniform}
    A set of points $\{x_j^{(m)}\}_{j=1}^m\subset [a,b]$ is said to be quasi-uniform if
    \begin{equation}\label{eqn:quasiuniform-1}
        \mal_{x\in[a,b]}\min\limits_{1\leq j\leq m}|x-x_j^{(m)}|\lesssim\min\limits_{i\neq j}|x_i^{(m)}-x_j^{(m)}|.
    \end{equation}

    A set of points $\{\theta_j^{(N)}\}_{j=1}^N\subset \SS^d$ is said to be quasi-uniform if
    \begin{equation}\label{eqn:quasiuniform-2}
        \mal_{\theta\in \SS^d}\min\limits_{1\leq j\leq N}\rho(\theta,\theta_j^{(N)})\lesssim\min\limits_{i\neq j}\rho(\theta_i^{(N)},\theta_j^{(N)}),
    \end{equation}
    where $\rho(\cdot,\cdot)$ denotes the geodesic distance on $\SS^d$.
    All hidden constants are independent of the cardinality parameter.
\end{definition}
\begin{definition}[Quasi-Chebyshev]\label{def_quasi-chebyshev}
    A set of points $\{x_j^{(m)}\}_{j=1}^m\subset [a,b]$ is said to be quasi-Chebyshev if there exists a set of quasi-uniform points $\{\theta_j^{(m)}\}_{j=1}^m$ on $[0,\pi]$ such that $$x_j^{(m)} = \frac{a+b}{2}+\frac{b-a}{2}\cos\theta_j^{(m)}.$$
\end{definition}

We now prove some elementary lemmas showing that the quasi-Chebyshev structure is stable under transformations that arise in the subsequent analysis.

\begin{lemma}\label{lem:transform-cheby}
    For any quasi-Chebyshev $\{x_j^{(m)}\}_{j=1}^m\subset [a,b]$ and any bi-Lipschitz map $f$, the transformed set $\{f(x_j^{(m)})\}_{j=1}^m\subset [\min (f(a),f(b)), \max (f(a),f(b)) ]$ is also quasi-Chebyshev.
\end{lemma}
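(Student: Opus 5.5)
We work with the angle parametrization directly. The plan is to show that the induced map on angles is bi-Lipschitz in the appropriate sense, and then to transfer both the separation and the covering conditions of Definition~\ref{def_quasi_uniform}.

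\textbf{Setup.} A bi-Lipschitz map $f$ on $[a,b]$ is continuous and injective, hence strictly monotone. It maps $[a,b]$ onto $[a',b']:=[\min(f(a),f(b)),\max(f(a),f(b))]$ and sends endpoints to endpoints. Write $x_j=\frac{a+b}2+\frac{b-a}2\cos\theta_j$ with $\{\theta_j\}$ quasi-uniform on $[0,\pi]$. Define $\varphi_j\in[0,\pi]$ by $f(x_j)=\frac{a'+b'}2+\frac{b'-a'}2\cos\varphi_j$. This gives a continuous bijection $g\colon[0,\pi]\to[0,\pi]$, $g(\theta_j)=\varphi_j$. By composing with $\varphi\mapsto\pi-\varphi$ if $f$ is decreasing, we may assume $g(0)=0$ and $g(\pi)=\pi$. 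It then suffices to prove the two-sided estimate
\begin{equation*}
|g(\theta)-g(\theta')|\simeq|\theta-\theta'|,\qquad \theta,\theta'\in[0,\pi].
\end{equation*}
Granting this, the minimal separation of $\{\varphi_j\}$ is comparable to that of $\{\theta_j\}$. For covering, any $\varphi\in[0,\pi]$ equals $g(\theta)$ for some $\theta$. Choosing $\theta_j$ nearest to $\theta$ gives $|\varphi-\varphi_j|\lesssim|\theta-\theta_j|$, which is bounded by the covering radius of $\{\theta_j\}$. Hence the covering radius of $\{\varphi_j\}$ is $\lesssim$ the separation of $\{\theta_j\}$, which is $\simeq$ the separation of $\{\varphi_j\}$. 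This is exactly quasi-uniformity.

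\textbf{Key metric estimate.} Let $e(\theta):=\min(\theta,\pi-\theta)$. From
\[
\cos\theta-\cos\theta'=2\sin\tfrac{\theta+\theta'}2\sin\tfrac{\theta'-\theta}2
\]
and $\sin\frac{\theta+\theta'}2\simeq\min(\theta+\theta',2\pi-\theta-\theta')$, a short case check gives
\[
\min(\theta+\theta',2\pi-\theta-\theta')\simeq |\theta-\theta'|+e(\theta)+e(\theta').
\]
(Both angles near $0$: the minimum is $\theta+\theta'$. Angles near opposite ends: both sides are $\simeq1$.) Therefore
\begin{equation*}
|x-x'|\simeq t\,(t+E),\qquad t=|\theta-\theta'|,\quad E=e(\theta)+e(\theta'),
\end{equation*}
with $x,x'$ the corresponding points of $[a,b]$. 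Taking $\theta'$ to be the nearer endpoint angle, the same estimate gives $\operatorname{dist}(x,\{a,b\})\simeq e(\theta)^2$.

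\textbf{Transfer through $f$.} Since $f$ is bi-Lipschitz and preserves the endpoint set, $|f(x)-f(x')|\simeq|x-x'|$ and $\operatorname{dist}(f(x),\{a',b'\})\simeq\operatorname{dist}(x,\{a,b\})$. Hence $e(g(\theta))\simeq e(\theta)$, so the quantity $E$ is preserved up to constants. The quantity $D:=t(t+E)$ is also preserved up to constants. Since $t\mapsto t(t+E)$ is increasing, its inverse satisfies $t\simeq\min(D/E,\sqrt D)$. This expression is stable under replacing $D,E$ by comparable quantities. Thus $|g(\theta)-g(\theta')|\simeq|\theta-\theta'|$, with constants depending only on the bi-Lipschitz constants of $f$ and the lengths $b-a$, $b'-a'$.

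\textbf{Main obstacle.} The step I expect to need the most care is the uniform two-sided estimate $|x-x'|\simeq t(t+E)$ near the endpoints, where $\arccos$ fails to be Lipschitz. The plan handles this by the case analysis on $\sin\frac{\theta+\theta'}2$ above, together with the observation that $f$ maps endpoints to endpoints, so that endpoint distances, and hence $e(\cdot)$, are preserved.
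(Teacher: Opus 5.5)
Your proof is correct. It reaches the same intermediate goal as the paper's proof, namely that the induced angle map $\arccos\circ\tilde f\circ\cos$ is bi-Lipschitz on $[0,\pi]$, but by a different route.

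The paper argues by calculus. It uses that $\tilde f$ is absolutely continuous with $c_f\le\tilde f'\le C_f$ a.e., and computes
$|(\arccos\circ\tilde f\circ\cos)'(\theta)|=|\sin\theta|\,|\tilde f'(\cos\theta)|/\sqrt{1-\tilde f(\cos\theta)^2}$.
It handles the endpoint singularity through $\tilde f(1)-\tilde f(\cos\theta)\ge c_f(1-\cos\theta)$ and then repeats the argument with $\tilde f^{-1}$ to get the inverse.

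You avoid differentiation entirely. Your ingredients are:
\begin{itemize}
\item the two-sided law $|\cos\theta-\cos\theta'|\simeq t(t+E)$. Your case check is fine; in fact it is the clean inequality $\tfrac12(t+E)\le\min(\theta+\theta',2\pi-\theta-\theta')\le 2(t+E)$.
\item endpoint preservation, which gives $e(g(\theta))\simeq e(\theta)$.
\item the stable inversion $t\simeq\min(D/E,\sqrt D)$.
\end{itemize}
Together these give both Lipschitz directions at once.

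The paper's argument is shorter. Yours is more elementary and more informative. Its by-products, $\operatorname{dist}(x,\{a,b\})\simeq e(\theta)^2$ and $t\le\sqrt{t(t+E)}$, are exactly the facts the paper uses without proof in Lemmas~\ref{lem:rescale-cheby} and~\ref{lem:limit-cheby}: endpoint spacing of order $m^{-2}$, and an $O(m^{-2})$ endpoint shift costing only $O(m^{-1})$ in angle.

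You also carry out the transfer of separation and fill distance under a bi-Lipschitz angle map, which the paper leaves as ``it suffices''. Your constants depend only on the bi-Lipschitz constant of $f$, since $(b'-a')/(b-a)$ is controlled by it. That uniformity is what is needed when the lemma is applied to the $m$-dependent maps in Lemma~\ref{lem:rescale-cheby}.
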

    \begin{proof}
        Without loss of generality, assume that $f$ is increasing. By Definition \ref{def_quasi-chebyshev}, it suffices to prove that the following function is bi-Lipschitz on $[0,\pi]$:
        \begin{equation}
            h(\theta) = \arccos\lt( \frac{f(\frac{a+b}{2}+\frac{b-a}{2}\cos\theta) - \frac{f(a) + f(b)}{2}}{\frac{f(b)-f(a)}{2}} \rt).
        \end{equation}
        If we denote $g(x) = \cos x, \ x\in[0,\pi]$ and \begin{equation}
            \begin{split}
                \tilde f\colon [-1,1]&\to [-1,1] \\
                x&\mapsto \frac{f(\frac{a+b}{2}+\frac{b-a}{2}x) - \frac{f(a) + f(b)}{2}}{\frac{f(b)-f(a)}{2}},
            \end{split}
        \end{equation}
        then we only need to prove that $g^{-1}\circ \tilde f\circ g$ is a bi-Lipschitz function on $[0,\pi]$. 

        Since $\tilde f, \tilde f^{-1}$ are Lipschitz, they are absolutely continuous and are differentiable almost everywhere, and there exist $c_f,C_f>0$ such that 
        $$0<c_f\leq \tilde f'(x)\leq C_f,\quad a.e.\ x\in[-1,1].$$
        Then for a.e. $\theta\in (0,\arccos\tilde f^{-1}(0)]$, 
        \begin{equation}
        \begin{split}
            |(g^{-1}\circ \tilde f\circ g)'(\theta)| &= \frac{|\sin\theta|}{\sqrt{1-\tilde f(\cos\theta)^2}}|\tilde f'(\cos\theta)| \\
            &\leq \frac{C_f|\theta|}{\sqrt{1+\tilde f(\cos\theta)}\sqrt{\tilde f(1)-\tilde f(\cos\theta)}} \\
            &\leq \frac{C_f|\theta|}{\sqrt{c_f}\sqrt{1-\cos\theta}} 
            \leq \frac{C_f\sqrt{12}}{\sqrt{c_f}}
        \end{split}
        \end{equation}
        which implies that $(g^{-1}\circ \tilde f\circ g)'(\theta)$ is bounded almost everywhere near $\theta=0$. The same argument applies near $\theta=\pi$. On every compact subinterval of $(0,\pi)$ the composite is absolutely continuous, and the preceding derivative bound is uniform as the subinterval approaches either endpoint. Therefore $g^{-1}\circ \tilde f\circ g$ is Lipschitz on $[0,\pi]$. Since $(g^{-1}\circ \tilde f\circ g)^{-1} = g^{-1}\circ \tilde f^{-1}\circ g$, its inverse is Lipschitz by the same argument, and the proof is complete.
    \end{proof}
    
\begin{lemma}\label{lem:rescale-cheby}
    For any quasi-Chebyshev $\{x_j^{(m)}\}_{j=0}^m\subset [a,b]$, suppose $x_0^{(m)}<x_1^{(m)}<\cdots <x_m^{(m)}$, then the set of points $$ \lt\{y_j^{(m)} = \frac{x_j^{(m)}-x_0^{(m)}}{b-x_0^{(m)}}\rt\}_{j=0}^m\subset [0,1]$$
    is quasi-Chebyshev.
\end{lemma}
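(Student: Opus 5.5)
The rescaled set is the image of the original one under an affine map onto an interval whose left endpoint has been moved from $a$ to $x_0^{(m)}$. Lemma~\ref{lem:transform-cheby} only applies when the target interval is the image of the full interval $[a,b]$. So the natural plan has two steps: first show that $\{x_j^{(m)}\}_{j=0}^m$ is itself quasi-Chebyshev on the shrunken interval $[x_0^{(m)},b]$, and then apply Lemma~\ref{lem:transform-cheby} with the increasing affine (hence bi-Lipschitz) map $x\mapsto (x-x_0^{(m)})/(b-x_0^{(m)})$, which sends $[x_0^{(m)},b]$ onto $[0,1]$.

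For the first step, write $x_j=\frac{a+b}{2}+\frac{b-a}{2}\cos\theta_j$ with $\{\theta_j\}$ quasi-uniform on $[0,\pi]$. Since cosine is decreasing, $x_0$ corresponds to the largest angle $\theta_0=\theta_{\max}$. Quasi-uniformity gives $\pi-\theta_{\max}\lesssim \delta$, where $\delta=\min_{i\neq j}|\theta_i-\theta_j|$ is the separation. It also gives $\delta\lesssim 1/m$, because $m+1$ points separated by $\delta$ fit in $[0,\pi]$.

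Now consider $T(\theta)=\arccos\bigl(\frac{2(\frac{a+b}{2}+\frac{b-a}{2}\cos\theta)-x_0-b}{b-x_0}\bigr)$, which maps $[0,\theta_{\max}]$ onto $[0,\pi]$. The new angles are $\phi_j=T(\theta_j)$, and I need to show they are quasi-uniform on $[0,\pi]$. It suffices that $T$ is bi-Lipschitz with constants independent of $m$. Then separation is preserved up to constants, coverage of $[0,\theta_{\max}]$ transfers to coverage of $[0,\pi]$, and the points remain quasi-uniform.

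Writing $u=\cos\theta$, $T$ is $\arccos\circ L\circ\cos$, where $L$ is the affine map sending $[\cos\theta_{\max},1]$ onto $[-1,1]$. Its slope is $2/(1-\cos\theta_{\max})$, which lies in $[1,2/(1-\cos(\pi-C\delta))]$ and so is bounded uniformly. Near $\theta=0$ the Lipschitz bound in both directions then follows exactly as in the derivative computation of Lemma~\ref{lem:transform-cheby}.

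The main obstacle is the endpoint $\theta_{\max}$. There $L$ is an affine map whose image endpoint $-1$ comes from $\cos\theta_{\max}>-1$, not from $-1$ itself. So the square-root behaviour of $\arccos$ near $-1$ is not matched by a corresponding square-root behaviour of $\cos$ at $\theta_{\max}$, and $T'$ near $\theta_{\max}$ behaves like $\sin\theta/\sqrt{\theta_{\max}-\theta}$ up to scaling. Because $\sin\theta_{\max}\simeq\pi-\theta_{\max}\lesssim\delta$, a global bi-Lipschitz bound may fail. I therefore plan to handle the region $\theta\in[\theta_{\max}-K\delta,\theta_{\max}]$ directly rather than through derivatives.

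In that region there are only $O(1)$ points $\theta_j$. Their images $\phi_j$ lie within $O(\sqrt{\delta\cdot\delta}/\ldots)$, that is $O(\delta)$, of $\pi$; this follows from the explicit formula, since $1+L(\cos\theta)\simeq(\theta_{\max}-\theta)(\pi-\theta_{\max}+\theta_{\max}-\theta)/(\pi-\theta_{\max})^2\cdot O(1)$ scaled appropriately. Their mutual distances stay $\gtrsim\delta$ by a direct estimate of $\arccos$ differences. Away from this region, $T$ is bi-Lipschitz with uniform constants, since $\sin\theta\simeq$ the distance to $\pi$, which is comparable to the distance to $\theta_{\max}$. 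Combining the two regimes gives separation $\gtrsim\delta$ and covering radius $\lesssim\delta$ for $\{\phi_j\}$. This proves quasi-uniformity, and Lemma~\ref{lem:transform-cheby} then completes the proof.
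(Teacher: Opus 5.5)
Your proposal is correct and uses the same decomposition as the paper: an affine rescaling, plus the fact that $x_0^{(m)}-a\lesssim m^{-2}$, i.e.\ the left endpoint moves by about one extreme angular spacing. You just do the two steps in the opposite order; the paper first applies Lemma~\ref{lem:transform-cheby} to the affine map $f_m$ on $[a,b]$, then asserts in one line that passing from $[f_m(a),1]$ to $[0,1]$ preserves quasi-uniformity. Your observation that the angular reparametrization has a square-root singularity at the moved endpoint, and so is not bi-Lipschitz there, together with your separate treatment of the nearby nodes, supplies exactly the justification that the paper's one line leaves implicit.

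One slip: your heuristic for $1+L(\cos\theta)$ should carry no $(\pi-\theta_{\max})^{-2}$ factor. With $s=\theta_{\max}-\theta$ and $\epsilon=\pi-\theta_{\max}$ one has $1+L(\cos\theta)\simeq s(s+\epsilon)$, hence $\pi-T(\theta)\simeq\sqrt{s(s+\epsilon)}$ and $T'(\theta)\simeq\sqrt{(s+\epsilon)/s}$. So $T$ is uniformly bi-Lipschitz on $[0,\theta_{\max}-\delta]$, which contains every $\theta_j$ with $j\geq1$, and only the single pair $(\theta_0,\theta_1)$ needs the direct estimate.
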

\begin{proof}
    Take $f_m(x)=\frac{x-x_0^{(m)}}{b-x_0^{(m)}}$ in Lemma \ref{lem:transform-cheby}. Since $\{x_j^{(m)}\}_{j=0}^m$ is quasi-Chebyshev, $x_0^{(m)}-a\lesssim m^{-2}$. Consequently, the Lipschitz constants of $f_m$ and $f_m^{-1}$ are bounded uniformly in $m$, and Lemma \ref{lem:transform-cheby} first gives a quasi-Chebyshev family on $[f_m(a),1]$. The left endpoint $f_m(a)$ differs from $0$ by $O(m^{-2})$, the scale of the extreme Chebyshev spacing. Passing from $[f_m(a),1]$ to $[0,1]$ therefore preserves the comparability of the angular fill distance and separation, which proves the claim.
\end{proof}

\begin{lemma}\label{lem:limit-cheby}
    Let $\{x_j^{(m)}\}_{j=0}^m\subset [0,1]$ be quasi-Chebyshev and let $\{\Delta_m\}_{m=1}^\infty$ be a positive sequence such that
    $$0<\Delta_m\leq\Delta,\qquad |\Delta_m-\Delta|\lesssim m^{-2},$$
    for some $\Delta>0$. Then the set of points
    $$ \{\Delta_m x_j^{(m)}\}_{j=0}^m \subset [0,\Delta]$$
    is quasi-Chebyshev. The same conclusion holds after removing either left or right endpoint.
\end{lemma}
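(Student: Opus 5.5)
The plan is to work directly with the angular representation given by Definition~\ref{def_quasi-chebyshev}. Write $x_j^{(m)}=\tfrac12+\tfrac12\cos\theta_j^{(m)}$ with $\{\theta_j^{(m)}\}$ quasi-uniform on $[0,\pi]$, and let $\phi_j^{(m)}:=\arccos\bigl(\tfrac{2}{\Delta}\Delta_m x_j^{(m)}-1\bigr)\in[0,\pi]$ be the angles of the scaled points relative to $[0,\Delta]$. Since $\Delta_m\le\Delta$, the scaled points do lie in $[0,\Delta]$. The claim is then that $\{\phi_j^{(m)}\}$ is quasi-uniform on $[0,\pi]$ with constants independent of $m$.

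The first step splits the map $\theta\mapsto\phi$ as $\phi=\arccos\bigl(\lambda_m(1+\cos\theta)-1\bigr)$ with $\lambda_m=\Delta_m/\Delta\in(0,1]$ and $1-\lambda_m\lesssim m^{-2}$. This is the composition $g^{-1}\circ\tilde f_m\circ g$ of Lemma~\ref{lem:transform-cheby}, where $\tilde f_m(x)=\lambda_m(x+1)-1$ maps $[-1,1]$ onto $[-1,2\lambda_m-1]$, not onto $[-1,1]$. Because $\tilde f_m(-1)=-1$, the proof of Lemma~\ref{lem:transform-cheby} applies verbatim near $\theta=\pi$ and gives a uniformly bounded derivative there. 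In the bulk and near $\theta=0$, the image endpoint satisfies $2\lambda_m-1=1-O(m^{-2})$. The estimate
\[
|\phi'|=\frac{\lambda_m\sin\theta}{\sqrt{1-\tilde f_m(\cos\theta)^2}}\le\frac{\sin\theta}{\sqrt{1-\cos\theta}\,\sqrt{1+\tilde f_m(\cos\theta)}}
\]
uses only $1-\tilde f_m(\cos\theta)\ge 1-\cos\theta$, which holds since $\lambda_m\le1$. So $\phi$ is Lipschitz uniformly in $m$. For the inverse direction, I would argue as in Lemma~\ref{lem:rescale-cheby}. The map $\phi$ sends $[0,\pi]$ bi-Lipschitzly onto $[\phi(0),\pi]$ with $\phi(0)=\arccos(2\lambda_m-1)\simeq\sqrt{1-\lambda_m}\lesssim m^{-1}$, and away from $\theta=0$ the inverse derivative is bounded. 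Near $\theta=0$, I would verify directly that $\phi^2\simeq\theta^2+\phi(0)^2$, using $1-\cos\phi\simeq(1-\cos\theta)+(1-\lambda_m)$.

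The second step checks quasi-uniformity directly instead of via bi-Lipschitz. Separation of the $\theta_j$ is $\simeq m^{-1}$, and the fill distance is $\lesssim m^{-1}$. From $\phi^2\simeq\theta^2+\phi(0)^2$ with $\phi(0)\lesssim m^{-1}$, distances between angles of size $\gtrsim m^{-1}$ are distorted only by bounded factors. Near $0$ the distortion is also controlled, because $|\phi(\theta)-\phi(\theta')|\gtrsim|\theta^2-\theta'^2|/m^{-1}\gtrsim|\theta-\theta'|$ when both angles are $\lesssim m^{-1}$. Hence separation remains $\gtrsim m^{-1}$. The gap $[0,\phi(0)]$ left uncovered has length $O(m^{-1})$, so the fill distance on $[0,\pi]$ stays $\lesssim m^{-1}$. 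This gives quasi-uniformity, which is exactly the $O(m^{-2})$-endpoint-shift reasoning of Lemma~\ref{lem:rescale-cheby}.

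The final step removes an endpoint. Deleting one extreme point $\phi_0$ or $\phi_m$ leaves the separation unchanged or larger. It increases the fill distance by at most one adjacent gap, which is $\lesssim m^{-1}$ because consecutive angles are separated by $\lesssim$ twice the fill distance, so the set remains quasi-uniform. The main obstacle is the endpoint $\theta=0$, where the Lipschitz bound on the inverse map degenerates. The comparison $\phi^2\simeq\theta^2+\phi(0)^2$ has to be carried out carefully there, and I would handle it by explicit Taylor bounds on $1-\cos$.
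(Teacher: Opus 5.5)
Your proposal is correct and follows the paper's route: the cosine parametrization, an $O(m^{-2})$ endpoint shift turning into an $O(m^{-1})$ angular effect, and one extra adjacent gap when an endpoint is removed. Your version is more careful than the paper's sketch, since the exact identity $1-\cos\phi=\lambda_m(1-\cos\theta)+2(1-\lambda_m)$ is what actually secures the separation bound, which a bare uniform $O(m^{-1})$ displacement estimate would not give; one small correction is that $\phi$ is not bi-Lipschitz onto $[\phi(0),\pi]$ when $\lambda_m<1$ (because $\phi'(0)=0$), but you never use this, as your direct check near $\theta=0$ replaces it.
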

\begin{proof}
    On the moving interval $[0,\Delta_m]$, the conclusion follows from affine invariance, uniformly in $m$. The right endpoint differs from $\Delta$ by $O(m^{-2})$. Under the cosine parametrization, an $O(m^{-2})$ endpoint displacement produces an $O(m^{-1})$ angular displacement, which is comparable with both the angular fill distance and separation of a quasi-uniform family. The same two quantities therefore remain comparable on the fixed interval $[0,\Delta]$. Removing left or roght endpoint changes the endpoint fill distance by at most one adjacent angular spacing and does not destroy quasi-uniformity.
\end{proof}

\subsection{Properties of the polynomials with scattered points}\label{subsec:prop_scatter}
In this subsection, we derive the Marcinkiewicz-Zygmund inequality for cosine polynomials sampled at quasi-uniform points on an interval, which is used in interpolation error analysis in Lemma \ref{lem:least-square-interp}. These inequalities can be viewed as weighted discrete $L^2$-norm equivalences, and follow from the fact that scattered point sets on $\mathbb S^d$ admit positive quadrature formulas with polynomial exactness; see, for instance, \cite{mhaskar2001spherical,bondarenko2013optimal,jetter2023norming}.
 
Consider a finite subset $\{\theta_j^*\}_{j=1}^N\subset \mathbb{S}^{d}$ comprising distinct, scattered points. The mesh norm for $\{\theta_j^*\}_{j=1}^N\subset \mathbb{S}^{d}$ is defined by

\begin{equation}
h=\mal_{\eta\in\SS^d}\min\limits_{1\leq j\leq N}\rho(\eta,\theta_j^*).
\end{equation}

We have Marcinkiewicz-Zygmund inequality for polynomials on sphere: 
\begin{lemma}[{\cite[Theorem 3.1]{mhaskar2001spherical}}]\label{lem:MZ-inequ}
    Given scattered points $\{\theta_j^*\}_{j=1}^N \subset \SS^d$ with mesh norm $h$, there exist nonnegative weights $\tau_1,\dots,\tau_N$ with $\tau_j\lesssim h^d$ and a constant $C_1$ (independent of $N, h$) such that
    \begin{equation}\label{eqn:MZ-inequ}
        \sum_{j=1}^N\tau_j|p(\theta_j^*)|^2 \simeq \|p\|_{L^2(\SS^d)}^2, \quad \forall p\in\mathbb{P}_{J}(\SS^d),
    \end{equation}
    where $J=\lfloor C_1h^{-1}\rfloor$.
\end{lemma}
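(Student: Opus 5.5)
The plan is the classical route through a positive quadrature rule that is exact for polynomials of degree about $2J$; the Marcinkiewicz--Zygmund equivalence then follows by comparing the discrete sum for $|p|^2$ with its integral. First I partition $\SS^d$ into $N$ regions $R_j$ (for instance a Voronoi-type partition), with $\theta_j^*\in R_j$, $\operatorname{diam} R_j\lesssim h$ and $|R_j|\lesssim h^d$. The $\lesssim$ bound holds because each $R_j$ lies in the geodesic ball of radius $h$ around $\theta_j^*$.

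Next I take $\tau_j=|R_j|$ and estimate the discretization error for an arbitrary $q\in\mathbb P_{2J}(\SS^d)$:
\begin{equation*}
\Bigl|\int_{\SS^d}q-\sum_j\tau_j q(\theta_j^*)\Bigr|\le\sum_j\int_{R_j}|q(\eta)-q(\theta_j^*)|\,d\eta\lesssim h\int_{\SS^d}\sup_{\rho(\xi,\eta)\le h}|\nabla q(\xi)|\,d\eta .
\end{equation*}
I then apply a Bernstein inequality combined with a local maximal-function (Nikolskii-type) bound for spherical polynomials of degree $2J$. When $hJ\le C_1$, this gives $\int\sup_{\rho(\xi,\eta)\le h}|\nabla q(\xi)|\,d\eta\lesssim J\|q\|_{L^1}$. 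So the error is at most $C hJ\|q\|_{L^1}\le CC_1\|q\|_{L^1}$.

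Now I specialize to $q=|p|^2$ with $p\in\mathbb P_J$, so that $q\in\mathbb P_{2J}$ and $\|q\|_{L^1}=\|p\|_{L^2}^2$. The previous step yields
\begin{equation*}
\Bigl|\sum_j\tau_j|p(\theta_j^*)|^2-\|p\|_{L^2}^2\Bigr|\le CC_1\|p\|_{L^2}^2 .
\end{equation*}
Choosing $C_1$ small enough that $CC_1\le 1/2$ gives the two-sided equivalence in \eqref{eqn:MZ-inequ}.

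The main obstacle is the maximal-function estimate in the second step, which controls oscillation over caps of radius $h$ uniformly for degree-$J$ polynomials. I would prove it using the localized reproducing kernels for $\mathbb P_{2J}$ from the spherical needlet/Mhaskar theory. These kernels decay like $J^d(1+J\rho)^{-k}$, and this decay yields both the Bernstein bound and the local sup bound. Since the statement is cited directly as \cite[Theorem 3.1]{mhaskar2001spherical}, in the paper I would simply invoke that result rather than reprove it.
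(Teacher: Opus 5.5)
Your proposal is correct and matches the paper, which gives no proof of its own and simply invokes \cite[Theorem~3.1]{mhaskar2001spherical}; your sketch is essentially the partition-plus-local-oscillation argument behind that reference, using cell-volume weights $\tau_j=|R_j|$ and an $L^1$ oscillation bound for spherical polynomials of degree $\lesssim h^{-1}$, applied to $|p|^2\in\mathbb P_{2J}(\SS^d)$. Two minor framing corrections: you construct only an approximate quadrature with relative error at most $CC_1$ (exactness, mentioned in your opening sentence, is neither obtained nor needed), and the gradient step needs derivative bounds on the localized kernel, or a separate $L^1$ Bernstein inequality, not just the kernel's decay.
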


By suitably reflecting quasi-uniform points $\{\theta_j\}_{j=1}^m\subset [0,\pi]$ to $\SS^1$, we obtain the following Marcinkiewicz--Zygmund inequality for cosine polynomials.
\begin{lemma}[Marcinkiewicz-Zygmund inequality]\label{lem:MZ-inequ-0-pi}
    Given quasi-uniform points $\{\theta_j\}_{j=1}^m \subset [0,\pi]$, there exist nonnegative weights $\tau_1,\dots,\tau_m$ with $\tau_j\lesssim m^{-1}$ and a constant $C_2$ (independent of $m$) such that
    \begin{equation}\label{eqn:MZ-inequ-0-pi}
        \sum_{j=1}^m\tau_j|p(\theta_j)|^2 \simeq \|p\|_{L^2([0,\pi])}^2, \quad \forall p\in\mathcal{C}_{J},
    \end{equation}
    where $J=\lfloor C_2 m\rfloor$, $\mathcal{C}_{J}={\rm span}\{\cos k\theta:0\leq k\leq J\}$.
\end{lemma}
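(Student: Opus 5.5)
The plan is to derive Lemma~\ref{lem:MZ-inequ-0-pi} from Lemma~\ref{lem:MZ-inequ} with $d=1$ by an even reflection. Write $\SS^1$ as $\RR/2\pi\ZZ$ with geodesic distance $\rho(\alpha,\beta)=\min_k|\alpha-\beta-2\pi k|$. Given quasi-uniform $\{\theta_j\}_{j=1}^m\subset[0,\pi]$, form the symmetric set
$$\Theta=\{\theta_j\}_{j=1}^m\cup\{-\theta_j\}_{j=1}^m\subset\SS^1,$$
and discard duplicates when $\theta_j\in\{0,\pi\}$, so $N\le 2m$.

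\emph{Mesh norm.} For $\eta\in\SS^1$, the reflected angle $|\eta|\in[0,\pi]$ satisfies $\rho(\eta,\pm\theta_j)\le|\,|\eta|-\theta_j|$ for the appropriate sign. Hence $h\le\max_{x\in[0,\pi]}\min_j|x-\theta_j|$. By quasi-uniformity, $m$ points in $[0,\pi]$ with fill distance $\delta$ and separation $\gtrsim\delta$ must have $\delta\simeq m^{-1}$: separation gives $m\delta\lesssim\pi$, and covering gives $m\cdot 2\delta\ge\pi$. Therefore $h\lesssim m^{-1}$, and also $h\gtrsim m^{-1}$.

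\emph{Application of Lemma~\ref{lem:MZ-inequ}.} It provides weights $\tau^*_\theta\lesssim h\lesssim m^{-1}$ on $\Theta$ and a degree $J^*=\lfloor C_1h^{-1}\rfloor\ge\lfloor C_2 m\rfloor$ for a suitable $C_2$, since $h^{-1}\gtrsim m$. Let $p\in\mathcal C_J$ with $J=\lfloor C_2m\rfloor\le J^*$. Then $p$, as a function of $\theta\in\SS^1$, is a trigonometric polynomial of degree $\le J^*$, i.e.\ lies in $\mathbb P_{J^*}(\SS^1)$. It is even, so
$$\|p\|_{L^2(\SS^1)}^2=2\|p\|_{L^2([0,\pi])}^2 \quad\text{and}\quad p(-\theta_j)=p(\theta_j).$$
Define $\tau_j=\tau^*_{\theta_j}+\tau^*_{-\theta_j}$, counting $\tau^*_{\theta_j}$ once when $\theta_j\in\{0,\pi\}$. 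These weights are nonnegative, satisfy $\tau_j\lesssim m^{-1}$, and give
$$\sum_j\tau_j|p(\theta_j)|^2=\sum_{\theta\in\Theta}\tau^*_\theta|p(\theta)|^2\simeq\|p\|_{L^2(\SS^1)}^2=2\|p\|_{L^2([0,\pi])}^2.$$
This is exactly \eqref{eqn:MZ-inequ-0-pi}.

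\emph{Main obstacle.} The only delicate point is matching the degree condition with constants independent of $m$. This requires the two-sided relation $h\simeq m^{-1}$, which comes from quasi-uniformity, so that $C_2$ can be chosen uniformly. A secondary point is that reflecting points near $0$ or $\pi$ may create near-coincident points. This is harmless because Lemma~\ref{lem:MZ-inequ} needs only the mesh norm and no separation. If exact duplicates are disallowed there, one simply keeps a single copy, as above.
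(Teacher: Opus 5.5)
Your proof is correct and follows the paper's route: reflect the nodes evenly onto $\SS^1$, apply Lemma~\ref{lem:MZ-inequ} with $d=1$ using $h\lesssim m^{-1}$, and fold the weights via $p(-\theta)=p(\theta)$. The only difference is that the paper reflects just $\theta_2,\dots,\theta_{m-1}$. This avoids both the duplicates at $0,\pi$ and the near-coincident pairs $\pm\theta_1$, $\pm\theta_m$, so the extended set stays separated. You instead reflect every node and rely, legitimately given how Lemma~\ref{lem:MZ-inequ} is stated, on the mesh norm alone.
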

\begin{proof}
    It suffices to consider $m\geq2$, since the remaining finite case can be absorbed into the constants.
    For any quasi-uniform points $\{\theta_j\}_{j=1}^m \subset [0,\pi]$, suppose $\theta_1<\theta_2<\cdots <\theta_m$, we can extend them to quasi-uniform points $\{\theta_j'\}_{j=1}^{2m-2} \subset \SS^1$ with mesh norm $h \simeq m^{-1}$ in the following way:
    \begin{equation}\label{eqn:point-extension}
        \theta_j' = \left\{
            \begin{aligned}
                &\theta_j, &&\quad j=1,2,\ldots,m, \\
                &-\theta_{j-m+1}, &&\quad j=m+1,m+2,\ldots,2m-2.
            \end{aligned}\right.
        \end{equation}
    Therefore, by Lemma \ref{lem:MZ-inequ} with $d=1$, there exist nonnegative weights $\tilde\tau_1,\dots,\tilde\tau_{2m-2}$ with $\tilde\tau_j\lesssim h$ and a constant $C_1$ (independent of $m$ and $h$) such that
    \begin{equation}\label{eqn:MZ-inequ-periodic}
        \sum_{j=1}^{2m-2}\tilde\tau_j|p(\theta_j')|^2 \simeq \|p\|_{L^2((-\pi,\pi])}^2, \quad \forall p\in\mathcal{T}_{J},
    \end{equation}
    where $J=\lfloor C_1h^{-1}\rfloor$ and $\mathcal{T}_{J}$ is the set of trigonometric polynomials of degree at most $J$. Restricting to the even functions $p\in\mathcal{C}_J$ and defining
    $$\tau_j = \left\{
    \begin{aligned}
        &\tilde\tau_j, &&\quad j=1,m, \\
        &\tilde\tau_j + \tilde\tau_{m-1+j}, &&\quad j=2,3,\ldots,m-1,
    \end{aligned}\right.$$
    equation \eqref{eqn:MZ-inequ-periodic} yields
    \begin{equation}
        \sum_{j=1}^{m}\tau_j|p(\theta_j)|^2 \simeq \|p\|_{L^2((-\pi,\pi])}^2 \simeq \|p\|_{L^2([0,\pi])}^2, \quad \forall p\in\mathcal{C}_{J}.
    \end{equation}
\end{proof}

\section{Main results}\label{sec:main-results}
We first state a direct tensor-grid result for analytic activations satisfying a quantitative non-cancellation condition at the origin. The theorem applies on bounded domains that admit a bounded Sobolev-extension operator and uses a Cartesian product of quasi-Chebyshev weight sets. 

\begin{theorem}\label{thm_1d_analytic}
    Let $\sigma$ be a function analytic in an bounded open complex domain $\Omega_\sigma\subset\CC$. Suppose there exists ${\rho_\sigma}>0$ such that
    \begin{equation}\label{eqn:derivative-bound}
        \frac{1}{k!}\lt|\sigma^{(k)}(0)\rt|\geq C \rho_\sigma^{-k},\quad\forall k\in\NN.
    \end{equation}
    Let $\Omega\subset\RR^d$ be a bounded domain admitting a bounded Sobolev-extension operator, and let $I^d=\prod_{k=1}^d I_k$ be a bounded rectangle such that $\Omega\Subset I^d$. For any $m\in\NN_+$, $f\in H^r(\Omega)$, any bounded rectangle $I'^d=\prod_{k=1}^d I_k'\subset \RR^d$, and any quasi-Chebyshev sets $\{\omega_{k,i}^{(m)}\}_{i=1}^m\subset I'_k$ satisfying 
    \begin{equation}\label{eqn:param-bound}
        {\bm 0}\in I'^d,\qquad \lt\{{\bm\omega}\cdot {\bm x}: \forall{\bm\omega}\in I'^d,{\bm x}\in I^d \rt\}\Subset \Omega_\sigma,
    \end{equation}
    set $n=m^d$. Then there exists $a\in\RR^n$ such that for any $0\leq s\leq r$,
    \begin{equation}\label{eqn:1d-analytic-approx}
        \lt\|f - \sum_{1\leq j_1,\ldots,j_d\leq m} a_\bj \sigma(\bomega_\bj^{(m)}\cdot\circ)\rt\|_{H^s(\Omega)}\lesssim m^{-(r-s)} \|f\|_{H^r(\Omega)} = n^{-\frac{r-s}{d}} \|f\|_{H^r(\Omega)},
    \end{equation}
    where $\bj = (j_1,\ldots,j_d)$, $a_\bj = a_{j_1,\ldots,j_d}$, $\bomega_\bj^{(m)} = \lt(\omega_{1,j_1}^{(m)},\ldots,\omega_{d,j_d}^{(m)}\rt)$.
\end{theorem}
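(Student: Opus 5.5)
The plan is to reduce the statement to an exact moment-matching problem on the tensor grid of inner weights, using the power series of $\sigma$. After extending $f$ by the bounded Sobolev-extension operator to a function $Ef\in H^r(I^d)$, I would approximate $Ef$ by a tensor-product polynomial $p_N$ of coordinate degree at most $N=\lfloor c\,m\rfloor$, with $c$ small. I would use the standard simultaneous (Legendre/Chebyshev) approximation on the rectangle:
$$\|Ef-p_N\|_{H^s(I^d)}\lesssim N^{-(r-s)}\|f\|_{H^r(\Omega)},\qquad 0\le s\le r,$$
where the $s$-range comes from approximating in $H^r$ and interpolating. It then remains to represent $p_N$, up to an error of order $m^{-(r-s)}\|f\|_{H^r}$ in $H^s(I^d)$, by $\sum_\bj a_\bj\sigma(\bomega_\bj\cdot\circ)$. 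By \eqref{eqn:param-bound} and analyticity, with $c_k=\sigma^{(k)}(0)/k!$,
\begin{equation*}
\sum_\bj a_\bj\sigma(\bomega_\bj\cdot\bx)=\sum_{\balpha\in\NN^d}c_{|\balpha|}\frac{|\balpha|!}{\balpha!}\Big(\sum_\bj a_\bj\bomega_\bj^{\balpha}\Big)\bx^{\balpha}.
\end{equation*}
Writing $p_N=\sum_{\balpha}p_\balpha\bx^\balpha$, I would choose $a$ to solve the moment equations
$$\sum_\bj a_\bj\bomega_\bj^\balpha=\frac{p_\balpha\,\balpha!}{c_{|\balpha|}|\balpha|!}\qquad\text{for }\max_i\alpha_i\le N.$$
This is possible because these equations are a tensor product of one-dimensional moment problems on the quasi-Chebyshev sets $\{\omega_{k,i}^{(m)}\}_i$. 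The lower bound \eqref{eqn:derivative-bound} is exactly what prevents division by a vanishing or tiny $c_k$: it costs at most a factor $\rho_\sigma^{|\balpha|}$.

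For the one-dimensional solve, I would pass to Chebyshev coordinates on each $I_k'$, so that monomials in $\omega$ become cosine polynomials in $\theta$ of degree at most $N$. I would then take $a$ to be the minimal-norm (weighted least-squares) solution with the weights $\tau_j$ of Lemma~\ref{lem:MZ-inequ-0-pi}. Since $N\le C_2 m$, that lemma gives a norm-equivalent discrete inner product on $\mathcal C_N$. So the solution is the discrete analogue of integrating against a density $g(\theta)\in\mathcal C_N$ (in each variable), and $\sum_j|a_j|$ is controlled by $\|g\|_{L^2}$, hence by the prescribed moments measured in the dual (Chebyshev-coefficient) norm. Lemmas~\ref{lem:transform-cheby}--\ref{lem:limit-cheby} let me normalize intervals (for example, shift so $0$ is an endpoint, or rescale) without losing quasi-Chebyshev structure; this uses ${\bm 0}\in I'^d$.

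The residual consists of the monomials with some $\alpha_i>N$, whose coefficients $c_{|\balpha|}\frac{|\balpha|!}{\balpha!}\sum_\bj a_\bj\bomega_\bj^\balpha$ are not controlled by the equations. Their $H^s(\Omega)$ contribution I would bound by $\sum_\bj|a_\bj|\,\sup_{\bomega\in I'^d}\big\|\sum_{\max\alpha_i>N}c_{|\balpha|}\tfrac{|\balpha|!}{\balpha!}\bomega^\balpha\bx^\balpha\big\|_{C^{\lceil s\rceil}(I^d)}$. This is the Taylor tail of $\sigma$ at the points $\bomega\cdot\bx$, which lie compactly inside $\Omega_\sigma$. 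By Cauchy estimates it decays like $\kappa^{N}$ for some $\kappa<1$, derivatives included.

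\textbf{Main obstacle.} The hardest step is balancing the size of $a$ against this geometric tail decay. The moments of $p_N$ in the monomial basis can be exponentially large in $N$, and dividing by $c_k$ adds a further factor $\rho_\sigma^{k}$, so $\sum|a_\bj|$ may grow like $A^N$. The total error is then $A^N\kappa^N\|f\|_{H^r}$. This is acceptable only if $A\kappa<1$, or if the residual can be made exponentially small after choosing $c$ in $N=cm$ appropriately.

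My first attempt would avoid the monomial basis. I would expand $p_N$ in Chebyshev polynomials adapted to $I^d$, whose coefficients are bounded by $\|f\|_{H^r}$. I would then estimate the induced moment data for $g$ directly via the generating function $\sigma(\omega x)$, viewed as an analytic kernel in $(\omega,x)$. In that form, both the lower bound $\rho_\sigma^{-k}$ on $c_k$ and the containment \eqref{eqn:param-bound} appear as radii of Bernstein ellipses, and the required inequality becomes a comparison of ellipse parameters. If the constants still do not close, I would shrink $c$, trading a constant factor in the rate for exponential slack. Failing that, I would rescale the Taylor centre inside $I'$, using the quasi-Chebyshev stability lemmas, so that the ellipse in $\omega$ fits.
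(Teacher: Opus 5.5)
\textbf{There is a genuine gap.} It sits in your expansion step and in the error balance you flag as the main obstacle.

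First, the identity $\sum_\bj a_\bj\sigma(\bomega_\bj\cdot\bx)=\sum_{\balpha}c_{|\balpha|}\frac{|\balpha|!}{\balpha!}\bigl(\sum_\bj a_\bj\bomega_\bj^{\balpha}\bigr)\bx^{\balpha}$ does not follow from \eqref{eqn:param-bound}. The lower bound \eqref{eqn:derivative-bound} forces the Taylor series of $\sigma$ at $0$ to have radius of convergence at most $\rho_\sigma$. But \eqref{eqn:param-bound} only places the interval $\{\bomega\cdot\bx\}$ compactly inside the arbitrary domain $\Omega_\sigma$, not inside that disc. So your ``Taylor tail'' can diverge rather than decay like $\kappa^N$. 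This already happens in the paper's own application: in the proof of Theorem~\ref{thm_main_1d} the activation is $1/(1+t)$ (radius $1$), while $\omega x$ ranges over $[0,\Delta\sup\tilde I]$, which can be far larger than $1$.

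Second, even when the series converges, shrinking $c$ cannot rescue the estimate. Because you match moments only up to coordinate degree $N$, both the size of $a$ and the tail are governed by the same $N$. Already for the single target $x^N$, testing the constraints against the Chebyshev polynomial of $I'$ forces $\sum_j|a_j|\ge |c_N|^{-1}2^{N-1}(2/|I'|)^N$, independently of $m$. Your bound therefore has the form $(A\kappa)^N$, where $A$ and $\kappa$ depend on $\sigma$ and the intervals but not on $c$ or $m/N$. Taking $N=cm$ with small $c$ produces no slack, and nothing forces $A\kappa<1$. Moving the Taylor centre is not available either. Only at $\bomega=0$ does $\partial_\bomega^{\balpha}\sigma(\bomega\cdot\bx)$ equal $\sigma^{(|\balpha|)}(0)\bx^{\balpha}$ with an $\bx$-independent factor, and \eqref{eqn:derivative-bound} is a hypothesis at $0$.

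\textbf{The missing idea} is to decouple the degree to which $\bomega$-moments are matched from the degree of the target polynomial, and to avoid any global power series of $\sigma$. The paper sets
\[
\Psi_\balpha=\sigma^{(|\balpha|)}(0)^{-1}\sum_\bj\partial_\bomega^{\balpha}l_\bj(0)\,\sigma(\bomega_\bj\cdot\bx),
\]
where $l_\bj$ are the cardinal functions of the tensor least-squares operator of degree $J\simeq C_2m$ built from Lemma~\ref{lem:MZ-inequ-0-pi}. Thus all moments up to degree $J$ in each variable are matched, not just those up to $N$. Writing $\sigma_\bx(\bomega)=\sigma(\bomega\cdot\bx)$, one then has $\bx^{\balpha}-\Psi_\balpha=\sigma^{(|\balpha|)}(0)^{-1}\partial_\bomega^{\balpha}(\sigma_\bx-\mathcal I_m^{\otimes d}\sigma_\bx)(0)$. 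The argument proceeds in three steps:
\begin{enumerate}
\item Corollary~\ref{cor:tensor-interp} bounds the interpolation error on a Bernstein polyellipse around $I'^d$ by $\theta^J\sqrt J$; this uses only \eqref{eqn:param-bound}.
\item Cauchy's formula on a fixed polydisc of radius $\eta_\sigma$ about $\bomega=0$, together with \eqref{eqn:derivative-bound}, turns this into $\|\bx^{\balpha}-\Psi_\balpha\|_{W^\infty_s}\lesssim\Gamma_\sigma^{|\balpha|}\theta^J\sqrt J$.
\item The growth $\Gamma_\sigma^{|\balpha|}$, together with the Legendre-coefficient growth $\Lambda^{|\balpha|}$, lives at scale $\ell=\lceil\varepsilon m\rceil$, while the decay lives at scale $J\simeq C_2m$. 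Choosing $\varepsilon$ with $\Upsilon^{\varepsilon}\theta^{C_2}<1$ closes the bound.
\end{enumerate}
Your extension step and polynomial Jackson step are compatible with this. What must be replaced is the moment matching to degree $N$ combined with a global Taylor tail.
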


The next theorem treats $\tanh$ with prescribed shifts. Unlike the first result, it does not require a Taylor-coefficient condition at the origin: the rational transformation used in its proof permits an arbitrary quasi-Chebyshev family on the bias interval.

\begin{theorem}\label{thm_main_1d}
Let $I,I'\subset \RR$ be bounded intervals, let $m\in\NN_+$ and $f\in H^r(I)$, and let $\{x_i^{(m)}\}_{i=0}^m\subset I'$ be any quasi-Chebyshev set. Then there exists $a\in\RR^{m+1}$ such that, for every $0\leq s\leq r$,
    \begin{equation}
        \lt\|f - \sum_{i=0}^{m} a_i\tanh(\circ-x_i^{(m)})\rt\|_{H^s(I)}\lesssim m^{-(r-s)} \|f\|_{H^r(I)}.
    \end{equation}
\end{theorem}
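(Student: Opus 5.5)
The plan is to reduce Theorem~\ref{thm_main_1d} to the one-dimensional case $d=1$ of Theorem~\ref{thm_1d_analytic}. The reduction uses the rational (Möbius) structure of $\tanh$ in exponential variables. Put $y=e^{2x}$ for $x\in I$ and $u_i=e^{2x_i^{(m)}}$. Then
\begin{equation*}
\tanh(x-x_i^{(m)})=\frac{y-u_i}{y+u_i}=\frac{2y}{y+u_i}-1 .
\end{equation*}
Let $[u_{\min},u_{\max}]=e^{2I'}$, put $L=(u_{\max}-u_{\min})/2$ and $u_0=u_{\min}+L$. Define $U_i=u_i-u_0\in[-L,L]$ and $Y=1/(y+u_0)$. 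Then
\begin{equation*}
\frac{1}{y+u_i}=\frac{Y}{1+U_iY}=Y\,\sigma(U_iY),\qquad \sigma(z)=\frac{1}{1+z}.
\end{equation*}
The activation $\sigma$ has Taylor coefficients $(-1)^k$, so it satisfies \eqref{eqn:derivative-bound} with $\rho_\sigma=1$. Since $y_{\min}>0$, we have $|U_iY|\le L/(y_{\min}+u_0)<1$. Hence the products $U Y$, with $U\in[-L,L]$ and $Y$ in the image interval $J$ of $I$, stay in a compact subset of the disc of analyticity of $\sigma$. Thus \eqref{eqn:param-bound} holds with $0\in[-L,L]$. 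The maps $b\mapsto e^{2b}-u_0$ and $x\mapsto (e^{2x}+u_0)^{-1}$ are bi-Lipschitz, and so are their inverses, with constants independent of $m$. By Lemma~\ref{lem:transform-cheby}, $\{U_i\}$ is therefore a quasi-Chebyshev family in $[-L,L]$. The domain change $x\leftrightarrow Y$ is a smooth diffeomorphism between compact intervals, so $H^r(I)$ and $H^r(J)$ norms are equivalent after composition, and likewise for $H^s$.

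With these substitutions,
\begin{equation*}
\sum_{i=0}^m a_i\tanh(x-x_i^{(m)})=2yY\sum_{i=0}^m a_i\sigma(U_iY)-\sum_{i=0}^m a_i .
\end{equation*}
Here $yY=(1/Y-u_0)Y=1-u_0Y$ is a smooth function on $J$ that does not vanish. Multiplication and division by it are therefore bounded on every $H^t(J)$. To approximate $f$, it suffices to approximate $G_\lambda:=(f\circ x(Y)+\lambda)/(2(1-u_0Y))$ by $\sum_i a_i\sigma(U_iY)$, subject to the linear constraint $\sum_i a_i=\lambda$. I would apply Theorem~\ref{thm_1d_analytic} with $d=1$ twice. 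The first application approximates $G_0$ and gives a coefficient vector $a^{(0)}$. The second approximates the fixed smooth function $G^\ast:=1/(2(1-u_0Y))$ and gives a vector $a^{(1)}$; by the same theorem its error is $O(m^{-R})$ in $H^s$ for any $R$, with the norm of $G^\ast$ independent of $m$. The final coefficients are $a=a^{(0)}+\lambda a^{(1)}$, with $\lambda$ determined by $\lambda=\sum_i a^{(0)}_i+\lambda\sum_i a^{(1)}_i$. If the index set is short by one, Lemma~\ref{lem:limit-cheby} allows removing an endpoint node while keeping the family quasi-Chebyshev. The errors then combine to $m^{-(r-s)}\|f\|_{H^r(I)}$, provided $|\lambda|\lesssim\|f\|_{H^r}$.

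The main obstacle is this constant/normalization step. It requires $1-\sum_i a^{(1)}_i$ to be bounded away from $0$ uniformly in $m$, and $\sum_i|a^{(0)}_i|$ to be controlled by $\|f\|_{H^r}$. The first thing I would try is to avoid the fixed point altogether. I would use the extra $(m+1)$-st node, e.g.\ $U_0$, as a dedicated degree of freedom: the function $1-2yY\sigma(U_0Y)$ is a fixed smooth nonconstant correction. If I cannot extract coefficient bounds from the proof of Theorem~\ref{thm_1d_analytic}, I would instead trace its least-squares construction through Lemma~\ref{lem:MZ-inequ-0-pi}, which provides $\ell^2$ coefficient stability with weights $\tau_j\lesssim m^{-1}$. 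That stability gives $|\sum_i a_i|\lesssim\|f\|$ directly and makes the choice of $\lambda$ explicit.
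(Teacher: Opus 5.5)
Your change of variables is correct. Centering at the fixed midpoint $u_0$ is even slightly cleaner than the paper's $m$-dependent substitution $t=1/(s_0^{(m)}+e^{2x})$, because $Y$ and $J$ do not move with $m$. The gap is the normalization step, which you flag but do not close.

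Since $\sigma(0)=1$, one has $\sum_i a_i=\sum_i a_i\sigma(U_i\cdot 0)$. So the constraint prescribes the value of the $Y$-network at $Y=0$, and $0\notin J$. Theorem~\ref{thm_1d_analytic} only provides \emph{some} coefficient vector with small $H^s(J)$ error, so it says nothing about these extrapolated values. Without changing the error on $J$, $1-\sum_i a^{(1)}_i$ may vanish or be arbitrarily small, and $\sum_i a^{(0)}_i$ may be arbitrarily large.

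Your remedy via Lemma~\ref{lem:MZ-inequ-0-pi} does not work. That inequality stabilizes the least-squares interpolant in the parameter variable. The outer coefficients produced in the proof of Theorem~\ref{thm_1d_analytic} are built from $\partial_\omega^\alpha l_j(0)/\sigma^{(|\alpha|)}(0)$ and from Legendre monomial coefficients of size $\Lambda^k$, and the proof gives no bound on them better than exponential in $m$. Even a bound on $|\sum_i a^{(0)}_i|$ would not keep $1-\sum_i a^{(1)}_i$ away from $0$.

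Your first idea, spending one node on a correction, also fails as phrased. Adding a multiple of $1-2yY\sigma(U_0Y)$ to cancel the constant reintroduces a nonconstant term whose coefficient is the same uncontrolled sum.

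What is missing is an exact algebraic elimination of the constant, which is how the paper proceeds (its identity for $\phi_i^{(m)}-\phi_0^{(m)}$). Put $U_\ast:=e^{2x_0^{(m)}}-u_0$ for the shifted pivot node, to avoid the clash with your center $u_0$. Then
\[
\tanh(x-x_i^{(m)})-\tanh(x-x_0^{(m)})
=2yY\bigl(\sigma(U_iY)-\sigma(U_\ast Y)\bigr)
=(U_\ast-U_i)\,\beta_m(Y)\,\sigma(U_iY),
\qquad
\beta_m(Y):=\frac{2(1-u_0Y)\,Y}{1+U_\ast Y}.
\]
So the span of the $m+1$ features contains $\beta_m\cdot\operatorname{span}\{\sigma(U_iY):1\le i\le m\}$.

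On $J$ one has $Y\ge Y_{\min}>0$, $1-u_0Y=yY>0$ and $1+U_\ast Y\ge 1-LY_{\max}>0$. Hence $\beta_m^{\pm1}$ have $C^k(J)$ norms bounded uniformly in $m$, and multiplication by $\beta_m^{\pm1}$ is uniformly bounded on every $H^t(J)$.

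Now apply Theorem~\ref{thm_1d_analytic} once, with $d=1$, to $g/\beta_m$, where $g=f\circ x(\cdot)$. Use the nodes $\{U_i\}_{i=1}^m$, which remain quasi-Chebyshev on $[-L,L]\ni 0$ after deleting the endpoint node, by Lemma~\ref{lem:limit-cheby}. This gives $\sum_{i\ge1}c_i\sigma(U_iY)$. Then $a_i=c_i/(U_\ast-U_i)$ for $i\ge1$ and $a_0=-\sum_{i\ge1}a_i$ work, and no bound on the coefficients is ever needed.
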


The following corollaries lift the one-dimensional cases of the preceding theorems to several dimensions. The construction in \cite{petrushev1998approximation} is used in the form proved in Appendix~\ref{app:quasiuniform-lifting}, where its particular cubature nodes are replaced by an arbitrary quasi-uniform family. In both corollaries, $m$ is the univariate resolution, $n=m^d$ is the total feature scale, and the resulting $L^2$ rate is $m^{-r}=n^{-r/d}$.

\begin{corollary}\label{cor_smooth_act_d}
    Let $d\geq2$, $r>0$, $\Omega\subset\RR^d$ be a bounded domain admitting bounded Sobolev-extension, and the analytic function $\sigma$ be as in Theorem \ref{thm_1d_analytic}. Then for any quasi-uniform set $\Omega_m=\{\theta_i^{(m)}\}_{i=1}^{N_m}\subset \SS^{d-1}$ with $N_m=|\Omega_m|\simeq m^{d-1}$ and any quasi-Chebyshev set $\{\omega_j^{(m)}\}_{j=1}^{m}\subset I'$ satisfying \eqref{eqn:param-bound} on the corresponding projection interval,
    \begin{equation}
        \inf\limits_{\{a_{ij}\}\in\RR^{N_m\times m}}\lt\|f - \sum_{i=1}^{N_m}\sum_{j=1}^m a_{ij}\sigma \lt(\omega_j^{(m)}\theta_i^{(m)}\cdot\circ \rt)\rt\|_{L^2(\Omega)}\lesssim n^{-\frac{r}{d}} \|f\|_{H^r(\Omega)},\quad f\in H^r(\Omega).
    \end{equation}
\end{corollary}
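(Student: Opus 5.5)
The plan is to combine the one-dimensional case of Theorem~\ref{thm_1d_analytic} with the quasi-uniform lifting theorem of Appendix~\ref{app:quasiuniform-lifting}, and then remove the smoothness restriction by real interpolation.

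First, fix the projection interval. Since $\Omega$ is bounded, choose a bounded interval $I$ with $\theta\cdot x\in I$ for every $\theta\in\SS^{d-1}$ and $x\in\Omega$. We may take $I\supset[-\operatorname{diam}$-scale$]$, and it will be slightly enlarged if the lifting theorem requires it. Condition \eqref{eqn:param-bound} on this interval means $\{\omega t:\omega\in I',t\in I\}\Subset\Omega_\sigma$. Define
\begin{equation*}
X_m:=\operatorname{span}\{\sigma(\omega_j^{(m)}\,\cdot):1\leq j\leq m\}\subset L^2(I).
\end{equation*}
Theorem~\ref{thm_1d_analytic} with $d=1$, $s=0$ and smoothness $q$ (the domain $I$ trivially admits extension, and $I\Subset$ a slightly larger interval) gives
\begin{equation*}
\inf_{v\in X_m}\|u-v\|_{L^2(I)}\lesssim m^{-q}\|u\|_{H^q(I)}
\end{equation*}
for every $q>0$. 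This is \eqref{eqn:petrushev-univariate} with $\dim X_m=m$.

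Second, apply the quasi-uniform lifting theorem of the Appendix with the directions $\Omega_m$, where $|\Omega_m|\simeq m^{d-1}$. The space $Y_m=\operatorname{span}\{v(\theta\cdot\circ):v\in X_m,\theta\in\Omega_m\}$ is exactly the span of $\sigma(\omega_j^{(m)}\theta_i^{(m)}\cdot\circ)$. Therefore \eqref{eqn:petrushev-multivariate} yields, for $r>(d-1)/2$ and $q=r-(d-1)/2$,
\begin{equation*}
\inf_{g\in Y_m}\|f-g\|_{L^2(\Omega)}\lesssim m^{-r}\|f\|_{H^r(\Omega)}.
\end{equation*}
Here we use the bounded extension operator to pass from $\Omega$ to the ball or domain class on which the lifting theorem is formulated. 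Since $n=m^d$, this is $n^{-r/d}$. One point to check is that the lifting theorem tolerates a univariate space on an interval $I$ strictly containing the projections, which should be harmless since restriction is contractive.

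Third, handle $0<r\leq(d-1)/2$ by interpolation. Let $E_m(f):=\inf_{g\in Y_m}\|f-g\|_{L^2(\Omega)}$. It satisfies $E_m(f)\leq\|f\|_{L^2}$ and, for fixed $R>(d-1)/2$, $E_m(f)\lesssim m^{-R}\|f\|_{H^R}$. For any splitting $f=f_0+f_1$,
\begin{equation*}
E_m(f)\leq \|f_0\|_{L^2}+C m^{-R}\|f_1\|_{H^R}\lesssim K(m^{-R},f;L^2,H^R).
\end{equation*}
Because $H^r(\Omega)=(L^2,H^R)_{r/R,\infty}$ up to equivalence (via extension, Sobolev spaces on extension domains interpolate, and the $\theta,2$ space embeds in $\theta,\infty$), we obtain $K(t,f)\lesssim t^{r/R}\|f\|_{H^r}$, hence $E_m(f)\lesssim m^{-r}\|f\|_{H^r}$.

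The main obstacle is ensuring the hypotheses of the Appendix lifting theorem are met uniformly in $m$. The constant in the univariate estimate must be independent of $m$ for each fixed $q$, and the domain must fit the theorem's setting. I would verify this by checking that the Appendix result needs only a Jackson estimate on the fixed projection interval, which Theorem~\ref{thm_1d_analytic} provides.
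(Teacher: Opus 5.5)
Your proposal is correct and follows essentially the same route as the paper: the $d=1$ case of Theorem~\ref{thm_1d_analytic} supplies the univariate Jackson estimate, Theorem~\ref{thm:quasi-uniform-ridge-lifting} gives the endpoint rate at smoothness above $(d-1)/2$, and real interpolation via the $K$-functional covers the remaining $r$. The paper avoids your case split by interpolating for every $r$ with $\rho>\max\{r,(d-1)/2\}$. One small slip: $H^r(\Omega)$ is not equal to $(L^2,H^R)_{r/R,\infty}$, which is a Besov space. It is only continuously embedded in it through $(L^2,H^R)_{r/R,2}$, and that embedding is all your argument actually uses.
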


\begin{corollary}\label{cor_tanh_d}
    Let $d\geq2$, $r>0$, and $\Omega\subset\RR^d$ be a bounded domain admitting bounded Sobolev-extension. Then for any quasi-uniform set $\Omega_m=\{\theta_i^{(m)}\}_{i=1}^{N_m}\subset \SS^{d-1}$ with $N_m=|\Omega_m|\simeq m^{d-1}$ and any quasi-Chebyshev set $\{x_j^{(m)}\}_{j=1}^{m}\subset I'$, where $I'\subset \RR$ is a bounded interval,
    \begin{equation}
        \inf\limits_{\{a_{ij}\}\in\RR^{N_m\times m}}\lt\|f - \sum_{i=1}^{N_m}\sum_{j=0}^m a_{ij}\tanh\lt(\theta_i^{(m)}\cdot\circ-x_j^{(m)}\rt)\rt\|_{L^2(\Omega)}\lesssim n^{-\frac{r}{d}} \|f\|_{H^r(\Omega)},\quad f\in H^r(\Omega).
    \end{equation}
\end{corollary}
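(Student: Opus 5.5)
We prove Corollary~\ref{cor_tanh_d} by reducing to a univariate Jackson estimate and then applying the quasi-uniform lifting theorem. The steps are: a univariate bound from Theorem~\ref{thm_main_1d}, lifting via Appendix~\ref{app:quasiuniform-lifting}, and real interpolation to reach all $r>0$.

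\textbf{Step 1: extension and the projection interval.} Since $\Omega$ admits a bounded Sobolev extension, we extend $f$ to $Ef\in H^r(\RR^d)$ with $\|Ef\|_{H^r}\lesssim\|f\|_{H^r(\Omega)}$. We multiply by a cutoff so that $Ef$ is supported in a ball $B$ containing $\Omega$; this ball is the domain class of \cite{petrushev1998approximation}. Then $\theta\cdot x\in I:=[-R,R]$ for every $\theta\in\SS^{d-1}$ and $x\in B$, so $I$ is the projection interval.

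\textbf{Step 2: the univariate space and lifting.} On $I$ we set
\[
X_m:=\operatorname{span}\{\tanh(\circ-x_j^{(m)}):0\le j\le m\}.
\]
Theorem~\ref{thm_main_1d} with $s=0$ gives \eqref{eqn:petrushev-univariate}:
\[
\inf_{v\in X_m}\|u-v\|_{L^2(I)}\lesssim m^{-q}\|u\|_{H^q(I)}\quad\text{for every }q>0.
\]
The constant is uniform in the quasi-Chebyshev family, since only the quasi-uniformity constants enter the proof of that theorem. For any $v\in X_m$ and $\theta\in\Omega_m$, the ridge function $v(\theta\cdot x)$ is a combination of the features $\tanh(\theta\cdot x-x_j^{(m)})$. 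Hence
\[
Y_m\subset\operatorname{span}\{\tanh(\theta_i^{(m)}\cdot\circ-x_j^{(m)})\},
\]
and this space has dimension at most $(m+1)N_m\simeq m^d=n$. The quasi-uniform lifting theorem of Appendix~\ref{app:quasiuniform-lifting} then gives \eqref{eqn:petrushev-multivariate}: for $r=q+(d-1)/2$, that is for every $r>(d-1)/2$,
\[
\inf_{g\in Y_m}\|f-g\|_{L^2(\Omega)}\lesssim m^{-r}\|f\|_{H^r(\Omega)}.
\]

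\textbf{Step 3: small smoothness by interpolation.} For $0<r\le(d-1)/2$, fix $r_1>(d-1)/2$ and let $E_m(f):=\inf_{g\in Y_m}\|f-g\|_{L^2}$. We have $E_m(f)\le\|f\|_{L^2}$ and $E_m(f)\lesssim m^{-r_1}\|f\|_{H^{r_1}}$. For any splitting $f=f_0+f_1$, subadditivity gives
\[
E_m(f)\le E_m(f_0)+E_m(f_1)\lesssim\|f_0\|_{L^2}+m^{-r_1}\|f_1\|_{H^{r_1}}=K(f,m^{-r_1}).
\]
Here $K$ is the Peetre functional of the pair $(L^2(\Omega),H^{r_1}(\Omega))$. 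Since
\[
(L^2(\Omega),H^{r_1}(\Omega))_{r/r_1,\infty}\supset H^r(\Omega)
\]
on extension domains, we get $K(f,t)\lesssim t^{r/r_1}\|f\|_{H^r}$. Taking $t=m^{-r_1}$ yields $m^{-r}=n^{-r/d}$.

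\textbf{Main obstacle.} The delicate point is checking that the lifting theorem's hypotheses are met exactly by our data. First, its univariate estimate must hold on the specific interval $I$ determined by the domain, with a constant independent of $m$; Theorem~\ref{thm_main_1d} allows arbitrary $I$, so this is fine. Second, the direction set must satisfy $|\Omega_m|\simeq m^{d-1}$, which matches the cubature exactness degree $\sim m$ used in the appendix. If the appendix is stated only for functions supported in the unit ball, we rescale $x\mapsto x/R$. This rescaling turns the features into $\tanh(R\,\theta\cdot y-x_j)$, which are not of the required form. To avoid this, we instead apply Theorem~\ref{thm_main_1d} on the interval $[-R,R]$ directly, so that the univariate space in the scaled variable is exactly $\operatorname{span}\{\tanh(R t-x_j)\}$. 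This space already satisfies the univariate Jackson estimate on $[-1,1]$.
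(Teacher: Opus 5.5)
Your proposal is correct and follows the paper's route. Theorem~\ref{thm_main_1d} at $s=0$ gives the univariate Jackson estimate, Theorem~\ref{thm:quasi-uniform-ridge-lifting} lifts it to the rate $m^{-r}$ for $r>(d-1)/2$, and a $K$-functional argument between $L^2(\Omega)$ and a smoother Sobolev space covers every $r>0$. The differences are only presentational. The paper interpolates for every $r$, choosing $\rho>\max\{r,(d-1)/2\}$, whereas you interpolate only when $r\le(d-1)/2$. You also spell out the extension to a ball and the dilation $t\mapsto Rt$ of the univariate space, which the paper leaves implicit in the hypotheses of the lifting theorem.
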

\begin{proof}[Proof of Corollary \ref{cor_smooth_act_d} and \ref{cor_tanh_d}]
Let $r>0$ be arbitrary. Choose
$$
\rho>\max\left\{r,\frac{d-1}{2}\right\},
\qquad
q=\rho-\frac{d-1}{2}>0.
$$
Let $V_m$ denote the feature space displayed in the corresponding corollary. Applying the one-dimensional case of Theorem~\ref{thm_1d_analytic}, respectively Theorem~\ref{thm_main_1d}, with smoothness $q$, and then applying Theorem~\ref{thm:quasi-uniform-ridge-lifting} gives
\begin{equation}\label{eqn:high-smoothness-endpoint}
\inf_{v\in V_m}\|g-v\|_{L^2(\Omega)}
\lesssim
m^{-\rho}\|g\|_{H^\rho(\Omega)},
\qquad g\in H^\rho(\Omega).
\end{equation}

The real-interpolation identity and the standard embedding of interpolation spaces give
\begin{equation}\label{eqn:interpolation-identity-domain}
H^r(\Omega)
=
\bigl(L^2(\Omega),H^\rho(\Omega)\bigr)_{\vartheta,2}
\hookrightarrow
\bigl(L^2(\Omega),H^\rho(\Omega)\bigr)_{\vartheta,\infty},
\end{equation}
where $\vartheta=\frac{r}{\rho}\in(0,1)$ (see, e.g., \cite[Theorems~3.1.2 and~6.4.5]{bergh2012interpolation}). Consequently, the K-functional satisfies
\begin{equation}\label{eqn:k-functional-estimate}
K(t,f;L^2(\Omega),H^\rho(\Omega))
\lesssim
t^\vartheta\|f\|_{H^r(\Omega)},
\qquad t>0.
\end{equation}
Taking $t=m^{-\rho}$, we may choose $g_m\in H^\rho(\Omega)$ such that
\begin{equation}\label{eqn:smooth-intermediate-approximation}
\|f-g_m\|_{L^2(\Omega)}
+
m^{-\rho}\|g_m\|_{H^\rho(\Omega)}
\lesssim
m^{-\rho\vartheta}\|f\|_{H^r(\Omega)}
=
m^{-r}\|f\|_{H^r(\Omega)}.
\end{equation}

By \eqref{eqn:high-smoothness-endpoint}, there exists $v_m\in V_m$ such that
$$
\|g_m-v_m\|_{L^2(\Omega)}
\lesssim
m^{-\rho}\|g_m\|_{H^\rho(\Omega)}.
$$
Therefore,
\begin{align*}
\|f-v_m\|_{L^2(\Omega)}
&\leq
\|f-g_m\|_{L^2(\Omega)}
+
\|g_m-v_m\|_{L^2(\Omega)} \\
&\lesssim
\|f-g_m\|_{L^2(\Omega)}
+
m^{-\rho}\|g_m\|_{H^\rho(\Omega)} \lesssim
m^{-r}\|f\|_{H^r(\Omega)}.
\end{align*}
Since $v_m$ belongs to the feature space displayed in the corresponding corollary, it has the asserted linearized neural-network representation. Moreover, $V_m$ contains at most order $n=m^d$ prescribed features, and hence
$$
m^{-r}=n^{-r/d}.
$$
\end{proof}

The next theorem gives a generic mechanism for the non-cancellation condition \eqref{eqn:derivative-bound}; Theorem \ref{thm_assumption_verify} then records concrete activations for which the condition can be verified.

\begin{theorem}\label{thm_assumption_general}
Let $\sigma$ be real analytic on the real axis and admit the meromorphic continuation described below. Define
    $$
    \sigma_t(x)=\sigma(x+t).
    $$
    Suppose that $I$ is a finite interval and that, for every $t\in I$, the same conjugate pair of simple poles $p_\ast,\overline{p_\ast}$ of $\sigma$ gives the unique nearest singularities $p_\ast-t,\overline{p_\ast}-t$ of $\sigma_t$, with a common annular neighborhood free of other singularities. Then, for almost every $t\in I$, $\sigma_t$ satisfies assumption \eqref{eqn:derivative-bound} in Theorem \ref{thm_1d_analytic}; that is, there exist positive constants $C$ and ${\rho_t}$, possibly depending on $t$, such that
    \begin{equation}
        \frac{1}{k!}\lt|\sigma_t^{(k)}(0)\rt|\geq C \rho_t^{-k},\quad\forall k\in\NN.
    \end{equation}
\end{theorem}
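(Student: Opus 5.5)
We write the Taylor coefficients of $\sigma_t$ at $0$ through the singular part at the two nearest poles. Let $p_\ast=t_0+i\beta$ with $\beta\neq0$; note that $\beta\neq0$ because $\sigma$ is real analytic on the real axis. Let $R_\ast$ be the residue of $\sigma$ at $p_\ast$; by the reflection principle ($\sigma$ is real on $\RR$) the residue at $\overline{p_\ast}$ is $\overline{R_\ast}$. Put
$$
g(z)=\frac{R_\ast}{z-(p_\ast-t)}+\frac{\overline{R_\ast}}{z-(\overline{p_\ast}-t)},\qquad h_t=\sigma_t-g .
$$
Write $q=p_\ast-t$ and $\rho_t=|q|$. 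By hypothesis there is a common annulus free of other singularities, so $h_t$ is analytic in a disc of radius $\rho_t'>\rho_t$. Cauchy's estimates give $|h_t^{(k)}(0)|/k!\lesssim (\rho_t'')^{-k}$ for any $\rho_t<\rho_t''<\rho_t'$. The hypothesis of a common annulus should let $\rho_t''/\rho_t$ be chosen uniformly on compact subsets of $I$, but we only need a pointwise statement.

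For the pole part, the geometric series gives
$$
\frac{g^{(k)}(0)}{k!}=-\frac{R_\ast}{q^{k+1}}-\frac{\overline{R_\ast}}{\overline q^{\,k+1}}=-2\,\mathrm{Re}\bigl(R_\ast q^{-k-1}\bigr).
$$
Writing $q=\rho_t e^{i\phi_t}$ and $R_\ast=|R_\ast|e^{i\psi}$, this becomes
$$
\frac{g^{(k)}(0)}{k!}=-2|R_\ast|\rho_t^{-k-1}\cos\bigl(\psi-(k+1)\phi_t\bigr).
$$
Here $\phi_t=\arg(t_0-t+i\beta)$ is a strictly monotone real-analytic function of $t$ with nonzero derivative. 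Hence the statement reduces to showing that, for a.e.\ $t$, the quantity $|\cos(\psi-(k+1)\phi_t)|$ is not too small. Specifically, we want
$$
|\cos(\psi-(k+1)\phi_t)|\gtrsim (k+1)^{-2}
$$
for all $k$, with a constant depending on $t$.

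This Diophantine step is the main obstacle. I would handle it with Borel--Cantelli. For fixed $k$, the set of $t\in I$ with $|\cos(\psi-(k+1)\phi_t)|<\varepsilon_k$ means $(k+1)\phi_t$ lies within $O(\varepsilon_k)$ of $\psi+\pi/2+\pi\ZZ$. As $t$ ranges over $I$, $(k+1)\phi_t$ sweeps an interval of length $O(k)$ with derivative $\simeq k$. So this set is a union of $O(k)$ intervals, each of length $O(\varepsilon_k/k)$, and its measure is $O(\varepsilon_k)$. Taking $\varepsilon_k=(k+1)^{-2}$ makes the measures summable. Borel--Cantelli then gives, for a.e.\ $t$, some $K_t$ with $|\cos(\psi-(k+1)\phi_t)|\geq (k+1)^{-2}$ for all $k\geq K_t$.

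Combining, for a.e.\ $t$ and $k\geq K_t$,
$$
\frac{|\sigma_t^{(k)}(0)|}{k!}\geq 2|R_\ast|\rho_t^{-k-1}(k+1)^{-2}-C_t(\rho_t'')^{-k}.
$$
Since $\rho_t''>\rho_t$, the second term is $o\bigl(\rho_t^{-k}(k+1)^{-2}\bigr)$. So for $k$ large the right-hand side is $\gtrsim \rho_t^{-k}k^{-2}\geq (\rho_t')^{-k}$ for any fixed $\rho_t'>\rho_t$; we absorb the polynomial factor by slightly enlarging the radius.

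It remains to handle the finitely many $k<K_t$. We discard the further null set of $t$ where some $\sigma^{(k)}(t)=0$. This set is countable, since each nonconstant real-analytic $\sigma^{(k)}$ has isolated zeros; $\sigma^{(k)}$ cannot be identically zero because $\sigma$ has poles. Then choose $C$ small enough to cover these finitely many $k$. The constants $C$ and $\rho_t$ in \eqref{eqn:derivative-bound} therefore depend on $t$, as the statement allows.
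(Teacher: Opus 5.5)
Your proposal is correct and follows essentially the same route as the paper. You split off the conjugate pole pair to get the leading term $-2|R_\ast|\rho_t^{-k-1}\cos(\psi-(k+1)\phi_t)$ plus a geometrically smaller remainder. You then combine the strict monotonicity of $t\mapsto\arg(p_\ast-t)$ with Borel--Cantelli to bound the cosine below polynomially for a.e.\ $t$ and all large $k$, and handle the finitely many small $k$ by removing the countable zero sets of the $\sigma^{(k)}$. The only cosmetic difference is that you run Borel--Cantelli directly in $t$ with $\varepsilon_k=(k+1)^{-2}$, whereas the paper states it as an inhomogeneous Diophantine condition on $\alpha(t)=\theta_\ast(t)/\pi$ and transfers null sets through the diffeomorphism $t\mapsto\alpha(t)$.
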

\begin{theorem}\label{thm_assumption_verify}
    The following activation functions satisfy the assumption \eqref{eqn:derivative-bound}.
    \begin{enumerate}
    \item $\displaystyle\sigma(x) = \tanh(x+\frac{q\pi}{2})$ for $\displaystyle q\in\QQ\backslash\{0,\pm 1\}$.
    \item $\displaystyle\sigma(x) = \frac{1}{1+e^{-x+q\pi }}$ for $q\in\QQ\backslash\{0,\pm 1\}$.
    \item $\displaystyle\sigma(x) = \frac{c_1 + c_2 x}{1+x^2}$ for $c_1,c_2\neq 0$.
    \item $\displaystyle\sigma(x) = \frac{1}{1+(x+q)^2}$ for $q\in\QQ\backslash\{0,\pm 1\}$.
    \item $\displaystyle\sigma(x) = \arctan(x+q)$ for $q\in\QQ\backslash\{0,\pm 1\}$.
    \end{enumerate}
\end{theorem}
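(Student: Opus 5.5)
For items 1, 2, 4 and 5 I would reduce the Taylor coefficients at $0$ to a main term coming from one conjugate pair of simple poles, $2\operatorname{Re}\bigl(c\,p^{-k-1}\bigr)$, plus an exponentially smaller remainder. The condition $q\in\QQ\setminus\{0,\pm1\}$ then enters in two ways: it rules out exact cancellation of the main term, and, through a Diophantine estimate for the pole argument, it keeps the cancellation polynomially controlled. Item 3 is handled by a direct computation. Unlike the almost-every-$t$ statement of Theorem \ref{thm_assumption_general}, this argument has to work for the specific shifts listed.

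\textbf{Item 3} is immediate. From $\frac{1}{1+x^2}=\sum_{\ell}(-1)^\ell x^{2\ell}$, the coefficient of $x^{2\ell}$ is $(-1)^\ell c_1$ and the coefficient of $x^{2\ell+1}$ is $(-1)^\ell c_2$. Hence $\frac{1}{k!}|\sigma^{(k)}(0)|\geq\min(|c_1|,|c_2|)$, and \eqref{eqn:derivative-bound} holds with $\rho_\sigma=1$.

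\textbf{Item 4} can be computed exactly. Partial fractions give
$$\frac{1}{1+(x+q)^2}=\operatorname{Im}\frac{1}{x+q-i}.$$
The Taylor coefficients at $0$ are therefore $(-1)^k\operatorname{Im}\bigl((q-i)^{-k-1}\bigr)$, which has modulus $(1+q^2)^{-(k+1)/2}|\sin((k+1)\varphi)|$ with $\varphi=\arg(q+i)$. Two facts are needed.

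\emph{(a) No exact zeros.} $\sin((k+1)\varphi)\neq0$ for every $k$. Vanishing would make $\varphi$ a rational multiple of $\pi$ with $\tan\varphi=1/q\in\QQ$. By Niven's theorem the only rational values of $\tan$ at rational multiples of $\pi$ are $0,\pm1$, and these are excluded.

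\emph{(b) A polynomial lower bound.} I would show $|\sin((k+1)\varphi)|\gtrsim k^{-\mu}$ for some $\mu>0$. Since $e^{2i\varphi}=(q+i)/(q-i)$ is algebraic and not a root of unity, Baker's (or Gelfond's) lower bound for linear forms in logarithms gives
$$\bigl|(k+1)\log e^{2i\varphi}-2\pi i j\bigr|\gtrsim k^{-\mu}$$
for all integers $j$.

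Combining (a) and (b), the coefficients are bounded below by $C\rho^{-k}$ for any $\rho>\sqrt{1+q^2}$, after absorbing finitely many $k$ into $C$. \textbf{Item 5} follows from item 4, because $\sigma'(x)=1/(1+(x+q)^2)$: the coefficients of $\arctan(x+q)$ for $k\geq1$ are those of item 4 divided by $k$, the loss of $1/k$ is absorbed by enlarging $\rho$, and the constant term $\arctan q\neq0$.

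\textbf{Items 1 and 2} use the same argument, now with a remainder. Write $p_\ast=\frac{\pi}{2}(-q+i)$. The nearest poles of $\sigma(x)=\tanh(x+q\pi/2)$ are $p_\ast$ and $\overline{p_\ast}$, both with residue $1$; all other poles are $\frac{\pi}{2}(-q+i(2n+1))$, which have strictly larger modulus $\geq R>|p_\ast|$. Subtracting the two principal parts leaves a function analytic on $|z|<R$. Cauchy's estimate on a circle of radius $R'\in(|p_\ast|,R)$ then gives
$$\frac{1}{k!}\sigma^{(k)}(0)=-2\operatorname{Re}\bigl(p_\ast^{-k-1}\bigr)+O(R'^{-k}).$$
The main term has modulus $2|p_\ast|^{-k-1}|\cos((k+1)\arg p_\ast)|$, and $\tan\arg p_\ast=-1/q$ is again rational. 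The Niven and Baker arguments therefore give no exact vanishing and a lower bound $\gtrsim k^{-\mu}|p_\ast|^{-k}$. For $k\geq k_0$ this dominates the remainder, and \eqref{eqn:derivative-bound} holds with $\rho_\sigma$ slightly larger than $|p_\ast|$.

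For item 2, I would use $\frac{1}{1+e^{-x+q\pi}}=\frac12+\frac12\tanh\bigl(\frac{x-q\pi}{2}\bigr)$, which reduces to a scaled instance of item 1 with nearest poles $q\pi\pm i\pi$. For $k\geq1$ the additive constant $\frac12$ does not matter; for $k=0$ one checks directly that $\sigma(0)>0$.

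\textbf{Main obstacle.} The hard step is the finitely many $k<k_0$ in items 1 and 2. There the remainder is not small, so exact vanishing of $\sigma^{(k)}(0)$ cannot be excluded by the pole argument. My first attempt would be an algebraic one: $\sigma^{(k)}(0)=P_k(T)$, where $P_k$ is an integer polynomial and $T=\tanh(q\pi/2)$. Since $e^{\pi}$ is transcendental (Gelfond), $T$ is transcendental, so $P_k(T)\neq0$ unless $P_k\equiv0$, which does not happen. The other delicate point is quoting the Baker-type bound with the correct dependence on $k$.
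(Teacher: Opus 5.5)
Your proposal is correct and follows essentially the paper's route: a main term from the nearest conjugate pair of poles plus an exponentially smaller remainder; Niven's theorem to show the pole argument is not a rational multiple of $\pi$; and a polynomial-in-$k$ Diophantine lower bound for that argument. You obtain that bound directly from Baker--Gelfond linear forms in logarithms, whereas the paper cites a lemma of Ferreira encoding the same estimate and handles the cosine phase shift by a doubling trick. For the finitely many low-order coefficients in items 1--2 you use exactly the paper's argument: $\sigma^{(k)}(0)=P_k(\tanh(q\pi/2))$ with $P_k\in\ZZ[z]$ nonzero and $\tanh(q\pi/2)$ transcendental. The only step you left unchecked, $P_k\not\equiv 0$, is immediate: the recursion $P_{k+1}=(1-z^2)P_k'$ gives $\deg P_k=k+1$, or simply note that $\tanh$ is not a polynomial. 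So the ``main obstacle'' you flagged is already resolved by your own first attempt.
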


\section{Proofs of the main results}\label{sec:proofs}
We first introduce a least-squares interpolation operator and establish its approximation property. Interpolation in the parameter variables then yields linearized-network approximants to polynomials with exponentially small errors, from which Theorem \ref{thm_1d_analytic} follows. Finally, a change of variables and the rational structure of $\tanh$ reduce Theorem \ref{thm_main_1d} to the analytic construction.
\subsection{Least-squares interpolation for analytic functions}
For any bounded interval $I\subset\RR$, we define a least-square interpolation operator $\mathcal I_m\colon C(I)\to \mathbb P_J(I)$ associated with points $\{x_j\}_{j=1}^m\subset I$:
\begin{equation}\label{eqn:interp-def}
    \mathcal I_m f = \arg\min_{p_J\in \mathbb P_J} \sum_{j=1}^m \tau_j |f(x_j)-p_J(x_j)|^2,
\end{equation}
where $\{\tau_i\}_{i=1}^m$ and $J$ are from Lemma \ref{lem:MZ-inequ-0-pi}.
\begin{lemma}\label{lem:least-square-interp}
    Consider a quasi-Chebyshev set $\{x_j^{(m)}\}\subset[-1,1]$ and its corresponding least-squares interpolation operator $\mathcal I_m$. Then $\mathcal I_m f$ is uniquely determined, $\mathcal I_m$ is linear, and it can be written in the following form: 
\begin{equation}\label{eqn_interp_oper}
    \mathcal I_m f(x) = \sum_{j=1}^m f(x_j^{(m)})\,l_j^{(m)}(x), 
\end{equation}
    where $l_j^{(m)}\in \mathbb P_J([-1,1])$ are only determined by $\{x_j^{(m)}\}$.
    
    Moreover, suppose $f\colon [-1,1]\to\RR$ is analytic inside and on the Bernstein ellipse $\mathcal{ E}_\rho :=\{\frac{1}{2}(\rho e^{i\theta} + \rho^{-1} e^{-i\theta}):\theta\in [0,2\pi)\}$ for some $\rho>1$, then for any $1<\rho_1<\rho$, 
    $$\sup_{z\in \overline{\mathcal{D}_{\rho_1}}} |f(z)-\mathcal I_m f(z)|\leq C_{\rho,\rho_1}M_{f,\rho}  \lt(\frac{\rho_1}{\rho}\rt)^J \sqrt{J},$$
    where $\mathcal{D}_{\rho_1}$ is the bounded domain enclosed by $\mathcal{E}_{\rho_1}$, $M_{f,\rho}:=\sup_{z\in\mathcal{E}_\rho} |f(z)|$.
\end{lemma}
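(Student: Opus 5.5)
We work in the angular variable. Write $x_j^{(m)}=\cos\theta_j$ with $\{\theta_j\}\subset[0,\pi]$ quasi-uniform, and let $\tau_j$ and $J=\lfloor C_2m\rfloor$ be the weights and degree from Lemma~\ref{lem:MZ-inequ-0-pi}. Under $x=\cos\theta$ the space $\mathbb P_J([-1,1])$ is carried isometrically onto $\mathcal C_J$. Hence the weighted sum $\sum_j\tau_j|p(x_j^{(m)})|^2=\sum_j\tau_j|(p\circ\cos)(\theta_j)|^2$ is comparable to $\|p\circ\cos\|_{L^2([0,\pi])}^2$ for every $p\in\mathbb P_J$.

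\textbf{Uniqueness and the representation.} The equivalence above makes $\langle p,q\rangle_\tau:=\sum_j\tau_jp(x_j)q(x_j)$ a genuine inner product on $\mathbb P_J$. A $p\in\mathbb P_J$ vanishing in this seminorm has $p\circ\cos=0$ and so $p=0$. The minimizer in \eqref{eqn:interp-def} is therefore the orthogonal projection of the data onto $\mathbb P_J$ in this inner product, and it is unique. To make this explicit, take a basis $\{\phi_k\}_{k=0}^J$ of $\mathbb P_J$ that is orthonormal for $\langle\cdot,\cdot\rangle_\tau$. Then
$$\mathcal I_mf=\sum_k\Bigl(\sum_j\tau_jf(x_j)\phi_k(x_j)\Bigr)\phi_k .$$
This is linear in $f$ and has the form \eqref{eqn_interp_oper} with $l_j=\tau_j\sum_k\phi_k(x_j)\phi_k$, which depends only on the nodes and weights. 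It also follows that $\mathcal I_m$ reproduces $\mathbb P_J$.

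\textbf{Stability.} Next we bound $\mathcal I_m$ on data. Since $\mathcal I_m g$ is the projection,
$$\|(\mathcal I_m g)\circ\cos\|_{L^2([0,\pi])}^2\lesssim\sum_j\tau_j|\mathcal I_m g(x_j)|^2\le\sum_j\tau_j|g(x_j)|^2\le\Bigl(\sum_j\tau_j\Bigr)\|g\|_{\infty}^2 .$$
The total weight is bounded, because $\sum_j\tau_j\cdot1\simeq\|1\|^2_{L^2([0,\pi])}=\pi$. Thus the cosine polynomial $P=(\mathcal I_mg)\circ\cos$ satisfies $\|P\|_{L^2[0,\pi]}\lesssim\|g\|_\infty$.

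To pass to sup norms on ellipses, expand $P=\sum_{k\le J}c_k\cos k\theta$. Then $\sum_k|c_k|^2\lesssim\|g\|_\infty^2$, so $\mathcal I_mg=\sum_kc_kT_k$ with $\sum_k|c_k|^2\lesssim \|g\|_\infty^2$. On $\overline{\mathcal D_{\rho_1}}$ each Chebyshev polynomial satisfies $|T_k(z)|\le\rho_1^k$. Cauchy--Schwarz then gives
$$\sup_{\overline{\mathcal D_{\rho_1}}}|\mathcal I_mg|\lesssim\|g\|_\infty\Bigl(\sum_{k\le J}\rho_1^{2k}\Bigr)^{1/2}\lesssim_{\rho_1}\rho_1^J\|g\|_\infty .$$

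\textbf{Error estimate.} Let $p^*=\sum_{k\le J}a_kT_k$ be the truncated Chebyshev expansion of $f$. The classical bound for functions analytic inside $\mathcal E_\rho$ is $|a_k|\le2M_{f,\rho}\rho^{-k}$. This gives
$$\|f-p^*\|_{\infty,[-1,1]}\le\frac{2M_{f,\rho}\rho^{-J}}{\rho-1},\qquad \sup_{\overline{\mathcal D_{\rho_1}}}|f-p^*|\le\sum_{k>J}2M_{f,\rho}(\rho_1/\rho)^k\lesssim M_{f,\rho}(\rho_1/\rho)^J .$$
Because $\mathcal I_m$ reproduces $p^*$, we have $f-\mathcal I_mf=(f-p^*)-\mathcal I_m(f-p^*)$. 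Applying the stability bound with $g=f-p^*$ yields
$$\sup_{\overline{\mathcal D_{\rho_1}}}|f-\mathcal I_mf|\lesssim M_{f,\rho}(\rho_1/\rho)^J+\rho_1^J M_{f,\rho}\rho^{-J}\lesssim M_{f,\rho}(\rho_1/\rho)^J .$$
If the Cauchy--Schwarz step is done without the geometric-sum bound, counting $J+1$ terms, the factor $\sqrt J$ in the statement appears; either way the claim holds.

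The main obstacle is the stability step. We have to convert the weighted discrete $L^2$ control coming from Lemma~\ref{lem:MZ-inequ-0-pi} into a sup bound on the complex ellipse. The Chebyshev-coefficient route handles this, provided we track that the implied constants depend only on the Marcinkiewicz--Zygmund constants and on $\rho_1$.
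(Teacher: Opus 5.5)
Your proof is correct. Its skeleton matches the paper's:
\begin{itemize}
\item pass to the cosine variable and use the Marcinkiewicz--Zygmund lower bound to obtain a unique weighted least-squares projection that reproduces $\mathbb P_J$;
\item compare with the truncated Chebyshev expansion $p^\ast$, whose coefficients decay like $\rho^{-k}$;
\item control $\mathcal I_m(f-p^\ast)$ through the contraction of the projection in the discrete weighted norm, together with a bounded total weight.
\end{itemize}
Your orthonormal-basis argument for uniqueness and for the explicit $l_j$ is equivalent to the paper's parallelogram-identity and normal-equations argument.

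The real difference is how you pass from discrete $\ell^2$ control to a sup bound on $\overline{\mathcal D_{\rho_1}}$.
\begin{itemize}
\item The paper works through the real line. It uses the $L^2$-to-$L^\infty$ inequality $\|q\|_{L^\infty}\lesssim J^{1/2}\|q\|_{L^2}$ for $q\in\mathcal C_J$ to get $\|\mathcal I_m F\|_{L^\infty}\lesssim J^{1/2}\|F\|_{L^\infty}$. It then applies the maximum principle to $e^{iJw}q_J(w)$, viewed as a polynomial in $e^{iw}$, to move into the strip at a cost of $\rho_1^J$.
\item You stay in coefficient space. You combine Parseval for the cosine coefficients, the bound $|T_k|\le\rho_1^k$ on the filled ellipse, and Cauchy--Schwarz against $\sum_{k\le J}\rho_1^{2k}\le\rho_1^{2J}/(1-\rho_1^{-2})$.
\end{itemize}

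What each route buys:
\begin{itemize}
\item Your route is shorter and needs no separate inequality or maximum-principle step. It actually removes the factor $\sqrt J$, giving $C_{\rho,\rho_1}M_{f,\rho}(\rho_1/\rho)^J$.
\item The price is a stability constant $(1-\rho_1^{-2})^{-1/2}$ that degenerates as $\rho_1\downarrow1$. The paper's stability factor $\rho_1^J\sqrt J$ is uniform in $\rho_1$.
\item As you note, crudely counting $J+1$ terms recovers the paper's form exactly.
\item Downstream the distinction is immaterial, since $\sqrt J$ is absorbed by the exponential decay in the proof of Theorem~\ref{thm_1d_analytic}.
\end{itemize}
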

\begin{proof}
    Under the transformation $x = \cos \theta$, it is equivalent to consider the least-squares interpolation operator on $\theta\in[-\pi,\pi]$: 
    \begin{equation}
        \mathcal I_m^\theta F(\theta) := \arg\min_{p_J\in \mathcal{C}_J} \sum_{j=1}^m \tau_j |F(\theta_j) - p_J(\theta_j)|^2,
    \end{equation}
    where $\mathcal I_m^\theta$ is defined for continuous and periodic even functions on $[-\pi,\pi]$, 
    $$\mathcal{C}_J:={\rm span}\{\cos(k\theta):0\leq k\leq J\},$$ 
    $\{\theta_j\}_{j=1}^m = \{\arccos x_j\}_{j=1}^m \subset [0,\pi]$ is quasi-uniform by Definition \ref{def_quasi-chebyshev}. Since $\cos(k\theta)$ can be written as a polynomial of $\cos\theta$ with order $k$ and $\cos^k\theta$ can be written as linear combination of functions in $\mathcal{C}_k$, we have
    $$ (\mathcal I_m f)(\cos\theta) = (\mathcal I_m^\theta f(\cos\circ))(\theta).$$

    We claim that: $\mathcal I_m^\theta F$ is uniquely determined for any continuous and periodic even function $F$ on $[-\pi,\pi]$, and 
    $\mathcal I_m^\theta$ is a linear operator can be written in the following form: 
    \begin{equation}
        \mathcal I_m^\theta F(\theta) = \sum_{j=1}^m F(\theta_j)\,l_j^{(m)}(\cos\theta),
    \end{equation}
    which will prove \eqref{eqn_interp_oper}.

    In fact, suppose that \(p_J,q_J\in \mathcal C_J\) are two minimizers and set
    \[
        r_J:=\frac{p_J+q_J}{2}.
    \]
    Let
    \[
    m_F:=\sum_{j=1}^m \tau_j |F(\theta_j)-p_J(\theta_j)|^2
     =\sum_{j=1}^m \tau_j |F(\theta_j)-q_J(\theta_j)|^2
    \]
    be the minimal value. By the minimality of \(p_J\) and \(q_J\), we have
    \[
    m_F\le \sum_{j=1}^m \tau_j |F(\theta_j)-r_J(\theta_j)|^2 .
    \]
    On the other hand, using the identity
    \[
    \left|\frac{a+b}{2}\right|^2
    =
    \frac{|a|^2+|b|^2}{2}
    -\frac{|a-b|^2}{4},
    \]
    with
    \[
    a=F(\theta_j)-p_J(\theta_j),
    \qquad
    b=F(\theta_j)-q_J(\theta_j),
    \]
    we obtain
    \[
    \begin{aligned}
    \sum_{j=1}^m \tau_j |F(\theta_j)-r_J(\theta_j)|^2
    &=
    \frac12 \sum_{j=1}^m \tau_j |F(\theta_j)-p_J(\theta_j)|^2
    +\frac12 \sum_{j=1}^m \tau_j |F(\theta_j)-q_J(\theta_j)|^2 \\
    &\quad
    -\frac14 \sum_{j=1}^m \tau_j |p_J(\theta_j)-q_J(\theta_j)|^2 \\
    &=
    m_F-\frac14 \sum_{j=1}^m \tau_j |p_J(\theta_j)-q_J(\theta_j)|^2 .
    \end{aligned}
    \]
    Combining this with the minimality inequality gives
    \[
    \sum_{j=1}^m \tau_j |p_J(\theta_j)-q_J(\theta_j)|^2=0.
    \]
    Since each term in the sum is nonnegative, it follows that
    \[
    \tau_j |p_J(\theta_j)-q_J(\theta_j)|^2=0,
    \qquad j=1,\ldots,m.
    \]
    Combining with Lemma \ref{lem:MZ-inequ-0-pi} yields $\|p_J-q_J\|_{L^2((0,\pi])}=0$. 

    Since the minimizer is unique, the least-squares operator $\mathcal{I}_m^\theta$ is well defined. Its normal equations are linear in the nodal vector
    \(
    (F(\theta_1),\ldots,F(\theta_m)).
    \)
    The sampling Gram matrix is invertible by Lemma \ref{lem:MZ-inequ-0-pi}; hence the unique minimizer depends linearly on that vector.
    Equivalently, there exist functions \(l_j^{(m)}(\cos\theta)\in \mathcal C_J\) such that
    \[
    \mathcal I_m^\theta F(\theta)
    =
    \sum_{j=1}^m F(\theta_j)\,l_j^{(m)}(\cos\theta).
    \]
    Indeed, \(l_j^{(m)}(\cos\theta)\) can be chosen as the image, under the same least-squares operator, of the nodal data vector \(e_j\), whose \(j\)-th component is \(1\) and whose other components are \(0\). 

    For the second part of the lemma, denote
    $$\mathcal{S}_\rho = \{w\mid {\rm Re}\,w\in [-\pi,\pi], {\rm Im}\,w\in[0,\log\rho]\},\quad \rho\geq 1.$$
    It suffices to prove the following estimate for a periodic even analytic function $F$:

    \begin{equation}
        \sup_{w\in \mathcal{S}_{\rho_1}} |F(w)-\mathcal I_m^\theta F(w)|\leq C_{\rho,\rho_1}M_{F,\rho}  \lt(\frac{\rho_1}{\rho}\rt)^J \sqrt J,
    \end{equation}

    First, it is known that (see, e.g., \cite[Chapter 7.8]{devore1993constructive}) the Fourier coefficients of
    $$ F(w) = \sum_{k=0}^{+\infty} \hat F(k)\cos kw,$$  
    satisfies
    $$ |\hat{F}(k)|\lesssim M_{F,\rho}\rho^{-k}.$$
    Take \begin{equation}\label{eqn:optimal-pj}
        p_J(w) = \sum_{k=0}^J \hat F(k)\cos kw,
    \end{equation}
    we obtain \begin{equation}\label{eqn:optimal-approx}
    \begin{split}
        \sup_{w\in \mathcal{S}_{\rho_1}} |F(w)-p_J(w)| \leq \sup_{w\in \mathcal{S}_{\rho_1}} \sum_{k=J+1}^{+\infty} |\hat F(k)||\cos kw|\lesssim M_{F,\rho}\sum_{k=J+1}^{+\infty} \rho^{-k}\rho_1^k \lesssim M_{F,\rho}\lt( \frac{\rho_1}{\rho} \rt)^J
    \end{split} 
    \end{equation} 

    
    From $\mathcal{I}_m^\theta F\in \mathcal{C}_J$ and the standard Bernstein inequality for trigonometric polynomials (see, e.g. \cite[Chapter 4]{devore1993constructive}), we have
    $$
        \|\mI_m^\theta F \|_{L^\infty([-\pi,\pi])}\lesssim J^{1/2} \|\mI_m^\theta F \|_{L^2([-\pi,\pi])},
    $$
    by Lemma \ref{lem:MZ-inequ-0-pi},
    $$\|\mI_m^\theta F \|_{L^2([-\pi,\pi])}\simeq \lt( \sum_{j=1}^m \tau_j |\mI_m^\theta F(\theta_j)|^2 \rt)^{1/2},$$
    The sampled values of $\mathcal I_m^\theta F$ are the orthogonal projection of the nodal vector of $F$ onto the sampled cosine-polynomial space. Consequently,
    $$\lt( \sum_{j=1}^m \tau_j |\mI_m^\theta F(\theta_j)|^2 \rt)^{1/2}\leq \lt( \sum_{j=1}^m \tau_j |F(\theta_j)|^2 \rt)^{1/2}\lesssim \|F\|_{L^\infty([-\pi,\pi])}.$$
    Combining inequalities above we obtain
    \begin{equation}\label{eqn:stab-of-interp}
        \|\mI_m^\theta F \|_{L^\infty([-\pi,\pi])}\lesssim J^{1/2} \|F \|_{L^\infty([-\pi,\pi])}.
    \end{equation}

    Now using $\mI_m^\theta p_J = p_J$ we obtain
    \begin{equation}\label{eqn:interp-error}
        \|F-\mI_m^\theta F\|_{L^\infty(\mathcal{S}_{\rho_1})} = \|F-p_J - \mI_m^\theta (F-p_J)\|_{L^\infty(\mathcal{S}_{\rho_1})} \leq \|F-p_J\|_{L^\infty(\mathcal{S}_{\rho_1})} + \|\mI_m^\theta (F-p_J)\|_{L^\infty(\mathcal{S}_{\rho_1})}.
    \end{equation}
    For the second term, note that, for any $q_J\in\mathcal{C}_J$, $e^{iJw}q_J(w)$ is a polynomial of degree at most $2J$ in $z=e^{iw}$. The map $z=e^{iw}$ sends $\mathcal{S}_{\rho_1}$ into the unit disc $\{|z|\leq 1\}$, and the resulting polynomial is analytic on that disc. Therefore, by the maximum principle,
    $$\sup_{w\in\mathcal{S}_{\rho_1}} |e^{iJw}q_J(w)|\leq \sup_{w\in[-\pi,\pi]}|q_J(w)|,$$
    and 
    $$\sup_{w\in\mathcal{S}_{\rho_1}} |q_J(w)|\leq \rho_1^J\sup_{w\in[-\pi,\pi]}|q_J(w)|.$$
    By taking $q_J = \mI_m^\theta(F-p_J)$, we get
    \begin{equation}
        \|\mI_m^\theta(F-p_J)\|_{L^\infty(\mathcal{S}_{\rho_1})}\leq \rho_1^J \|\mI_m^\theta(F-p_J)\|_{L^\infty([-\pi,\pi])}.
    \end{equation}
    Together with \eqref{eqn:interp-error}, \eqref{eqn:stab-of-interp} and \eqref{eqn:optimal-approx}, we get
    \begin{equation}
        \|F-\mI_m^\theta F\|_{L^\infty(\mathcal{S}_{\rho_1})} \lesssim \|F-p_J\|_{L^\infty(\mathcal{S}_{\rho_1})} + \rho_1^J\,J^{1/2}\|F-p_J\|_{L^\infty([-\pi,\pi])} \lesssim M_{F,\rho} \lt(\frac{\rho_1}{\rho}\rt)^J \sqrt J.
    \end{equation}
\end{proof}

The preceding interpolation error estimate is stated on the reference interval \([-1,1]\). By an affine change of variables, the same estimate can be transferred to any bounded interval. More precisely, we obtain the following corollary.

\begin{corollary}\label{cor:least-square-interp}
    For any bounded interval $I\subset \RR$, consider a quasi-Chebyshev set $\{x_j^{(m)}\}\subset I$ and its corresponding least-squares interpolation operator $\mathcal I_m$. Suppose $f\colon I\to\RR$ is analytic inside and on the Bernstein ellipse 
    $$\mathcal{E}_{I,\rho} :=\lt\{x_I + \frac{|I|}{4}(\rho e^{i\theta} + \rho^{-1} e^{-i\theta}):\theta\in [0,2\pi)\rt\},$$ where $\rho>1$, $x_I$ is the mid-point of interval $I$. Then for any $1<\rho_1<\rho$, 
    $$\sup_{z\in \mathcal{D}_{I,\rho_1}} |f(z)-\mathcal I_m f(z)|\leq C_{\rho,\rho_1}M_{f,\rho}  \lt(\frac{\rho_1}{\rho}\rt)^J \sqrt{J},$$
    where $\mathcal{D}_{I,\rho_1}$ is the bounded domain enclosed by $\mathcal{E}_{I,\rho_1}$, $M_{f,\rho}:=\sup_{z\in\mathcal{E}_{I,\rho}} |f(z)|$.
\end{corollary}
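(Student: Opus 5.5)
The plan is to reduce the corollary to Lemma \ref{lem:least-square-interp} through the affine map $\phi\colon[-1,1]\to I$, $\phi(t)=x_I+\frac{|I|}{2}t$. Write $I=[a,b]$, so $x_I=\frac{a+b}{2}$ and $|I|=b-a$. I first check that $\phi$ carries the reference Bernstein ellipse onto the one in the statement:
\begin{equation*}
\phi(\mathcal E_\rho)=\Bigl\{x_I+\tfrac{|I|}{2}\cdot\tfrac12\bigl(\rho e^{i\theta}+\rho^{-1}e^{-i\theta}\bigr)\Bigr\}=\mathcal E_{I,\rho}.
\end{equation*}
Since $\phi$ is an affine bijection of $\CC$, it also maps the enclosed domain $\mathcal D_{\rho_1}$ onto $\mathcal D_{I,\rho_1}$. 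Consequently $g:=f\circ\phi$ is analytic inside and on $\mathcal E_\rho$, and $M_{g,\rho}=M_{f,\rho}$.

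Next I transport the nodes. With $t_j:=\phi^{-1}(x_j^{(m)})$, Definition \ref{def_quasi-chebyshev} gives $x_j^{(m)}=x_I+\frac{|I|}{2}\cos\theta_j^{(m)}$ for quasi-uniform $\theta_j^{(m)}\in[0,\pi]$. Hence $t_j=\cos\theta_j^{(m)}$, so $\{t_j\}$ is quasi-Chebyshev on $[-1,1]$ with the \emph{same} angles. The weights $\tau_j$ and the degree $J$ from Lemma \ref{lem:MZ-inequ-0-pi} depend only on the angles, so they coincide for both node sets.

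The main step is the intertwining identity $(\mathcal I_m f)\circ\phi=\mathcal I_m^{[-1,1]}(f\circ\phi)$. Composition with $\phi$ is a bijection of $\mathbb P_J$, and for every $p\in\mathbb P_J$,
\begin{equation*}
\sum_{j}\tau_j\,\bigl|f(x_j^{(m)})-p(x_j^{(m)})\bigr|^2=\sum_j\tau_j\,\bigl|g(t_j)-(p\circ\phi)(t_j)\bigr|^2 .
\end{equation*}
Thus the two least-squares functionals correspond under $p\mapsto p\circ\phi$, and the unique minimizers correspond as well; uniqueness is given by Lemma \ref{lem:least-square-interp}. Both sides are polynomials, so the identity extends to all of $\CC$.

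Finally I combine these facts:
\begin{equation*}
\sup_{z\in\mathcal D_{I,\rho_1}}|f(z)-\mathcal I_mf(z)|=\sup_{w\in\mathcal D_{\rho_1}}|g(w)-\mathcal I_m g(w)|\le C_{\rho,\rho_1}M_{f,\rho}\Bigl(\tfrac{\rho_1}{\rho}\Bigr)^J\sqrt J .
\end{equation*}
The only point needing care is that the weights and $J$ are invariant under the affine change. This holds because the cosine parametrization is preserved exactly, which is what makes the constant independent of $I$ apart from the transported ellipse.
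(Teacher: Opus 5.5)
Your proposal is correct and takes the same approach as the paper, which gives no separate proof and only remarks that the estimate of Lemma~\ref{lem:least-square-interp} transfers to any bounded interval by an affine change of variables. You supply exactly those details: the affine image of the Bernstein ellipse and its interior, the fact that the angles (and hence $\tau_j$ and $J$) are unchanged, and the intertwining identity $(\mathcal I_m f)\circ\phi=\mathcal I_m^{[-1,1]}(f\circ\phi)$.
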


The least-squares interpolation estimate extends to rectangles by tensorization. Let $I^d:= I_1\times\cdots\times I_d$, where every $I_k\subset\RR$ is a bounded interval, and consider quasi-Chebyshev sets $\{x_{k,j}^{(m)}\}_{j=1}^m \subset I_k$, $k=1,2,\ldots,d$. For a function \(f\) of \(d\) variables, define the interpolation operator associated with $\{x_{k,j}^{(m)}\}_{j=1}^m$ and acting only in the \(k\)-th variable by
\[
\bigl(\mathcal{I}_m^{(k)}f\bigr)(z_1,\dots,z_d)
:=
\sum_{j=1}^{m}
f(z_1,\dots,z_{k-1},{x_{k,j}^{(m)}},z_{k+1},\dots,z_d)\,l_j(z_k).
\]
Here and below, the dependence of the one-dimensional basis function $l_j$ on the coordinate $k$ and the level $m$ is suppressed.
The full tensor-product interpolation operator is defined by
\begin{equation}
    \mathcal{I}_m^{\otimes d}
:=
\mathcal{I}_m^{(1)}\mathcal{I}_m^{(2)}\cdots\mathcal{I}_m^{(d)}.
\end{equation}

Since the operators act on different variables, they commute, and the full operator has the following representation
\begin{equation}\label{eqn:multi-dim-interp}
    \lt(\mathcal{I}_m^{\otimes d}f\rt)(z_1,\dots,z_d) = \sum_{1\leq j_1,\ldots,j_d\leq m} 
    f\lt(x_{1,j_1}^{(m)},\ldots,x_{d,j_d}^{(m)}\rt) \prod_{k=1}^d l_{j_k}(z_k).
\end{equation}
To write the above expression more compactly, denote 
$${\bm z} = (z_1,\ldots,z_d),\quad \bx_\bj^{(m)} = \lt(x_{1,j_1}^{(m)},\ldots,x_{d,j_d}^{(m)}\rt),\quad l_\bj({\bm z}) = \prod_{k=1}^d l_{j_k}(z_k),$$ \eqref{eqn:multi-dim-interp} can be written equivalently as
\begin{equation}
    \lt(\mathcal{I}_m^{\otimes d}f\rt)({\bm z}) = \sum_{1\leq j_1,\ldots,j_d\leq m} 
    f(\bx_\bj^{(m)})\, l_\bj({\bm z}).
\end{equation}

\begin{corollary}
\label{cor:tensor-interp}
For any rectangle $I^d = I_1\times\cdots\times I_d \subset \RR^d$, consider quasi-Chebyshev sets $\{x_{k,j}^{(m)}\}_{j=1}^m\subset I_k$, $k=1,2,\ldots,d$, and its corresponding interpolation operator $\mathcal I_m^{\otimes d}$. Suppose that
\(f:I^d\to\RR\) is analytic inside and on the Bernstein ellipse
$$ \mathcal{E}_{I^d,\rho} = \mathcal{E}_{I_1,\rho} \times \cdots \times \mathcal{E}_{I_d,\rho},$$
where $\rho>1$. Then for any $1<\rho_1<\rho$, 
there exists a constant \(C_{d,\rho,\rho_1}>0\), independent of
\(m\) and \(f\), such that
\[
\sup_{\bm z\in\overline{\mathcal{D}_{I^d,\rho_1}}}
\left|
f(\bm z)-\mathcal{I}_m^{\otimes d}f(\bm z)
\right|
\le
C_{d,\rho,\rho_1}\,
M_{f,\rho}
\left(\frac{\rho_1}{\rho}\right)^J\sqrt{J},
\]
where $\mathcal{D}_{I^d,\rho_1}$ is the domain enclosed by $\mathcal{E}_{I^d,\rho_1}$, and
$
M_{f,\rho}
:=
\sup_{\bm z\in\mathcal{E}_{I^d,\rho}}|f(\bm z)|.
$
\end{corollary}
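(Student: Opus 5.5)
The plan is to avoid the telescoping identity $f-\mathcal I_m^{\otimes d}f=\sum_k \mathcal I_m^{(1)}\cdots\mathcal I_m^{(k-1)}(f-\mathcal I_m^{(k)}f)$. In that route each outer operator $\mathcal I_m^{(i)}$ costs a factor $\rho_1^J\sqrt J$ on $\overline{\mathcal D_{I_i,\rho_1}}$, and the extra factors would spoil the rate. Instead I would expand $f$ in a tensor Chebyshev series and use that $\mathcal I_m^{\otimes d}$ factorizes on tensor products. After affine changes of variables we may take every $I_k=[-1,1]$, so the ellipses are the standard $\mathcal E_\rho$. Write
\[
f(\bm z)=\sum_{\bm k\in\NN^d} a_{\bm k}\prod_{i=1}^d T_{k_i}(z_i).
\]

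\textbf{Step 1 (coefficient decay).} Apply the Cauchy integral formula in each variable separately on the polyellipse, using the Joukowski substitution $z_i=\tfrac12(w_i+w_i^{-1})$ with $|w_i|=\rho$. This gives
\[
|a_{\bm k}|\le 2^d M_{f,\rho}\,\rho^{-(k_1+\cdots+k_d)},
\]
which is the $d$-variate version of the bound $|\hat F(k)|\lesssim M\rho^{-k}$ quoted in the proof of Lemma~\ref{lem:least-square-interp}. Since $|T_k(z)|\le\rho_1^k$ on $\overline{\mathcal D_{\rho_1}}$, the series converges absolutely and uniformly there.

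\textbf{Step 2 (one-dimensional action).} By linearity and the product form \eqref{eqn:multi-dim-interp},
\[
\mathcal I_m^{\otimes d}\Big(\prod_i T_{k_i}\Big)=\prod_i \mathcal I_m T_{k_i}.
\]
For $k_i\le J$ the operator reproduces the polynomial, so $\mathcal I_mT_{k_i}=T_{k_i}$. For $k_i>J$ we use two facts from the proof of Lemma~\ref{lem:least-square-interp}. The stability bound \eqref{eqn:stab-of-interp} gives $\|\mathcal I_mT_{k_i}\|_{L^\infty[-1,1]}\lesssim \sqrt J$. The maximum-principle growth bound then gives $\sup_{\overline{\mathcal D_{\rho_1}}}|\mathcal I_mT_{k_i}|\lesssim \rho_1^J\sqrt J$. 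Hence
\[
\sup_{\overline{\mathcal D_{\rho_1}}}|T_{k_i}-\mathcal I_mT_{k_i}|\lesssim \rho_1^{k_i}+\rho_1^J\sqrt J .
\]
The constant here depends only on the constants in Lemma~\ref{lem:MZ-inequ-0-pi}, so the map $f\mapsto f-\mathcal I_m^{\otimes d}f$ can be applied term by term in the uniformly convergent series.

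\textbf{Step 3 (summation).} The difference $f-\mathcal I_m^{\otimes d}f$ retains only those $\bm k$ with some $k_i>J$. For a nonempty set $S\subset\{1,\dots,d\}$, bound the sum over the $\bm k$ with $k_i>J$ exactly for $i\in S$ by $\sum_{\bm k}|a_{\bm k}|\big(|\prod_iT_{k_i}|+|\prod_i\mathcal I_mT_{k_i}|\big)$. By Step 2, $|\mathcal I_mT_{k}|\le C\max(\rho_1^{k},\rho_1^J\sqrt J)$ for every $k$, so this sum is at most a product over coordinates:
\[
\prod_{i\notin S}\sum_{k\le J}\Big(\frac{\rho_1}{\rho}\Big)^{k}\;\cdot\;\prod_{i\in S}\sum_{k>J}\rho^{-k}\big(\rho_1^{k}+C\rho_1^J\sqrt J\big)
\lesssim\Big[\Big(\frac{\rho_1}{\rho}\Big)^J\sqrt J\Big]^{|S|}.
\]
Since $(\rho_1/\rho)^J\sqrt J$ is bounded uniformly in $J$, each factor with $|S|\ge2$ is absorbed, so every $S$ contributes at most $C(\rho_1/\rho)^J\sqrt J$. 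Summing over the $2^d-1$ choices of $S$ gives the claim with a constant $C_{d,\rho,\rho_1}$.

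The step requiring the most care is Step 2: the bound on $\mathcal I_mT_k$ must be uniform in $k>J$ on the complex ellipse and not merely on $[-1,1]$. This follows from the lemma's argument, which controls $\mathcal I_m$ in the sup norm on the real interval via \eqref{eqn:stab-of-interp}, together with the $\rho_1^J$ growth estimate for $\mathcal C_J$.
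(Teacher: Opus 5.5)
Your proof is correct, but the reason you give for avoiding the telescoping identity is mistaken. The paper's proof is that telescoping. With $F_\ell=\mathcal I_m^{(\ell)}\cdots\mathcal I_m^{(1)}f$, it writes $f-\mathcal I_m^{\otimes d}f=\sum_\ell(F_{\ell-1}-F_\ell)$. It then measures $F_\ell$ on the mixed polyellipses $K_\ell$: the already-interpolated variables range over $\overline{\mathcal D_{\rho_1}}$, while the remaining ones still range over $\overline{\mathcal D_{\rho}}$. Each intermediate function stays analytic on the large ellipse in every variable not yet interpolated. Hence Corollary~\ref{cor:least-square-interp}, applied in one variable with the others passive, gives $\sup_{\overline{\mathcal D_{\rho_1}}}|\mathcal I_m g|\le(1+\eta_m)\sup_{\overline{\mathcal D_{\rho}}}|g|$ with $\eta_m=C(\rho_1/\rho)^J\sqrt J$. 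So each outer operator costs only a bounded factor, not $\rho_1^J\sqrt J$. That loss appears only if you measure the input on the real interval, which is unnecessary here. Induction then gives $\sup_{K_\ell}|F_\ell|\le(1+\eta_m)^\ell M_{f,\rho}$, and the result follows by summing $\sup_{K_d}|F_{\ell-1}-F_\ell|\le\eta_m(1+\eta_m)^{\ell-1}M_{f,\rho}$.

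Your spectral route is still valid and genuinely different. Its ingredients are:
\begin{itemize}
\item the tensor Chebyshev coefficient bound $|a_{\bm k}|\le 2^dM_{f,\rho}\rho^{-|\bm k|}$;
\item the factorization $\mathcal I_m^{\otimes d}(\prod_iT_{k_i})=\prod_i\mathcal I_mT_{k_i}$, read off from the nodal formula;
\item the bound $\sup_{\overline{\mathcal D_{\rho_1}}}|\mathcal I_mT_k|\lesssim\rho_1^J\sqrt J$ for $k>J$, from \eqref{eqn:stab-of-interp} together with the $\rho_1^J$ growth estimate on $\mathcal C_J$.
\end{itemize}
The bookkeeping over $S$ is sound. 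The product of nonnegative coordinate sums dominates each term, the factors with $i\in S$ are each $\lesssim(\rho_1/\rho)^J\sqrt J$, and the others are at most $(1-\rho_1/\rho)^{-1}$. Applying the operator term by term is legitimate because $\mathcal I_m^{\otimes d}$ uses only finitely many real point values, where the series converges absolutely.

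The trade-off between the two arguments is this. The paper's induction uses the one-dimensional estimate purely as a black box, so it never reopens the proof of Lemma~\ref{lem:least-square-interp} and works for any family of one-dimensional operators satisfying that bound. Your argument has to reach into that proof for the $L^\infty$ stability and the growth bound. In return it gives a more explicit picture of the error: for example, modes with $|S|\ge2$ contribute only at the higher order $\bigl[(\rho_1/\rho)^J\sqrt J\bigr]^{|S|}$.
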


\begin{proof}

For \(\ell=0,\dots,d\), define the mixed polyellipses
\[
K_\ell
:=
\prod_{k=1}^\ell \overline{\mathcal{D}_{I_k,\rho_1}}
\times
\prod_{k=\ell+1}^d \overline{\mathcal{D}_{I_k,\rho}}.
\]

Also define the partially interpolated functions
\[
F_0:=f,
\qquad
F_\ell
:=
\mathcal{I}_m^{(\ell)}\mathcal{I}_m^{(\ell-1)}\cdots\mathcal{I}_m^{(1)}f,
\qquad \ell=1,\dots,d.
\]
In particular,
\[
F_d=\mathcal{I}_m^{\otimes d}f.
\]

We first prove, by induction on \(\ell\), that
\begin{equation}
\label{eq:stability}
\sup_{{\bm z}\in K_\ell} |F_\ell|
\leq
(1+\eta_m)^\ell M_{f,\rho},
\qquad \ell=0,\dots,d,
\end{equation}
where $$\eta_m = C_{\rho,\rho_1}\lt(\frac{\rho_1}{\rho}\rt)^J\sqrt{J}$$ 
given in the right hand side of Lemma \ref{lem:least-square-interp} and Corollary \ref{cor:least-square-interp}.
For \(\ell=0\), this is exactly
\[
\sup_{{\bm z}\in K_0} |F_0|
=
\sup_{{\bm z}\in \overline{\mathcal{D}_{I^d,\rho}}} |f|
=
M_{f,\rho}.
\]

Assume that \eqref{eq:stability} holds for \(\ell-1\). Regard all variables
except \(z_\ell\) as passive variables. Since \(F_{\ell-1}\) is holomorphic
in \(z_\ell\) on \(\overline{\mathcal{D}_{I_\ell,\rho}}\), the interpolation error
estimate in Corollary~\ref{cor:least-square-interp} gives
\[
\sup_{{\bm z}\in K_\ell} |F_\ell|
=
\sup_{{\bm z}\in K_\ell} \lt|\mathcal{I}_m^{(\ell)}F_{\ell-1}\rt|
\le
(1+\eta_m) \sup_{{\bm z}\in K_{\ell-1}} |F_{\ell-1}|.
\]
Using the induction hypothesis,
\eqref{eq:stability} is proved for every \(\ell\).

Next, the interpolation estimate in Corollary~\ref{cor:least-square-interp} again applied in
the \(\ell\)-th variable, yields
\begin{equation}
\begin{split}
\sup_{{\bm z}\in K_\ell} |F_{\ell-1}-F_\ell| 
&=
\sup_{{\bm z}\in K_\ell} \lt|F_{\ell-1}-\mathcal{I}_m^{(\ell)}F_{\ell-1}\rt|
\\
&\le
\eta_m \sup_{{\bm z}\in K_{\ell-1}} |F_{\ell-1}| 
\\
&\le
\eta_m(1+\eta_m)^{\ell-1} M_{f,\rho}.
\end{split}
\end{equation}

By telescoping and noticing $K_d\subset K_\ell$ for any $1\leq \ell\leq d$, we obtain
\begin{equation}
\begin{split}
\sup_{{\bm z}\in K_d} \lt|f-\mathcal{I}_m^{\otimes d}f \rt|
&\le
\sum_{\ell=1}^{d}
\sup_{{\bm z}\in K_d} |F_{\ell-1}-F_\ell| \\
&\le
M_{f,\rho}
\sum_{\ell=1}^{d}
\eta_m(1+\eta_m)^{\ell-1} \\
&=
M_{f,\rho}\lt( (1+\eta_m)^d-1 \rt)  \\ 
&\leq C_{d,\rho,\rho_1} M_{f,\rho} \eta_m = C_{d,\rho,\rho_1} M_{f,\rho} \lt(\frac{\rho_1}{\rho}\rt)^J \sqrt{J}.
\end{split}
\end{equation}
\end{proof}

\begin{lemma}\label{lem:legendre-coeff-bound}
    For Legendre polynomials $\{p_i\}_{i=0}^\infty$ on $[-1,1]$:
    $$\int_{-1}^1 p_i(x)\,p_j(x)\,dx = \delta_{ij},$$
    suppose $$p_k(x) = \sum_{j=0}^k p_{k,j} \,x^j,$$ 
    then there exists $\Lambda >4$ such that
    $$|p_{k,j}|\lesssim \Lambda^k,\quad\forall \,0\leq j\leq k,$$
    where the hidden constant does not depend on $k,j$.
\end{lemma}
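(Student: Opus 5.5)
The plan is to use the explicit Rodrigues-type formula for the orthonormal Legendre polynomials and bound every coefficient crudely. Write $p_k=\sqrt{\tfrac{2k+1}{2}}\,P_k$, where $P_k$ is the classical Legendre polynomial normalized by $P_k(1)=1$. Rodrigues' formula gives
\[
P_k(x)=\frac{1}{2^k k!}\frac{d^k}{dx^k}(x^2-1)^k .
\]
Expanding $(x^2-1)^k=\sum_{i=0}^k\binom{k}{i}(-1)^{k-i}x^{2i}$ and differentiating $k$ times, the coefficient of $x^j$ in $P_k$ is nonzero only when $j=2i-k$. In that case it equals
\[
\frac{(-1)^{k-i}}{2^k k!}\binom{k}{i}\frac{(2i)!}{(2i-k)!}
=\frac{(-1)^{k-i}}{2^k}\binom{k}{i}\binom{2i}{k}.
\]

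Next, bound the binomials by powers of two: $\binom{k}{i}\le 2^k$ and $\binom{2i}{k}\le 2^{2i}\le 4^k$. This gives $|P_{k,j}|\le 2^{-k}\,2^k\,4^k=4^k$ uniformly in $j$.

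Finally, the normalization factor satisfies $\sqrt{(2k+1)/2}\le C\,(1+\epsilon)^k$ for any $\epsilon>0$, with a constant $C$ depending only on $\epsilon$. Hence $|p_{k,j}|\lesssim 4^k(1+\epsilon)^k$. Choosing, say, $\Lambda=5>4$ gives $|p_{k,j}|\lesssim\Lambda^k$ with a hidden constant independent of $k$ and $j$, since $\sqrt{(2k+1)/2}\,(4/5)^k$ is bounded.

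No step is a real obstacle. The only care needed is correct bookkeeping of the coefficient formula, namely which power of $x$ survives the $k$-fold differentiation. The polynomial factor from the orthonormalization is what forces $\Lambda$ to be strictly larger than $4$.

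If one prefers to avoid Rodrigues' formula, an alternative is the three-term recurrence $(k+1)P_{k+1}=(2k+1)xP_k-kP_{k-1}$. It shows by induction that the $\ell^1$ norm of the coefficient vector satisfies $S_{k+1}\le \tfrac{2k+1}{k+1}S_k+\tfrac{k}{k+1}S_{k-1}\le 2S_k+S_{k-1}$. This yields $S_k\lesssim(1+\sqrt2)^k$, which again is dominated by $\Lambda^k$ after absorbing the normalization.
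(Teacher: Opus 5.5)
Your proof is correct and follows the paper's argument. Rodrigues' formula gives the coefficient $2^{-k}\sqrt{(2k+1)/2}\,(-1)^{k-i}\binom{k}{i}\binom{2i}{k}$, where the paper's index $j$ is your $k-i$; both proofs then bound the binomials by $2^k$ and $4^k$ and absorb the polynomial normalization factor into $\Lambda>4$, and your bookkeeping is slightly cleaner, since the paper bounds $\sqrt{(2k+1)/2}$ crudely by $k$ and leaves $\binom{2k-2j}{k}\le 4^k$ implicit.
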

\begin{proof}
    From Rodrigues formula \cite[Chapter 4.3]{szego1975orthogonal}, one can obtain that
    $$ p_{k,k-2j} = 2^{-k}\sqrt{\frac{2k+1}{2}}(-1)^j \binom{k}{j}\binom{2k-2j}{k}. $$
    By the simple estimate $\binom{k}{j}\leq 2^k$ we obtain
    $$ |p_{k,k-2j}| \leq k 2^{-k}2^k 2^{2k}\lesssim \Lambda^k, $$
    by choosing $\Lambda>4$.
\end{proof}

By affine transformation, the coefficients of Legendre polynomials on an arbitrary interval satisfy a similar estimate.

\begin{corollary}\label{cor:legendre-coeff-bound}
    For Legendre polynomials $\{p_i^I\}_{i=0}^\infty$ on interval $I$:
    $$\int_I p_i^I(x)\,p_j^I(x)\,dx = \delta_{ij},$$
    suppose $$p_k^I(x) = \sum_{j=0}^k p^I_{k,j} \,x^j,$$ 
    then there exists $\Lambda_I>0 $ such that
    $$|p^I_{k,j}|\lesssim \Lambda_I^k,\quad\forall \,0\leq j\leq k,$$
    where the hidden constant does not depend on $k,j$.
\end{corollary}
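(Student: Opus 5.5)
The plan is to transport Lemma~\ref{lem:legendre-coeff-bound} to $I$ by an affine change of variables and then expand the resulting powers binomially. Write $I=[\alpha,\beta]$, with midpoint $c=(\alpha+\beta)/2$ and half-length $h=(\beta-\alpha)/2$. Set $t=(x-c)/h$, which maps $I$ onto $[-1,1]$. Since $dx=h\,dt$, the family
$$p_k^I(x)=h^{-1/2}p_k\Bigl(\frac{x-c}{h}\Bigr)$$
is orthonormal on $I$ and has degree $k$. This agrees with the Legendre family on $I$ up to the usual sign normalization, and a sign change does not affect the bound.

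Next I expand using $p_k(t)=\sum_{i=0}^k p_{k,i}t^i$ and the binomial theorem:
$$p_k^I(x)=h^{-1/2}\sum_{i=0}^k p_{k,i}h^{-i}\sum_{j=0}^i\binom{i}{j}x^j(-c)^{i-j}.$$
Reading off the coefficient of $x^j$ gives
$$p^I_{k,j}=h^{-1/2}\sum_{i=j}^k p_{k,i}\binom{i}{j}(-c)^{i-j}h^{-i}.$$

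Now I bound each factor. Let $A=\max\{1,|c|\}$ and $B=\max\{1,h^{-1}\}$. Lemma~\ref{lem:legendre-coeff-bound} gives $|p_{k,i}|\lesssim\Lambda^k$. We also have $\binom{i}{j}\le 2^k$, $|c|^{i-j}\le A^k$ and $h^{-i}\le B^k$, and there are at most $k+1$ terms in the sum. Together these give
$$|p^I_{k,j}|\lesssim h^{-1/2}(k+1)(2\Lambda AB)^k.$$
Absorbing the factor $k+1\le 2^k$, we may take $\Lambda_I=4\Lambda AB$. The hidden constant is then $h^{-1/2}$ times the constant from Lemma~\ref{lem:legendre-coeff-bound}, and it depends only on $I$, not on $k$ or $j$.

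The only mildly delicate point is making sure that the $k$-dependence of the powers of $c$ and $h$ is collected into the geometric factor $\Lambda_I^k$ rather than into the constant. Using the maxima with $1$ handles the cases $|c|<1$ and $h>1$ uniformly, so no real obstacle is expected.
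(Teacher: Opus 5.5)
Your proof is correct and follows the paper's route. The paper justifies this corollary only with the remark that an affine change of variables transfers Lemma~\ref{lem:legendre-coeff-bound} to $I$; your binomial expansion, with the bounds $\binom{i}{j}\le 2^k$, $|c|^{i-j}\le A^k$, $h^{-i}\le B^k$ and $k+1\le 2^k$, supplies exactly the details that remark leaves implicit.
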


The corresponding coefficient estimate on an arbitrary rectangle in $\RR^d$ is an immediate consequence of Corollary \ref{cor:legendre-coeff-bound}. 

\begin{corollary}\label{cor:rect-legendre-coeff-bound}
    For any rectangle $I^d = \prod_{k=1}^d I_k \subset \RR^d$, define
    \begin{equation}
        p_\balpha(\bx) = \prod_{k=1}^d p_{\alpha_k}^{I_k}(x_k).
    \end{equation}
    Then $\{p_\balpha(\bx)\}_{0\leq \alpha_1,\ldots,\alpha_d<\infty}$ can be chosen as the Legendre orthogonal polynomials on $I^d$. Suppose 
    $$ p_\balpha(\bx) = \sum_{0\leq \bbeta \leq \balpha} p_{\balpha,\bbeta}\,\bx^\bbeta, $$
    where $0\leq \bbeta \leq \balpha$ means $0\leq \beta_k\leq \alpha_k$ for any $k=1,2,\ldots,d$, and $\bx^\bbeta:=x_1^{\beta_1}\cdots x_d^{\beta_d}$.
    Then there exists $\Lambda_{I^d}$ such that 
    $$|p_{\balpha,\bbeta}|\lesssim \Lambda_{I^d}^{|\balpha|},\quad\forall \,0\leq \bbeta\leq\balpha,$$
    where $|\balpha|=\sum_{k=1}^d \alpha_k$, and the hidden constant does not depend on $\balpha,\bbeta$.
\end{corollary}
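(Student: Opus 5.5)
The product structure should reduce everything to Corollary~\ref{cor:legendre-coeff-bound}. Take the one-dimensional orthonormal Legendre polynomials $p^{I_k}_{\alpha_k}$ on each interval $I_k$, and set $p_\balpha(\bx)=\prod_{k=1}^d p^{I_k}_{\alpha_k}(x_k)$.

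First I check orthonormality on $I^d$. By Fubini,
\[
\int_{I^d}p_\balpha p_{\balpha'}\,d\bx=\prod_{k=1}^d\int_{I_k}p^{I_k}_{\alpha_k}p^{I_k}_{\alpha'_k}\,dx_k=\prod_{k=1}^d\delta_{\alpha_k\alpha'_k}=\delta_{\balpha\balpha'}.
\]
Each $p_\balpha$ has coordinate degree $\alpha_k$ in $x_k$. The products $\{\prod_k x_k^{\beta_k}\}$ span all polynomials, and the one-dimensional families are triangular bases. Hence the $p_\balpha$ form a complete orthonormal polynomial basis of tensor (Legendre) type, which justifies the phrase ``can be chosen as the Legendre orthogonal polynomials on $I^d$''.

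Next I expand the product. Writing $p^{I_k}_{\alpha_k}(x_k)=\sum_{\beta_k=0}^{\alpha_k}p^{I_k}_{\alpha_k,\beta_k}x_k^{\beta_k}$ and multiplying out gives
\[
p_\balpha(\bx)=\sum_{0\le\bbeta\le\balpha}\Bigl(\prod_{k=1}^d p^{I_k}_{\alpha_k,\beta_k}\Bigr)\bx^\bbeta .
\]
Monomials are linearly independent, so the coefficients are unique and $p_{\balpha,\bbeta}=\prod_k p^{I_k}_{\alpha_k,\beta_k}$.

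Finally I bound the coefficients. Corollary~\ref{cor:legendre-coeff-bound} gives constants $C_k$ and $\Lambda_{I_k}$ with $|p^{I_k}_{\alpha_k,\beta_k}|\le C_k\Lambda_{I_k}^{\alpha_k}$. Set $\Lambda_{I^d}:=\max\{1,\max_k\Lambda_{I_k}\}$. Then
\[
|p_{\balpha,\bbeta}|\le\Bigl(\prod_k C_k\Bigr)\Lambda_{I^d}^{|\balpha|}.
\]
The prefactor depends only on the intervals and on $d$, not on $\balpha$ or $\bbeta$.

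No step is a real obstacle. The one point needing care is that Corollary~\ref{cor:legendre-coeff-bound} be valid uniformly in $k$ and $j$, including $j=0$ and $k=0$. That uniformity follows from the affine change of variables: a factor $(2/|I|)^{j}(\text{shift})^{\cdot}$ adds at most a geometric factor $C^k$, which is absorbed into $\Lambda_I^k$.
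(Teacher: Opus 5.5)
Your proposal is correct and follows the paper's proof: you identify $p_{\balpha,\bbeta}=\prod_{k=1}^d p^{I_k}_{\alpha_k,\beta_k}$ and apply Corollary~\ref{cor:legendre-coeff-bound} factorwise, with $\Lambda_{I^d}$ taken as the maximum of the $\Lambda_{I_k}$. The Fubini orthonormality check, the uniqueness of the monomial coefficients, and the extra $\max\{1,\cdot\}$ are harmless details that the paper leaves implicit.
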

\begin{proof}
    Noticing that $$ p_{\balpha,\bbeta} = \prod_{k=1}^d p_{\alpha_k,\beta_k}^{I_k},$$
    take $\Lambda_{I^d} = \max_{1\leq k\leq d} \Lambda_{I_k}$, Corollary \ref{cor:legendre-coeff-bound} then yields
    $$ |p_{\balpha,\bbeta}| \lesssim \prod_{k=1}^d \Lambda_{I_k}^{\alpha_k} \leq \Lambda_{I^d}^{|\balpha|}.$$
\end{proof}

\subsection{Proof of Theorem \ref{thm_1d_analytic}}
\begin{proof}
    Let $E_\Omega$ be a bounded extension operator for $\Omega$. Replacing $f$ by the restriction of $E_\Omega f$ to $I^d$, and retaining the notation $f$, we have
    $$
        \|f\|_{H^r(I^d)}\lesssim \|f\|_{H^r(\Omega)}.
    $$
    It therefore suffices to construct the approximation on $I^d$ and restrict it to $\Omega$ at the end.
    Recall that $n=m^d$. In what follows, we denote 
    $$V_n = {\rm span}\lt\{ \sigma(\bomega_\bj^{(m)}\cdot \bx):1\leq j_1,\ldots,j_d\leq m\rt\},$$ 
    and for any $\balpha = (\alpha_1,\ldots,\alpha_d)\in\NN^d$, denote
    $$
    \balpha! = \alpha_1!\cdots\alpha_d!, \quad
    \partial_\bomega^\balpha = \partial_{\omega_1}^{\alpha_1}\cdots \partial_{\omega_d}^{\alpha_d},\quad 
    \bx^\balpha = x_1^{\alpha_1}\cdots x_d^{\alpha_d}$$

    \textbf{Step 1. Approximation to polynomials.} 
    Fix $\bx\in I^d$, Consider the analytic function 
    $$\sigma_\bx(\bomega):=\sigma(\bomega\cdot\bx), \quad\bomega\in I'^d,$$
    whose derivative with respect to $\bomega$ at $\bomega=0$ can be calculated as
    \begin{equation}\label{eqn:derivative-of-sigmax}
        \partial_\bomega^\balpha \sigma_\bx(0) =  \sigma^{(|\balpha|)}(0)\bx^\balpha,\quad\forall \balpha\in\NN^d,
    \end{equation}
    where $\balpha = (\alpha_1,\ldots,\alpha_d)\in\NN^d$. 
    
    To approximate 
        $\bx^\balpha = \frac{1}{\sigma^{(|\balpha|)}(0)}\partial_\bomega ^\balpha \sigma_\bx(0)$
    using elements in $V_n$, consider the least-squares interpolant $\mathcal I_m ^{\otimes d} \sigma_\bx(\bomega)$ at the tensorized quasi-Chebyshev points $\{\bomega_\bj^{(m)}\}\subset I'^d$: 
    $$\mathcal I_m^{\otimes d} \sigma_\bx(\bomega) = \sum_{1\leq j_1,\ldots,j_d\leq m} \sigma_\bx(\bomega_\bj^{(m)})\,l_\bj^{(m)}(\bomega),$$
    
    Since $\sigma$ is analytic in $\Omega_\sigma$, and recall \eqref{eqn:param-bound}   
    $$
        \lt\{{\bm\omega}\cdot {\bm x}: \forall{\bm\omega}\in I'^d,{\bm x}\in I^d \rt\}\Subset \Omega_\sigma,
    $$
    by compactness, there exists $\rho>1$ such that $\sigma_\bx(\bomega)$ is analytic inside and on the polyellipse $\mathcal{E}_{I'^d,\rho}$ and
    $$
        K_{\sigma,\rho}:=
        \lt\{{\bm\omega}\cdot {\bm x}: {\bm\omega}\in \overline{\mathcal D_{I'^d,\rho}},\ {\bm x}\in I^d \rt\}\Subset \Omega_\sigma.
    $$ 
    
    
    Since $K_{\sigma,\rho}$ is compactly contained in $\Omega_\sigma$,
    \begin{equation}
            M_{\bx,\rho}:= \sup_{\bomega\in\overline{\mathcal{D}_{I'^d,\rho}}} |\sigma_\bx(\bomega)|
            \leq \max_{z\in K_{\sigma,\rho}}|\sigma(z)|=:M_\sigma,
            \quad\forall \bx\in I^d.
    \end{equation}
    Applying Corollary \ref{cor:tensor-interp} to $\sigma_\bx$, and fixing $0<\theta<1$ and $1<\rho_1=\theta\rho<\rho$, gives
    \begin{equation}
        \sup_{\bomega\in \overline{\mathcal{D}_{I'^d,\rho_1}}} |\sigma_\bx(\bomega)-\mathcal I_m^{\otimes d} \sigma_\bx(\bomega)|\leq C_{d,\rho,\rho_1} M_{\sigma}\,\theta^J \sqrt{J},\quad\forall \bx\in I^d.
    \end{equation}

    Similarly, for any $\bbeta\in\NN^d$, the corresponding derivatives are uniformly bounded in $\bx$:
    \begin{equation}
        \sup_{\bomega\in\overline{\mathcal{D}_{I'^d,\rho}}} |\partial_\bx^\bbeta \sigma_\bx(\bomega)|
        = \sup_{\bomega\in\overline{\mathcal{D}_{I'^d,\rho}}} |\bomega^\bbeta \sigma^{(|\bbeta|)}(\bomega\cdot\bx)|
        \leq C_{\sigma,\bbeta},\quad\forall \bx\in I^d,
    \end{equation}
    where $C_{\sigma,\bbeta}$ is finite by compactness. Noticing that 
    $$
    \partial_\bx^\bbeta \lt[\mathcal I_m^{\otimes d} \sigma_\bx \rt](\bomega) =\sum_{1\leq j_1,\ldots,j_d\leq m} \partial_\bx^\bbeta \sigma_\bx(\bomega_\bj^{(m)})\,l_\bj^{(m)}(\bomega)= \mathcal I_m^{\otimes d}(\partial_\bx^\bbeta \sigma_\bx)(\bomega),
    $$ 
    we have the following estimate in the same manner: 
    \begin{equation}
        \sup_{\bomega\in \overline{\mathcal{D}_{I'^d,\rho_1}}}|\partial_\bx^\bbeta(\sigma_\bx(\bomega)-\mathcal I_m^{\otimes d} \sigma_\bx(\bomega))|\leq C_{d,\rho,\rho_1} C_{\sigma,\bbeta}\,\theta^J\sqrt J,\quad\forall \bx\in I^d.
    \end{equation}
    
    By the Cauchy integral formula for derivatives, take $\eta_\sigma>0$ such that 
    $$ B_{\eta_\sigma}^{\otimes d}(0) := \underbrace{B_{\eta_\sigma}(0)\times\cdots\times B_{\eta_\sigma}(0)}_{d \text{ copies}}\subset \mathcal{D}_{I'^d,\rho_1},$$ we have 
    \begin{equation}\label{eqn:deriv-estimate}
        \begin{split}
            \frac{1}{\balpha!}\lt| \partial_\bx^\bbeta \lt[\partial_\bomega^\balpha(\sigma_\bx-\mathcal I_m^{\otimes d}\sigma_\bx)\bigg|_{\bomega=0}\rt] \rt| 
            &= \frac{1}{\balpha!}\lt| \partial_\bomega^\balpha\partial_\bx^\bbeta (\sigma_\bx-\mathcal I_m^{\otimes d}\sigma_\bx)\bigg|_{\bomega=0} \rt| \\
            &= \lt| \frac{1}{(2\pi i)^d}\oint_{|\omega_1|={\eta_\sigma}}\cdots\oint_{|\omega_d|={\eta_\sigma}} \frac{\partial_\bx^\bbeta (\sigma_\bx(\bomega)-\mathcal I_m^{\otimes d} \sigma_\bx(\bomega))}{\bomega^{\balpha+1}}\,d\omega_d\cdots d\omega_1 \rt| \\
            &\leq \frac{1}{(2\pi)^d}\frac{(2\pi \eta_\sigma)^d}{\eta_\sigma^{|\balpha|+d}} \max_{|\omega_k|=\eta_\sigma, k=1,\ldots,d} |\partial_\bx^\bbeta(\sigma_\bx(\bomega)-\mathcal I_m^{\otimes d} \sigma_\bx(\bomega))| \\
            &\leq C_{d,\rho,\rho_1}C_{\sigma,\bbeta}\,\eta_\sigma^{-|\balpha|}\theta^J\sqrt J,\quad\forall \bx\in I^d,
        \end{split}
    \end{equation}
    Thus, there exists
    $$\Psi_\balpha^{(n)}(\bx) = \frac{1}{\sigma^{(|\balpha|)}(0)}\sum_{1\leq j_1,\ldots,j_d\leq m} \partial_\bomega^\balpha l_\bj^{(m)}(0) \, \sigma_\bx(\bomega_\bj^{(m)}) \in V_n.$$
    for any $s\in\NN$, take all $|\bbeta|\leq s$ in \eqref{eqn:deriv-estimate} yields
    \begin{equation}\label{eqn:approx-to-poly-1}
        \lt\|\bx^\balpha - \Psi_\balpha^{(n)}(\bx) \rt\|_{W_s^\infty(I^d)} \lesssim \frac{\balpha!}{|\sigma^{(|\balpha|)}(0)|} \eta_\sigma^{-|\balpha|}\,\theta^J \sqrt J
        \lesssim \frac{\balpha!}{|\balpha|!} (\frac{\rho_\sigma}{\eta_\sigma})^{|\balpha|}\,\theta^J \sqrt J = \frac{\balpha!}{|\balpha|!}(\Gamma_\sigma)^{|\balpha|}\,\theta^J \sqrt J,
    \end{equation}
    where $\Gamma_\sigma = {\rho_\sigma} / \eta_\sigma $, and the hidden constant only depends on $\sigma,s,d,\rho,\rho_1$.

    \textbf{Step 2: Approximation to general Sobolev functions.}
    By the classical Jackson estimate for algebraic polynomials on a rectangle (see, e.g., \cite{devore1993constructive}), for any $f\in H^r(I^d)$ there exists a polynomial $f_\ell\in \mathcal{P}_\ell(I^d)$ of total degree at most $\ell$ such that 
    \begin{equation}\label{eqn:poly-approx-1}
        \|f-f_\ell\|_{H^s(I^d)}\lesssim \ell^{-(r-s)}\|f\|_{H^r(I^d)},\quad\forall\, 0\leq s\leq r.
    \end{equation}
    
    Take $\ell=\lceil\varepsilon m\rceil$, where $\varepsilon\in(0,1)$ is to be determined. 

    It remains to approximate the polynomial $f_\ell$. Consider the Legendre orthonormal polynomials $\{p_\balpha\}_{\balpha\geq 0}$ on $I^d$ defined in Corollary \ref{cor:rect-legendre-coeff-bound}.
    Suppose $f_\ell$ has the following Legendre polynomial expansion:
    $$ f_\ell(\bx) = \sum_{|\balpha|\leq \ell} \widehat{f_\ell}(\balpha)\, p_\balpha(\bx), $$
    satisfying 
    $$ \sum_{|\balpha|\leq \ell} |\widehat{f_\ell}(\balpha)|^2 = \|f_\ell\|_{L^2(I^d)}^2\leq 2\|f-f_\ell\|_{L^2(I^d)}^2 + 2\|f\|_{L^2(I^d)}^2 \lesssim \|f\|_{H^r(I^d)}^2.$$

    Suppose $$p_\balpha(\bx) = \sum_{0\leq\bbeta\leq\balpha} p_{\balpha,\bbeta} \,\bx^\bbeta,$$ and define
    $$ \widetilde{p}_\balpha^{(n)}(\bx) = \sum_{0\leq \bbeta \leq \balpha} p_{\balpha,\bbeta} \Psi_\bbeta^{(n)}(\bx).$$
    From Corollary \ref{cor:rect-legendre-coeff-bound} and \eqref{eqn:approx-to-poly-1} we have:
    \begin{equation}
    \begin{split}
        \|p_\balpha - \widetilde{p}_\balpha^{(n)}\|_{W^\infty_s(I^d)} 
        &\leq \sum_{0\leq \bbeta \leq \balpha} |p_{\balpha,\bbeta}| \lt\|\bx^\bbeta - \Psi_\bbeta^{(n)}(\bx)\rt\|_{W^\infty_s(I^d)} \\
        &\lesssim \sum_{0\leq \bbeta \leq \balpha} \Lambda_{I^d}^{|\balpha|}\, (\Gamma_\sigma)^{|\bbeta|}\,\theta^J \sqrt J 
        \lesssim \Upsilon^{|\balpha|} \theta^J\sqrt J,
    \end{split}
    \end{equation}
    where we can take $\Upsilon=\Lambda_{I^d}\max\{2,\Gamma_\sigma\}$. Define 
    $$\widetilde{f}_\ell^{(n)}(\bx) = \sum_{|\balpha|\leq \ell} \widehat{f_\ell}(\balpha) \,\widetilde p_\balpha^{(n)}(\bx)\in V_n,$$
    we can obtain its approximation error with $f_\ell$ is
    \begin{equation}
    \begin{split}
         \|f_\ell-\widetilde{f}_\ell^{(n)}\|_{H^s(I^d)} 
         &\lesssim \sum_{|\balpha|\leq \ell} |\widehat{f_\ell}(\balpha)|\,\|p_\balpha - \widetilde{p}_\balpha^{(n)}\|_{W^\infty_s(I^d)} \\
         &\lesssim \lt(\sum_{|\balpha|\leq \ell} |\widehat{f_\ell}(\balpha)|^2 \rt)^{1/2} \lt(\sum_{|\balpha|\leq \ell} \Upsilon^{2|\balpha|}\theta^{2J} J \rt)^{1/2} \\
         &\leq \binom{\ell+d}{d}^{1/2} \Upsilon^\ell \theta^J \sqrt J\, \|f\|_{H^r(I^d)} \\
         &\lesssim (\Upsilon^\varepsilon\theta^{C_2})^m \, m^{\frac{d+1}{2}}\, \|f\|_{H^r(I^d)}.
    \end{split}
    \end{equation}

    By choosing $\varepsilon$ sufficient small such that $\tilde\theta:=\Upsilon^\varepsilon\theta^{C_2} < 1$, we obtain 
    \begin{equation}\label{eqn:approx-to-fell}
        \|f_\ell-\widetilde{f}_\ell^{(n)}\|_{H^s(I^d)} \lesssim \tilde\theta^m\, m^{\frac{d+1}{2}}\,\|f\|_{H^r(I^d)}.
    \end{equation} 
    Combining \eqref{eqn:approx-to-fell} with \eqref{eqn:poly-approx-1}, we obtain $\widetilde{f}_\ell^{(n)}(\bx)\in V_n$, such that for any $s\in \NN$ and $0\leq s\leq r$, 
    $$\|f-\widetilde{f}_\ell^{(n)}\|_{H^s(I^d)}\leq \|f-f_\ell\|_{H^s(I^d)} + \|f_\ell -\widetilde{f}_\ell^{(n)}\|_{H^s(I^d)} \lesssim m^{-(r-s)}\|f\|_{H^r(I^d)}=n^{-\frac{r-s}{d}}\|f\|_{H^r(I^d)}. $$
    The construction above also gives the exponentially small polynomial-reproduction error at the neighboring integer Sobolev orders. Sobolev interpolation therefore yields the same estimate for fractional $s\in[0,r]$. Finally, restricting the approximant to $\Omega$ and using the boundedness of $E_\Omega$ gives \eqref{eqn:1d-analytic-approx} and completes the proof.
\end{proof}

\begin{remark}
    The assumption cannot hold for an entire function, because Cauchy's integral formula shows that the coefficients $\frac{1}{k!}\sigma^{(k)}(0)$ decay faster than any prescribed exponential rate:
    $$
    \lt|\frac{1}{k!}\sigma^{(k)}(0)\rt| = \lt|\frac{1}{2\pi i} \int_{|z|=R} \frac{\sigma(z)}{z^{k+1}}\,dz\rt| 
    \leq R^{-k}\max_{|z|=R} |\sigma(z)|, \quad \forall R>0.
    $$
\end{remark}

\subsection{Proof of Theorem \ref{thm_main_1d}}
\begin{proof}
    For any bounded interval $I$, denote $$l_I:=\inf I,\quad u_I:=\sup I.$$ 
    Without loss of generality, we may assume $x_0^{(m)}<\cdots<x_m^{(m)}$. 
    Since $\{x_i^{(m)}\}_{i=0}^m$ quasi-Chebyshev, by Lemma \ref{lem:transform-cheby} and \ref{lem:rescale-cheby}, we can write 
    $$e^{2x_i^{(m)}} = s_0^{(m)} + \Delta_m\delta_i^{(m)},\quad i=0,1,\ldots,m,$$ 
    where $\{\delta_i^{(m)}\}_{i=0}^m$ quasi-Chebyshev on $[0,1]$, $$\delta_0^{(m)}=0,\quad s_0^{(m)}=e^{2x_0^{(m)}}\to e^{2l_{I'}}, \quad\Delta_m = e^{2u_{I'}}-e^{2x_0^{(m)}}\to e^{2u_{I'}}-e^{2l_{I'}}.$$ 
    In fact, quasi-Chebyshev spacing at the endpoint gives $x_0^{(m)}-l_{I'}=O(m^{-2})$. Hence, with
    $$
        \Delta:=e^{2u_{I'}}-e^{2l_{I'}},
    $$
    we also have $0<\Delta_m\leq\Delta$ and $|\Delta_m-\Delta|=O(m^{-2})$.
    
    Since $\tanh(x-x_i^{(m)})$ can be written as
    $$\tanh(x-x_i^{(m)}) = \frac{e^{2x}-e^{2x_i^{(m)}}}{e^{2x}+e^{2x_i^{(m)}}}, $$
    we have

    $$\tanh(x-x_i^{(m)}) = \frac{e^{2x}-s_0^{(m)} - \Delta_m\delta_i^{(m)}}{e^{2x}+s_0^{(m)} + \Delta_m\delta_i^{(m)}}, $$
    therefore we can construct a $C^\infty$-diffeomorphism 
    \begin{align*}
        \mathcal{F}_m \colon I &\to I_m = \lt[\frac{1}{s_0^{(m)}+e^{2u_{I}}},\frac{1}{s_0^{(m)}+e^{2l_{I}}} \rt] \\
        x &\mapsto t = \frac{1}{s_0^{(m)}+e^{2x}}.
    \end{align*}
    The maps \(\mathcal F_m:I\to I_m\) are \(C^\infty\)-diffeomorphisms with uniformly bounded \(C^k\)-norms, as are their inverses, for every fixed \(k\geq 0\). The standard boundedness of pullbacks under uniformly smooth one-dimensional diffeomorphisms therefore makes the original approximation problem equivalent to the following one: 
    \begin{equation}
        \lt\|g - \sum_{i=0}^m a_i\phi_i^{(m)}\rt\|_{H^s(I_m)}\lesssim m^{-(r-s)} \|g\|_{H^r(I_m)}, \quad\forall g\in H^r(I_m),
    \end{equation}
    where
    $$\phi_i^{(m)}(t) = \frac{1-(2s_0^{(m)} + \Delta_m\delta_i^{(m)})t}{1+\Delta_m\delta_i^{(m)}t}.$$
    Put $s_\ast=e^{2l_{I'}}$. The limiting interval
    $$
        \lt[\frac{1}{s_\ast+e^{2u_I}},\frac{1}{s_\ast+e^{2l_I}}\rt]
    $$
    is compactly contained in $(0,1/s_\ast)$. Since $s_0^{(m)}\to s_\ast$, there are $m_0$ and a fixed interval $\tilde I$ such that, for every $m\geq m_0$,
    $$
        I_m\Subset\tilde I\Subset(0,\infty),
        \qquad
        \operatorname{dist}\lt(\tilde I,\lt\{0,\frac{1}{s_0^{(m)}}\rt\}\rt)\geq c>0.
    $$
    The intervals $I_m$ also have lengths bounded away from zero. By the Sobolev extension theorem \cite{adams2003sobolev}, the extension maps from $H^r(I_m)$ to $H^r(\tilde I)$ may therefore be chosen with norms bounded uniformly for $m\geq m_0$. It suffices to prove the following approximation result on $\tilde I$; the finitely many cases $m<m_0$ are absorbed by increasing the implicit constant:
    \begin{equation}\label{eqn:equivalent-approx}
        \inf_{g_m\in V_m} \lt\|g - g_m\rt\|_{H^s(\tilde I)}\lesssim m^{-(r-s)} \|g\|_{H^r(\tilde I)}, \quad\forall g\in H^r(\tilde I),
    \end{equation}
    where we denote $$V_m = {\rm span}\{\phi_i^{(m)}:i=0,1,\ldots,m\}.$$ 

    We can calculate that for all $i\neq 0$,  $$\phi_i^{(m)}-\phi_0^{(m)} = \frac{2\Delta_m\delta_i^{(m)}t (s_0^{(m)} t-1)}{1+\Delta_m\delta_i^{(m)}t} = \frac{2\Delta_m\delta_i^{(m)}}{1+\Delta_m\delta_i^{(m)}t}\beta_m(t),$$
    where $\beta_m(t)=t(s_0^{(m)} t-1)$. Therefore, 
    \begin{equation}\label{eqn:space-inclusion}
        V_m\supset \beta_m V_m',\quad\text{where }\, V_m' = {\rm span} \lt\{\frac{1}{1+\Delta_m\delta_i^{(m)}t}:i=1,2,\ldots,m\rt\}. 
    \end{equation}
    
    The separation property of $\tilde I$ shows that $\beta_m(t)$ and $\beta_m^{-1}(t)$ have uniformly bounded \(C^k\)-norms on $\tilde I$, for every fixed \(k\geq 0\). The Sobolev multiplier theorem therefore gives
    \begin{equation}\label{eqn:norm-equivalence}
        \|g\|_{H^s(\tilde I)} \simeq \|\beta_m g\|_{H^s(\tilde I)},\quad \forall g\in H^s(\tilde I)
    \end{equation}
    where the hidden constant only depends on $s$, independent of $m$.
    
    From \eqref{eqn:norm-equivalence} and \eqref{eqn:space-inclusion}, we obtain that, to prove \eqref{eqn:equivalent-approx}, it suffices to apply the following estimate to $g/\beta_m$:
    \begin{equation}\label{eqn:equivalent-approx-2}
        \inf_{g_m\in V_m'} \lt\|g - g_m\rt\|_{H^s(\tilde I)}\lesssim m^{-(r-s)} \|g\|_{H^r(\tilde I)}, \quad\forall g\in H^r(\tilde I).
    \end{equation}
    Take $\sigma(t) = \frac{1}{1+t}$ and $\{\omega_i^{(m)} = \Delta_m\delta_i^{(m)}\}_{i=1}^m$, which is quasi-Chebyshev on $\tilde I'=[0,\Delta]$ by Lemma \ref{lem:limit-cheby}. It is immediate that 
    $$\frac{1}{k!}|\sigma^{(k)}(0)| = 1,$$
    $$ 0\in \tilde I', \quad \{\omega x:\forall \omega\in \tilde I',x\in \tilde I\}\Subset \Omega_\sigma,$$
    here, for example, we can take
    $$\Omega_\sigma := \lt\{z: {\rm Re}\,z\in (-1/2,1+\Delta\sup\tilde I),{\rm Im}\,z\in (-1,1)\rt\}.$$
    Theorem \ref{thm_1d_analytic} then yields \eqref{eqn:equivalent-approx-2}, which completes the proof.
\end{proof}

\subsection{Proof of Theorem \ref{thm_assumption_general}}
    We begin with the asymptotic behavior of the sequence $\frac{1}{k!}\sigma^{(k)}(0)$. Suppose that a real-analytic function $\sigma$ has a pair of simple conjugate poles as its nearest singularities to the origin,
    \[
    p_\ast=\rho_\ast e^{i\theta_\ast},
    \qquad
    \overline{p_\ast}=\rho_\ast e^{-i\theta_\ast},
    \]
    and that there are no other singularities in \(|z|\leq R\), where \(R>\rho_\ast\). The conjugate symmetry follows from the reality of \(\sigma\) on the real axis.
    
    By Cauchy's integral formula and the residue theorem, we have
    \begin{equation}
    \begin{split}
        \frac{1}{k!} \sigma^{(k)}(0) &= -2{\rm Re}\bigg({\rm Res}(\sigma,\rho_\ast e^{i\theta_\ast})e^{-i(k+1)\theta_\ast}\bigg)\rho_\ast^{-(k+1)} + O(R^{-k})\\
        &=-2c_0\,\rho_\ast^{-(k+1)}\cos((k+1)\theta_\ast - \varphi) + O(R^{-k}),
    \end{split}
    \end{equation}
    where $c_0>0$ and the phase $\varphi\in\RR$ are defined by
    \[
    \operatorname{Res}(\sigma,p_\ast)=c_0 e^{i\varphi}.
    \]
    
    We now impose a quantitative non-cancellation condition on the oscillatory factor. Let
    \[
    \alpha=\frac{\theta_\ast}{\pi},
    \qquad
    \beta=\frac{\varphi}{\pi}-\frac12.
    \]
    \begin{definition}[Diophantine condition]\label{def:Diophantine}
        We say that $\alpha$ satisfies an \emph{Diophantine condition} with shift $\beta$ if there exist constants $\gamma>0$ and $\tau>0$ such that
        \begin{equation}\label{eqn:inhomo-Diophantine}
        \min_{\nu\in\mathbb Z}|m\alpha - \beta-\nu|
        \geq \gamma m^{-\tau},
        \qquad \forall m\in\mathbb N_+.
        \end{equation}
    \end{definition}
    We use the term in the standard sense of Diophantine approximation; see, for instance, \cite{Cassels1957,BugeaudLaurent2005}.
    
    If \eqref{eqn:inhomo-Diophantine} holds, then the cosine factor cannot approach its zeros too rapidly. Indeed, 
    \begin{equation}
    \begin{split}
        \rho_\ast^{-k-1}|\cos((k+1)\theta_\ast - \varphi)|
        &=\rho_\ast^{-k-1}\big|\sin\big(\pi((k+1)\alpha-\beta)\big)\big|\\
        &\geq 2\rho_\ast^{-k-1}\min_{\nu\in\ZZ}|(k+1)\alpha-\beta-\nu|\\
        &\gtrsim \rho_\ast^{-k-1}(k+1)^{-\tau}
        \gtrsim (\rho_\ast+\varepsilon)^{-k-1}.
    \end{split}
    \end{equation}
    By taking $\varepsilon$ small enough that $\rho_\ast+\varepsilon < R$, \eqref{eqn:derivative-bound} holds for all sufficiently large \(k\).

    \begin{proof}[Proof of Theorem \ref{thm_assumption_general}]
        We first prove that the Diophantine condition is generic under real shifts. Recall that
    $$
    \sigma_t(x)=\sigma(x+t),
    $$
    and for $t\in I$, where $I$ is a finite interval, the same conjugate pair of poles $p_\ast,\overline{p_\ast}$ of $\sigma$ gives the nearest singularities $p_\ast-t,\overline{p_\ast}-t$ of $\sigma_t$. Writing $p_\ast=a+ib$ with $b>0$, the corresponding pole of $\sigma_t$ is
    \[
    p_\ast-t=(a-t)+ib = 
    \rho_\ast(t)e^{i\theta_\ast(t)}.
    \]
    Thus
    \[
    \theta_\ast(t)=\arg((a-t)+ib),
    \]
    and
    \[
    \frac{d}{dt}\theta_\ast(t)
    =
    \frac{b}{(a-t)^2+b^2}
    > 0.
    \]
    Consequently, $t\mapsto \alpha(t)=\theta_\ast(t)/\pi$ is a $C^1$-diffeomorphism onto its image. For any fixed shift \(\beta\), the set of \(\alpha\) failing to satisfy a Diophantine condition has Lebesgue measure zero. Indeed, fix \(\tau>1\) and, on the bounded interval $\alpha(I)$, consider the sets
    \[
    E_m = 
    \left\{
    \alpha\in\alpha(I):\min_{\nu\in\mathbb Z}|m\alpha-\beta-\nu|<m^{-\tau}
    \right\}
    \]
    satisfy
    \[
    |E_m|\lesssim m^{-\tau},
    \]
    and hence \(\sum_m |E_m|<\infty\). By the Borel--Cantelli lemma, for almost every \(\alpha\), only finitely many of these inequalities occur. Excluding also the countable set where \(m\alpha-\beta\in\mathbb Z\) for some \(m\), we can choose \(\gamma>0\) so that
    \[
    \min_{\nu\in\mathbb Z}|m\alpha-\beta-\nu|
    \geq
    \gamma m^{-\tau},
    \qquad \forall m\geq 1.
    \]
    Therefore, for almost every \(t\in I\) (say for $t\in I_1$, with $I\setminus I_1$ of measure zero), the shifted function \(\sigma_t(x)=\sigma(x+t)\) has \(\theta_\ast(t)\) satisfying \eqref{eqn:inhomo-Diophantine}, which yields assumption \eqref{eqn:derivative-bound} for all sufficiently large $k$.

    It remains to show that, for almost every $t\in I_1$,
    $$\sigma_t^{(k)}(0)=\sigma^{(k)}(t)\neq 0,\quad \forall k\in\NN.$$
    Each $\sigma^{(k)}$ is analytic and nonzero as a function under the pole assumption, so its zeros on the real axis are isolated. Therefore
    $$ I_2 = \bigcup_{k=0}^\infty \{t\in I: \sigma^{(k)}(t) = 0\}$$
    is countable and has measure zero. For $t\in I_1\setminus I_2$, the large-$k$ lower bound and the finitely many nonzero low-order coefficients can be combined, after decreasing the constant and enlarging $\rho_t$ if necessary, to give \eqref{eqn:derivative-bound} for every $k\in\NN$. This proves Theorem \ref{thm_assumption_general}.
    \end{proof}

\subsection{Proof of Theorem \ref{thm_assumption_verify}}
    We use the following Diophantine property of algebraic numbers.
    \begin{lemma}[{\cite[Theorem 1.1]{ferreira2026arithmeticpropertiesargumentsalgebraic}}] \label{lem:algebraic-Diophantine}
        Let $e^{i\theta}$ be an algebraic number which is not a root of unity. Then $\theta/ 2\pi \in (-1/2,1/2]$ satisfies Diophantine condition with shift $0$.
    \end{lemma}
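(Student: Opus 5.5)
The plan is to deduce the lemma from lower bounds for linear forms in logarithms of algebraic numbers, specifically the two-logarithm case due to Gelfond and Baker. The Liouville inequality is not enough here. It bounds $|\xi^m-1|$ from below, but only by a quantity like $H(\xi)^{-cm}$ (with $H$ the height), because the height of $\xi^m$ grows exponentially in $m$. That is an exponential lower bound, while the Diophantine condition of Definition~\ref{def:Diophantine} requires a polynomial one.

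\textbf{Step 1: reduce to a linear form in logarithms.} Write $\xi=e^{i\theta}$ with $\alpha=\theta/2\pi\in(-1/2,1/2]$, and fix the principal logarithms $\log\xi=i\theta$ and $\log(-1)=i\pi$. For $m\in\NN_+$ and $\nu\in\ZZ$ put
\[
\Lambda_{m,\nu}:=m\log\xi-2\nu\log(-1)=i\,(m\theta-2\pi\nu)=2\pi i\,(m\alpha-\nu).
\]
Then $|m\alpha-\nu|=|\Lambda_{m,\nu}|/(2\pi)$, so it suffices to bound $|\Lambda_{m,\nu}|$ from below. Only $\nu$ with $|\nu-m\alpha|\le 1/2$ matter for the minimum, and for those $|\nu|\le m$. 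Hence the integer coefficients $(m,-2\nu)$ are bounded in absolute value by $B:=2m$.

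\textbf{Step 2: nonvanishing.} Suppose $\Lambda_{m,\nu}=0$ for some $m\ge1$. Exponentiating gives $\xi^m=(-1)^{2\nu}=1$, so $\xi$ would be a root of unity, contrary to the hypothesis. Hence $\Lambda_{m,\nu}\neq0$ for all admissible $(m,\nu)$.

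\textbf{Step 3: apply Baker's theorem.} I would invoke Baker's theorem in the Baker--W\"ustholz form for $n=2$ logarithms. It states that for algebraic $\alpha_1,\alpha_2$ and integers $b_1,b_2$ with $\Lambda=b_1\log\alpha_1+b_2\log\alpha_2\neq0$,
\[
|\Lambda|\ge \exp\bigl(-C(\alpha_1,\alpha_2)\,\log(e B)\bigr),
\]
where $B=\max|b_i|$ and $C$ depends only on the degree of $\mathbb Q(\alpha_1,\alpha_2)$ and on the heights of $\alpha_1,\alpha_2$. Taking $\alpha_1=\xi$ and $\alpha_2=-1$ gives $|\Lambda_{m,\nu}|\ge (2em)^{-C}$. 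Combined with Step 1, this yields
\[
\min_{\nu\in\ZZ}|m\alpha-\nu|\ge \gamma\, m^{-\tau}
\]
with $\tau=C$ and $\gamma=(2e)^{-C}/(2\pi)$, uniformly in $m\ge1$. This is exactly \eqref{eqn:inhomo-Diophantine} with shift $\beta=0$.

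\textbf{Main obstacle.} Steps 1 and 2 are elementary. The essential difficulty is the transcendence input in Step 3: a lower bound for $|\Lambda|$ that is polynomial in $B$, with a constant uniform in $m$. I would not reprove this but cite it, either Gelfond's 1949 two-logarithm estimate or Baker--W\"ustholz, in the same spirit as the citation of \cite{ferreira2026arithmeticpropertiesargumentsalgebraic}. The one point to check when citing is that the branch of the logarithm used is admissible. This holds because any determination of $\log\xi$ and $\log(-1)$ is allowed, and the constant depends only on the chosen branches and not on $m$ or $\nu$.
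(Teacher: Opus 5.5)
The paper does not prove this lemma. It imports it verbatim from \cite[Theorem~1.1]{ferreira2026arithmeticpropertiesargumentsalgebraic} and uses it as a black box. Your proposal replaces that citation with a short derivation, and the derivation is correct.

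With principal branches, $\Lambda_{m,\nu}=m\log\xi-2\nu\log(-1)=2\pi i(m\alpha-\nu)$ has integer coefficients bounded by $2m$. The hypothesis that $\xi$ is not a root of unity is exactly what gives $\Lambda_{m,\nu}\neq0$. The Baker--W\"ustholz bound is linear in $\log B$ and needs no multiplicative independence, which matters here because $-1$ is itself a root of unity. It yields $\min_\nu|m\alpha-\nu|\ge\gamma m^{-\tau}$, with $\gamma,\tau$ depending only on the degree and height of $\xi$.

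Compared with the bare citation, your route makes the transcendence input explicit. It shows the lemma is a direct corollary of the two-logarithm case of Baker's theory, with $-1$ as the second algebraic number. Running the same computation with $\Lambda=m\log\xi-\nu\log(-1)$ also gives directly the $\theta/\pi$ version that is actually invoked in the proof of Theorem~\ref{thm_assumption_verify}. The citation buys only brevity.

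One correction to your last paragraph: Gelfond's classical two-logarithm estimate is not interchangeable with Baker--W\"ustholz here. Its dependence on the coefficients is of the form $\exp(-c(\log B)^{\kappa})$ with $\kappa>1$. It would therefore only give $\min_\nu|m\alpha-\nu|\ge\exp(-c(\log m)^{\kappa})$. That bound is subexponential, so it would still suffice for the later non-cancellation argument, but it is not the polynomial bound required by Definition~\ref{def:Diophantine}. The linear dependence on $\log B$ that your Step~3 needs comes from the later work of Feldman and Baker, as packaged in Baker--W\"ustholz. Cite that and drop the Gelfond alternative.
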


    \begin{proof}[Proof of Theorem \ref{thm_assumption_verify}]
    
    \begin{enumerate}
        \item $\displaystyle\sigma(x) = \tanh(x+\frac{q\pi}{2})$ for $\displaystyle q\in\QQ\backslash\{0,\pm 1\}$: the singularities of $\tanh(x)$ are $\displaystyle\pm \frac{\pi}{2}i + k\pi i$, $k\in\ZZ$. Therefore the nearest singularities of $\sigma(x)$ to origin are $\displaystyle p_\ast, \overline{p_\ast} = -\frac{q\pi}{2} \pm \frac{\pi}{2}i = \frac{\pi}{2}(-q\pm i)$ and ${\rm Res}(\sigma,p_\ast) = 1$ , which yields 
        $$ e^{i\theta_\ast} = \frac{-q + i}{\sqrt{q^2 + 1}}, \quad \varphi = 0$$
        which is an algebraic number. By Niven's Theorem \cite{Olmsted1945}, $e^{i\theta_\ast}$ is not a root of unity when $q\neq 0,\pm 1$, therefore by Lemma \ref{lem:algebraic-Diophantine}, $\theta_\ast/\pi$ satisfies Diophantine condition with shift $0$, i.e. there exists $\gamma>0,\tau>0$ such that
        \begin{equation}
        \min_{\nu\in\mathbb Z}|m\frac{\theta_\ast}{\pi}-\nu|
        \geq \gamma m^{-\tau},
        \qquad \forall m\in\mathbb N_+.
        \end{equation}
        Therefore we have 
        \begin{equation}
        \begin{split}
            \min_{\nu\in\mathbb Z}|m\frac{\theta_\ast}{\pi}-(\frac{\varphi}{\pi}-\frac{1}{2})-\nu|
            &\geq \frac{1}{2}\min_{\nu\in\mathbb Z}|2m\frac{\theta_\ast}{\pi}+1-2\nu| \\
            &\geq \frac{\gamma}{2} (2m)^{-\tau} = \frac{\gamma}{2^{1+\tau}}m^{-\tau},
        \qquad \forall m\in\mathbb N_+,
        \end{split}
        \end{equation}
        which verifies \eqref{eqn:inhomo-Diophantine}. Therefore it remains to verify that $\sigma^{(k)}(0)\neq 0$ for all $k\in\NN$. We record the following simple consequence of the Gelfond--Schneider theorem. 
        Indeed, we first show that \(\tanh(q\pi/2)\) is transcendental. Choose the branch of the logarithm for which
        \[
        \log(-1)=\pi i.
        \]
        Then
        \[
        e^{q\pi}
        =
        \exp\bigl((-iq)\log(-1)\bigr)
        \]
        is one value of \((-1)^{-iq}\). Since \(-1\) is algebraic and different from \(0,1\), while \(-iq\) is an algebraic irrational number (here complex numbers are not regarded as rational when they have an imaginary part not equal to 0), the Gelfond--Schneider theorem implies that
        \(
        e^{q\pi}
        \)
        is transcendental. On the other hand,
        \[
        \tanh(\frac{q\pi}{2})=\frac{e^{q\pi}-1}{e^{q\pi}+1}.
        \]
        If \(\tanh(\frac{q\pi}{2})\) were algebraic, then
        \[
        e^{q\pi}
        =
        \frac{1+\tanh(\frac{q\pi}{2})}{1-\tanh(\frac{q\pi}{2})}
        \]
        would also be algebraic, a contradiction. Hence \(\tanh(\frac{q\pi}{2})\) is transcendental.
        
        Next, define polynomials \(P_k\in\mathbb{Z}[z]\) by
        \[
        P_0(z)=z,\qquad
        P_{k+1}(z)=(1-z^2)P_k'(z).
        \]
        An induction gives
        \[
        \frac{d^k}{dx^k}\tanh x=P_k(\tanh x).
        \]
        Moreover, each \(P_k\) is a nonzero polynomial; in fact,
        \[
        \deg P_k=k+1
        \]
        and its leading coefficient is \((-1)^k k!\).
        Since \(P_k\) is a nonzero polynomial with integer coefficients and \(\tanh(q\pi/2)\) is transcendental, we have
        \[
        \sigma^{(k)}(0) = P_k(\tanh(\frac{q\pi}{2}))\neq 0.
        \]
        The verification of \eqref{eqn:derivative-bound} for $\sigma(x)$ is then completed.
    \item $\displaystyle\sigma(x) = \frac{1}{1+e^{-x+\pi q}}$ for $q\in\QQ\backslash\{0,\pm 1\}$: it follows from the identity 
    $$
    \sigma(x) = \frac{1}{1+e^{-x+ q\pi}} = \frac{1}{2}+ \frac{1}{2}\tanh\lt(\frac{x-q\pi}{2}\rt).
    $$
    Indeed, the translation in item 1 applies with $-q$, and the additional scaling of the input multiplies the $k$-th Taylor coefficient by $2^{-k}$; this only changes the exponential constant in \eqref{eqn:derivative-bound}.
    \item $\displaystyle\sigma(x) = \frac{c_1 + c_2 x}{1+x^2}$ for $c_1,c_2\neq 0$: verification of \eqref{eqn:derivative-bound} can be done by directly calculating its Taylor expansion,
        $$ 
        \sigma(x) = \sum_{\ell=0}^\infty (-1)^\ell (c_1 x^{2\ell} + c_2 x^{2\ell+1}).
        $$
    \item $\displaystyle\sigma(x) = \frac{1}{1+(x+q)^2}$ for $q\in\QQ\backslash\{0,\pm 1\}$: since 
        $$
        \sigma(x) = \frac{1}{2i}\lt( \frac{1}{x+q-i}-\frac{1}{x+q+i} \rt),
        $$
        we obtain the only singularities of $\sigma$ are $$p_\ast, \overline{p_\ast} = -q \pm i = \rho_\ast e^{\pm i\theta_\ast},$$
        where $\rho_\ast = \sqrt{q^2 + 1}$, $e^{i\theta_\ast} = \displaystyle\frac{-q + i}{\sqrt{q^2 + 1}}$,
        $${\rm Res}(\sigma,p_\ast) = -\frac{i}{2} = c_0 e^{i\varphi},$$
        where $c_0 = \frac{1}{2}$ and $\varphi = -\frac{\pi}{2}$. 
        By Cauchy's integral formula and residue theorem, we have
        $$\frac{1}{k!}\sigma^{(k)}(0) = \rho_\ast^{-(k+1)}\sin((k+1)\theta_\ast).$$
        From the verification of $\tanh(x+\frac{q\pi}{2})$, the Diophantine property of $\theta_\ast/\pi$ has been illustrated, and note that there is no low order term $O(R^{-k})$ here, therefore \eqref{eqn:derivative-bound} in this case is proved. 
    \item $\displaystyle\sigma(x) = \arctan(x+q)$ for $q\in\QQ\backslash\{0,\pm 1\}$: use
    $$ \arctan(x+q) = \arctan q + \int_0^x \frac{1}{1+(t+q)^2}\,dt.$$
    Thus, for $k\geq1$, its $k$-th Taylor coefficient is the $(k-1)$-st Taylor coefficient from item 4 divided by $k$. The factor $k^{-1}$ is absorbed by replacing the exponential radius with any slightly larger one; the zeroth coefficient is nonzero because $q\neq0$.
    \end{enumerate}
    \end{proof}

\section{Conclusion}\label{sec:conclusion}
We have reduced the construction of sharp linearized shallow-network spaces on general domains to a one-dimensional approximation problem. Quasi-Chebyshev parameter sets with resolution $m$ yield the sharp one-dimensional Sobolev order for the analytic activations covered by \eqref{eqn:derivative-bound}, and a rational change of variables gives the corresponding result for arbitrary quasi-Chebyshev translates of $\tanh$. The lifting theorem in \cite{petrushev1998approximation}, in the quasi-uniform form proved in Appendix~\ref{app:quasiuniform-lifting}, then produces the optimal multidimensional $L^2$ order for product-type parameter sets. With $n=m^d$, these spaces contain at most order $n$ features and attain the rate $m^{-r}=n^{-r/d}$. The direct construction of tensorized quasi-Chebyshev parameter set supplies the analogous $H^r$-to-$H^s$ estimate for the general analytic class. The fixed-interval parameter geometry avoids the factorial-scale clustering present in the finite-difference construction in \cite{mhaskar1996neural}; quantitative conditioning estimates for the resulting feature matrices remain an important question.

\section{Appendix: Ridge lifting with quasi-uniform directions}\label{app:quasiuniform-lifting}

This appendix explains why the cubature points used in \cite{petrushev1998approximation} can be replaced by an arbitrary quasi-uniform family of directions. We verify only the two properties of the directions required in the lifting argument. Once these properties are established, the remaining approximation argument follows directly from \cite[Sections~3, 7, and~8]{petrushev1998approximation}.

Let $d\geq2$, and let $\Omega_m\subset\SS^{d-1}$ be an arbitrary quasi-uniform family satisfying
$$
|\Omega_m|\simeq m^{d-1}.
$$
Its mesh norm and separation are both comparable to $m^{-1}$, with constants independent of $m$. We denote by $\mathbb P_L(\SS^{d-1})$ the space of spherical polynomials of degree at most $L$.

\begin{lemma}\label{lem:quasi-uniform-cubature-lifting}
There exists a constant $a>0$ such that, for every $m$, there are positive weights $\lambda_{\omega,m}$, $\omega\in\Omega_m$, satisfying
\begin{equation}\label{eqn:quasi-uniform-cubature-lifting}
\int_{\SS^{d-1}}S(\xi)\,d\xi
=
\sum_{\omega\in\Omega_m}\lambda_{\omega,m}S(\omega),
\qquad
S\in\mathbb P_{\lfloor am\rfloor}(\SS^{d-1}),
\end{equation}
and
\begin{equation}\label{eqn:quasi-uniform-weight-lifting}
\lambda_{\omega,m}\simeq m^{-(d-1)},
\qquad
\omega\in\Omega_m.
\end{equation}
The constants may depend on $d$ and on the uniform bound for the mesh ratios, but not on $m$ or $\omega$.
\end{lemma}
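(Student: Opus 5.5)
The plan is to obtain the weights from the positive-cubature theory for scattered data on spheres and then upgrade the upper bound on the weights to a two-sided bound by exploiting the separation of $\Omega_m$. First, since $\Omega_m$ is quasi-uniform with $|\Omega_m|\simeq m^{d-1}$, a volume-counting argument shows that its mesh norm $h_m$ and separation $q_m$ are both comparable to $m^{-1}$. Indeed, the caps of radius $q_m/2$ are disjoint, which gives $|\Omega_m|q_m^{d-1}\lesssim1$. The caps of radius $h_m$ cover the sphere, which gives $|\Omega_m|h_m^{d-1}\gtrsim1$. Combined with $h_m\lesssim q_m$, this yields $h_m\simeq q_m\simeq m^{-1}$. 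We then apply the positive quadrature result of \cite{narcowich2006localized}, equivalently \cite{mhaskar2001spherical}, which underlies Lemma~\ref{lem:MZ-inequ}. It states that for scattered points with mesh norm $h$ there exist nonnegative weights, exact on $\mathbb P_L(\SS^{d-1})$ with $L=\lfloor c h^{-1}\rfloor$, and satisfying $\lambda_\omega\lesssim h^{d-1}$. Choosing $a$ small relative to $c$ and the comparability constant gives exactness \eqref{eqn:quasi-uniform-cubature-lifting} on $\mathbb P_{\lfloor am\rfloor}$, together with the upper bound $\lambda_{\omega,m}\lesssim m^{-(d-1)}$.

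The main obstacle is the lower bound $\lambda_{\omega,m}\gtrsim m^{-(d-1)}$, because the cited constructions only guarantee nonnegativity. For this I would use a localized polynomial test function. Fix $\omega_0\in\Omega_m$ and let $S=\Phi_L(\omega_0\cdot\circ)^2$, where $\Phi_L$ is a needlet-type kernel of degree $L/2$. Its square has degree $L=\lfloor am\rfloor$, it satisfies $\Phi_L(1)\simeq L^{d-1}$, and it has the decay $|\Phi_L(\omega_0\cdot\xi)|\lesssim L^{d-1}(1+L\rho(\omega_0,\xi))^{-K}$ for large $K$. By exactness, $\int S\simeq L^{d-1}$. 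Splitting the cubature sum gives $L^{d-1}\simeq\lambda_{\omega_0}S(\omega_0)+\sum_{\omega\ne\omega_0}\lambda_\omega S(\omega)$, with $S(\omega_0)\simeq L^{2(d-1)}$. The tail can be estimated by grouping the remaining points into shells $\rho(\omega,\omega_0)\in[kq_m,(k+1)q_m)$, each containing $\lesssim k^{d-2}$ points. Using $\lambda_\omega\lesssim m^{-(d-1)}$, the tail is bounded by $C m^{-(d-1)}L^{2(d-1)}\sum_{k\ge1}k^{d-2}(1+kLq_m)^{-2K}$.

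The delicate point is making this tail a small fraction of $L^{d-1}$. Here I would exploit the freedom in $a$: taking $Lq_m\simeq a$ small does not help directly. So instead I would let the kernel degree $L'$ be a large multiple of the separation scale, i.e.\ $L'q_m\ge A$ with $A$ large, while keeping exactness at degree $2L'$. This requires replacing the exactness degree by a larger one, which is allowed because exactness at degree $L'$ only needs $L'\le c h_m^{-1}$. Since $h_m\simeq q_m$, there is some tension between the two requirements. If this tension cannot be resolved, I would fall back on the quadrature construction of \cite{narcowich2006localized} via norming sets, whose weights are built from a Voronoi-type partition and are comparable to cell areas. In that construction one gets $\lambda_\omega\simeq|R_\omega|\simeq q_m^{d-1}$ directly, because each Voronoi cell contains a cap of radius $q_m/2$ and is contained in a cap of radius $h_m$. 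The lower bound then follows from cell area comparability, yielding \eqref{eqn:quasi-uniform-weight-lifting}.
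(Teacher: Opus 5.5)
Your fallback is exactly the paper's proof. The paper invokes \cite[Corollary~4.4]{narcowich2006localized}, which, for a point set with mesh norm $\simeq m^{-1}$ and bounded mesh ratio, directly gives positive weights exact on $\mathbb P_{\lfloor am\rfloor}(\SS^{d-1})$. The lower bound on these weights is $\gtrsim h_m^{d-1}$ and the upper bound is $\lesssim L^{-(d-1)}$, both $\simeq m^{-(d-1)}$. Your volume-counting step usefully supplies the comparability $h_m\simeq q_m\simeq m^{-1}$, which the paper takes for granted.

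You should drop the localized-kernel detour rather than try to repair it. Exactness, nonnegativity and the upper bound can never force a lower bound. Once $a$ is small, extend by zero a positive cubature built on a maximal $3h_m$-separated subset of $\Omega_m$; that subset has mesh norm $\le 4h_m$. The resulting weights are admissible in every respect you use, yet vanish at some nodes. So the lower bound has to come from the specific construction, as in your fallback.
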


\begin{proof}
Since the mesh norm of $\Omega_m$ is comparable to $m^{-1}$ and its mesh ratio is uniformly bounded, \cite[Corollary~4.4]{narcowich2006localized} applies with $L=\lfloor am\rfloor$ once $a>0$ is chosen sufficiently small. It gives a positive cubature formula exact on $\mathbb P_L(\SS^{d-1})$. Moreover, the lower bound for the weights is comparable to the $(d-1)$st power of the mesh norm, while the upper bound is comparable to $L^{-(d-1)}$. This proves \eqref{eqn:quasi-uniform-weight-lifting}. The corresponding statement on $\SS^1$ is the classical positive trigonometric quadrature result; see also \cite{mhaskar2001spherical}. A related Marcinkiewicz--Zygmund formulation for arbitrary scattered points is given in \cite[Theorem~2.1]{leGiaMhaskar2009localized}.
\end{proof}

The cubature identity is sufficient for the low-frequency part of the argument. For the high-frequency part, one also needs the discrete synthesis estimate corresponding to \cite[Lemmas~4.2--4.4]{petrushev1998approximation}. We first record the required kernel bound. For every $\ell\geq1$ and $\eta\in\SS^{d-1}$,
\begin{equation}\label{eqn:quasi-uniform-kernel-sum}
\sum_{\omega\in\Omega_m}
\lambda_{\omega,m}\ell^{d-1}
\bigl(1+\ell\rho(\omega,\eta)\bigr)^{-d}
\lesssim
1+\left(\frac{\ell}{m}\right)^{d-1}.
\end{equation}
Indeed, divide the sphere into the cap $\rho(\omega,\eta)<m^{-1}$ and the annuli
$$
\frac{j}{m}\leq\rho(\omega,\eta)<\frac{j+1}{m},
\qquad j\geq1.
$$
The separation of $\Omega_m$ implies that the first cap contains at most a constant number of points and that the $j$th annulus contains at most $C(j+1)^{d-2}$ points. Using \eqref{eqn:quasi-uniform-weight-lifting}, the left-hand side of \eqref{eqn:quasi-uniform-kernel-sum} is therefore bounded by
$$
C\left(\frac{\ell}{m}\right)^{d-1}
+
C\left(\frac{\ell}{m}\right)^{d-1}
\sum_{j=1}^{\infty}
j^{d-2}
\left(1+\frac{j\ell}{m}\right)^{-d}.
$$
Splitting the sum at $j=m/\ell$ proves \eqref{eqn:quasi-uniform-kernel-sum}.

After a harmless fixed rescaling of $\ell$, let $W_\ell$ be a localized kernel reproducing $\mathbb P_\ell(\SS^{d-1})$ as in \cite[Proposition~4.1]{petrushev1998approximation}. Its localization estimate is
$$
|W_\ell(\xi\cdot\eta)|
\lesssim
\ell^{d-1}\bigl(1+\ell\rho(\xi,\eta)\bigr)^{-d}.
$$
The reproducing identity, the Cauchy--Schwarz inequality, and \eqref{eqn:quasi-uniform-kernel-sum} give
\begin{equation}\label{eqn:quasi-uniform-mz-lifting}
\sum_{\omega\in\Omega_m}
\lambda_{\omega,m}|S(\omega)|^2
\lesssim
\left[1+\left(\frac{\ell}{m}\right)^{d-1}\right]
\|S\|_{L^2(\SS^{d-1})}^2,
\qquad
S\in\mathbb P_\ell(\SS^{d-1}).
\end{equation}
The spherical reproducing identity and another application of the Cauchy--Schwarz inequality then give the synthesis estimate in \cite[Lemma~4.4]{petrushev1998approximation}. Thus both the low-frequency cubature identities and the high-frequency stability estimate remain valid for arbitrary quasi-uniform directions.

\begin{theorem}[Quasi-uniform ridge lifting]\label{thm:quasi-uniform-ridge-lifting}
Let $\Omega\subset\RR^d$ be a bounded domain satisfying the hypotheses of \cite[Theorem~8.2]{petrushev1998approximation}, and let $I\subset\RR$ be the corresponding fixed projection interval. Suppose that $q>0$ and that the univariate spaces $X_m$, with $\dim X_m\lesssim m$, satisfy
\begin{equation}\label{eqn:univariate-assumption-appendix}
\inf_{v\in X_m}\|u-v\|_{L^2(I)}
\lesssim
m^{-q}\|u\|_{H^q(I)}.
\end{equation}
For any quasi-uniform family $\Omega_m\subset\SS^{d-1}$ satisfying $|\Omega_m|\simeq m^{d-1}$, define
$$
Y_m
:=
\operatorname{span}
\{v(\omega\cdot\circ):v\in X_m,\ \omega\in\Omega_m\}.
$$
Then, with
$$
r=q+\frac{d-1}{2},
$$
we have
\begin{equation}\label{eqn:quasi-uniform-lifting-rate}
\inf_{g\in Y_m}\|f-g\|_{L^2(\Omega)}
\lesssim
m^{-r}\|f\|_{H^r(\Omega)},
\qquad
f\in H^r(\Omega).
\end{equation}
Moreover, setting $n=m^d$,
$$
\dim Y_m\lesssim m^d=n.
$$
\end{theorem}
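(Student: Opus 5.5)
The plan is to follow Petrushev's lifting argument \cite[Sections~3, 7, 8]{petrushev1998approximation} essentially verbatim. The only changes are where his particular cubature nodes enter, and there we substitute Lemma~\ref{lem:quasi-uniform-cubature-lifting} and the synthesis bound \eqref{eqn:quasi-uniform-mz-lifting}.

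\textbf{Step 1: reduce to a ball.} By the hypotheses of \cite[Theorem~8.2]{petrushev1998approximation}, I first extend $f$ to a function supported in a fixed ball $B$ whose projections $\omega\cdot B$ lie in $I$, keeping $\|f\|_{H^r}$ comparable.

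\textbf{Step 2: frequency decomposition.} I expand $f$ in orthogonal polynomials on $B$ and split into dyadic blocks $f=\sum_k f_k$, with $f_k$ of degree in $[2^{k-1},2^k)$. Each block obeys $\|f_k\|_{L^2}\lesssim 2^{-kr}\|f\|_{H^r}$, and the tail with $2^k\geq cm$ is discarded at cost $m^{-r}\|f\|_{H^r}$.

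\textbf{Step 3: exact ridge representation of the low blocks.} For a polynomial $P$ of degree at most $\ell\leq am/2$, I use Petrushev's identity
\[
P(x)=\int_{\SS^{d-1}} (\mathcal R_\xi P)(\xi\cdot x)\,d\xi ,
\]
where $\mathcal R_\xi P$ is a univariate function built from Radon projections. The integrand is a spherical polynomial in $\xi$ of degree at most $2\ell$. Replacing the integral by the positive cubature of Lemma~\ref{lem:quasi-uniform-cubature-lifting} therefore gives
\[
P(x)=\sum_{\omega\in\Omega_m}\lambda_{\omega,m}\,u_\omega(\omega\cdot x)
\]
exactly, with each $u_\omega\in L^2(I)$.

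\textbf{Step 4: approximate the univariate profiles.} I approximate each $u_\omega$ by an element of $X_m$ using \eqref{eqn:univariate-assumption-appendix}. The total error is
\[
\sum_\omega\lambda_{\omega,m}\,m^{-q}\|u_\omega\|_{H^q(I)}.
\]
To control it, I bound $\sum_\omega\lambda_{\omega,m}\|u_\omega\|_{H^q}^2$ by $\|f\|_{H^r}^2$. The half-derivative gain $(d-1)/2$ comes from the Radon-transform smoothing identity: $\|\mathcal R_\xi f\|$ in $H^{q}$, integrated over $\xi$, is comparable to $\|f\|_{H^{q+(d-1)/2}}$. This is where $r=q+\tfrac{d-1}{2}$ arises.

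\textbf{Step 5: pass from the integral to the discrete sum (main obstacle).} The discretized sum over $\Omega_m$ must reproduce the integral bound from Step 4. For the high-frequency blocks Petrushev's proof uses his Lemmas~4.2--4.4, which is why the stability estimate \eqref{eqn:quasi-uniform-mz-lifting} is needed. With $\ell\lesssim m$, the factor $1+(\ell/m)^{d-1}$ is bounded, so the weighted discrete $\ell^2$ norm is dominated by the $L^2(\SS^{d-1})$ norm.

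\textbf{Step 6: assemble and count.} Cauchy--Schwarz with $\sum_\omega\lambda_{\omega,m}\simeq1$ then gives \eqref{eqn:quasi-uniform-lifting-rate}. The dimension count is immediate:
\[
\dim Y_m\leq|\Omega_m|\dim X_m\lesssim m^{d-1}\cdot m=m^d.
\]

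I expect Step 5 to be the delicate point. One must check that every place Petrushev uses exactness or the node structure of his cubature only requires positive weights, exactness up to degree $\simeq m$, and the kernel-sum bound \eqref{eqn:quasi-uniform-kernel-sum}. My first attempt is to track his proof line by line, replacing each such invocation by Lemma~\ref{lem:quasi-uniform-cubature-lifting} or \eqref{eqn:quasi-uniform-mz-lifting}, with all constants depending only on $d$ and the mesh ratio.
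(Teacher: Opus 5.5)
Your plan (run Petrushev's argument and swap in Lemma~\ref{lem:quasi-uniform-cubature-lifting} and \eqref{eqn:quasi-uniform-mz-lifting} wherever the directions enter) is the paper's plan. However, your explicit accounting of the rate is wrong, and as assembled it gives only $m^{-q}\|f\|_{H^r}$. In Steps~4 and~6 you use the triangle inequality to bound the error by $\sum_\omega\lambda_{\omega,m}m^{-q}\|u_\omega\|_{H^q(I)}$. You then use $\sum_\omega\lambda_{\omega,m}\|u_\omega\|_{H^q(I)}^2\lesssim\|f\|_{H^r}^2$ and Cauchy--Schwarz with $\sum_\omega\lambda_{\omega,m}\simeq1$. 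This yields $m^{-q}\|f\|_{H^{q+(d-1)/2}}$, which is a factor $m^{(d-1)/2}$ short of \eqref{eqn:quasi-uniform-lifting-rate}. The smoothing identity you cite holds for the ridge profiles (the filtered Radon projections). It only says that profiles of an $H^{q+(d-1)/2}$ function lie in $H^q$. So it explains why smoothness $r$ is required, but not where the extra decay $m^{-(d-1)/2}$ comes from.

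That decay comes from near-orthogonality of high-degree ridge functions along separated directions, and producing it is exactly the role of \eqref{eqn:quasi-uniform-mz-lifting}. For Gegenbauer profiles $C_\ell$ of degree $\ell$ on the ball $B$, duality turns it into
\[
\Bigl\|\sum_{\omega}\lambda_{\omega,m}a_\omega C_\ell(\omega\cdot\circ)\Bigr\|_{L^2(B)}^2\lesssim\bigl(\ell^{-(d-1)}+m^{-(d-1)}\bigr)\sum_\omega\lambda_{\omega,m}\|a_\omega C_\ell(\omega\cdot\circ)\|_{L^2(B)}^2 .
\]
Here the factor $1+(\ell/m)^{d-1}$ multiplies the $\ell^{-(d-1)}$ normalization of a single ridge function. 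Apply this to the components of degree $\gtrsim m$ of the univariate errors $u_\omega-v_\omega$, rather than summing them directionwise. This gives $m^{-(d-1)/2}\cdot m^{-q}\|f\|_{H^r}=m^{-r}\|f\|_{H^r}$. That is precisely the regime $\ell\gtrsim m$ your Step~5 sets aside. In the regime $\ell\le am/2$ where you do invoke the estimate, it is vacuous: $|S|^2\in\mathbb P_{2\ell}$ is integrated exactly by the cubature, so $\sum_\omega\lambda_{\omega,m}|S(\omega)|^2=\|S\|^2$. Note also that after Step~2 $f$ has no high-frequency blocks left. The high frequencies that matter are those of the univariate approximation errors, not of $f$. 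So the directionwise triangle inequality must be replaced by this synthesis bound. The remaining bookkeeping, including the error components of degree below $m$ where the bound gives only $\ell^{-(d-1)}$, should be taken from the proof of \cite[Theorem~7.1]{petrushev1998approximation}, as the paper does.
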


\begin{proof}
We only describe the modification concerning the directions. Choose $M=\lfloor cm\rfloor$, where $c>0$ is sufficiently small that
$$
2M\leq\lfloor am\rfloor.
$$
By \eqref{eqn:quasi-uniform-cubature-lifting}, the cubature formula is exact for all products of spherical polynomials occurring in the ridge decomposition through degree $M$. Consequently, the discrete representation and Parseval identities used in \cite[(4.22)--(4.23)]{petrushev1998approximation} remain valid.

The directions enter the proof of \cite[Theorem~7.1]{petrushev1998approximation} only through these cubature identities and the synthesis estimate corresponding to \cite[Lemmas~4.2--4.4]{petrushev1998approximation}. Replacing those ingredients by \eqref{eqn:quasi-uniform-cubature-lifting} and \eqref{eqn:quasi-uniform-mz-lifting}, respectively, leaves the rest of the proof unchanged. Since $M\simeq m$, the argument gives
$$
\inf_{g\in Y_m}\|f-g\|_{L^2(\Omega)}
\lesssim
M^{-q-\frac{d-1}{2}}
\|f\|_{H^{q+\frac{d-1}{2}}(\Omega)}
\lesssim
m^{-r}\|f\|_{H^r(\Omega)}.
$$
The passage from the unweighted estimate \eqref{eqn:univariate-assumption-appendix} to the corresponding weighted univariate approximation, as well as the passage to the general domain $\Omega$, is exactly the argument in \cite[Section~8 and Theorem~8.2]{petrushev1998approximation} and is independent of the particular choice of directions. Finally,
$$
\dim Y_m
\leq
(\dim X_m)|\Omega_m|
\lesssim
m\,m^{d-1}
=
m^d=n.
$$
\end{proof}

\bibliographystyle{abbrv}
	\bibliography{ref}
\end{document}